\definecolor{brown}{cmyk}{0, 0.72, 1, 0.45}
\definecolor{grey}{gray}{0.5}
\def\red{ \color{red}}
\newcounter{rot}
\newcommand{\bignote}[1]{\fbox{\parbox{6in}{\red #1}}%
\marginpar{\fbox{{\bf {\large Q \therot}}}}\addtocounter{rot}{1}}
\def\cN{{\cal N}}
\def\bcN{\overline{\cN}}
\def\cM{{\cal M}}
\def\a{\alpha} \def\b{\beta} \def\d{\delta} 
\def\e{\epsilon} \def\f{\phi} \def\F{{\Phi}}  \def\g{\gamma}
\def\G{\Gamma}  \def\k{\kappa}
 \def\th{\theta}  \def\Th{\Theta}  \def\l{\lambda}
   \def\p{\pi}
\def\r{\rho}  \def\s{\sigma} 
\def\t{\tau} \def\om{\omega}  \def\Om{\Omega}
\def\cG{{\cal G}}
\def\cB{{\cal B}}
\def\cT{{\cal T}}
\def\cS{{\cal S}}
\def\cR{{\cal R}}
\def\cH{{\cal H}}
\def\cU{{\cal U}}
\def\ve{\varepsilon}
\newtheorem{theorem}{Theorem}
\newtheorem{lemma}[theorem]{Lemma}
\newtheorem{corollary}[theorem]{Corollary}
\newcommand{\proofstart}{{\bf Proof\hspace{2em}}}
\newcommand{\proofend}{\hspace*{\fill}\mbox{$\Box$}}
\def\cX{{\cal X}}
\newcommand{\ooi}{(1+o(1))}
\newcommand{\sooi}{\mbox{${\scriptstyle (1+o(1))}$}}
\newcommand{\ul}[1]{\mbox{\boldmath$#1$}}
\newcommand{\ol}[1]{\overline{#1}}
\newcommand{\wh}[1]{\widehat{#1}}
\newcommand{\wt}[1]{\widetilde{#1}}
\newcommand{\rdown}[1]{{\left\lfloor #1\right \rfloor}}
\newcommand{\brac}[1]{\left(#1\right)}
\newcommand{\bfrac}[2]{\left(\frac{#1}{#2}\right)}
\def\cE{{\cal E}}
\newcommand{\rai}{\rightarrow \infty}
\newcommand{\ra}{\rightarrow}
\newcommand{\rat}{{\textstyle \ra}}
\newcommand{\set}[1]{\left\{#1\right\}}
\def\sm{\setminus}
\def\seq{\subseteq}
\def\es{\emptyset}
\def\E{\mbox{{\bf E}}}
\def\Var{\mbox{{\bf Var}}}
\def\Pr{\mbox{{\bf Pr}}}
\def\cF{{\cal F}}
\newcommand{\ignore}[1]{}
\newcommand{\cA}{{\cal A}}
\newcommand{\card}[1]{\left|#1\right|}
\newcommand{\beq}[1]{\begin{equation}\label{#1}}
\newcommand{\eeq}{\end{equation}}
\def\cN{{\cal N}}
\def\dist{\;\text{dist}}
\def\ole{o(1)}
\def\bd{{\bf d}}
\def\cH{{\cal H}}
\def\Tc{T_{\mbox{cov}}}
\begin{document}

\makeatletter
\title{Vacant sets and vacant nets: Component structures
   induced by a random walk.}
\author{Colin Cooper\thanks{Department of  Informatics,
King's College, University of London, London WC2R 2LS, UK.
Research supported in part by EPSRC grants EP/J006300/1 and EP/M005038/1.}
\and Alan
Frieze\thanks{Department of Mathematical Sciences, Carnegie Mellon
University, Pittsburgh PA 15213, USA.
Research supported in part by NSF grant DMS0753472.
}
}
\maketitle \makeatother
\begin{abstract}

Given a discrete random walk on a finite graph $G$,
the vacant set and vacant net are, respectively, the sets of vertices and  edges
which remain unvisited by the walk at a given  step $t$.
Let $\G(t)$ be the subgraph of $G$ induced by the vacant set of the walk
 at step $t$. Similarly, let $\wh \G(t)$ be the  subgraph of $G$
induced by the  edges of the vacant net.

For random
$r$-regular graphs $G_r$, it was previously established that
for a simple random walk,
the graph $\G(t)$ of the vacant set undergoes
 a phase transition in the
sense of the
phase transition on Erd\H{os}-Renyi graphs $G_{n,p}$.
Thus, for $r \ge 3$  there is an explicit value $t^*=t^*(r)$ of the walk, such that
for $t\leq (1-\e)t^*$, $\G(t)$ has a unique giant component, plus
components of  size $O(\log n)$, whereas  for
$t\geq (1+\e)t^*$ all the components of $\G(t)$ are of size $O(\log n)$.

In this paper we  establish the  threshold value  $\wh t$ for a
phase transition in the
graph $\wh \G(t)$
of the vacant net of a simple random walk on a random $r$-regular graph,.

We   obtain the corresponding threshold results
 for the vacant set  and  vacant net of
two modified random walks.
These are  a non-backtracking random walk, and, for $r$ even, a random walk which
chooses unvisited edges whenever available.

This allows a direct comparison of thresholds between
 simple and modified
walks on random $r$-regular
graphs. The main findings are the following: As $r$ increases
the threshold for the vacant set converges to $n \log r$ in all three walks.
 For the vacant net,  the threshold  converges to $rn/2 \; \log n$ for both the simple random walk and non-backtracking random walk.
When $r\ge 4$ is even,
the threshold for the vacant net
of the unvisited edge process  converges to $rn/2$, which is also the vertex cover time of the process.
\end{abstract}

\section{Introduction}

Let $G=(V,E)$ be a finite connected graph, with vertex set size $|V|=n$, and edge set size $|E|=m$.
Let $W$ be a simple random walk on $G$, with initial position $X(0)$ at $t=0$.
At discrete steps $t=1,2,\cdots$, the walk chooses  $X(t)$
uniformly at random (u.a.r.) from the neighbours of $X(t-1)$ and makes the edge
transition $(X(t-1), X(t))$.
Let $W(t)=(X(0),...,X(t))$ be the trajectory of the walk
up to and including step $t$, and let $\cB(t)=\set{X(s): s \le t}$
be the set of vertices visited in $W(t)$.
By analogy with site percolation, the set of unvisited vertices $\cR(t)=V \sm \cB(t)$ is
referred to as the {\em  vacant set} of the walk. The graph induced by the uncrossed edges is referred to as the {\em vacant net}.

In the case of random $r$-regular graphs, it was established independently by \cite{CTD} and \cite{CFVac}
that the graph induced by the set of unvisited vertices exhibits sharp threshold behavior. Typically,
as the walk proceeds,  the induced graph of the vacant set has a unique giant component, which  collapses within a relatively small
number of steps to leave components of at most logarithmic size.
For random $r$-regular graphs,  we establish the threshold behavior
of the  vacant net, i.e. the subgraph induced by the set of unvisited edges of the random walk.
For comparison purposes, and ignoring terms of order $1/r$, the thresholds for the vacant set and vacant net
occur around steps $n \log r$ and $(r/2) n \log r$
 of the walk, respectively.

For $v\in V$ let $C_v$ be the expected time taken for a
 random walk $W_v$ starting at vertex $X(0)=v$, to visit every vertex of the graph
$G$.  The {\em vertex cover time} $\Tc^V(G)$ of a graph $G$ is
defined as $\Tc^V(G)=\max_{v\in V}C_v$.
Let $N(t)= |\cR(t)|$ be the size of the vacant set at step $t$ of the walk.
As the walk $W_v(t)$ proceeds, the size of the
vacant set decreases from $N(0)=n$ to $N(t)=0$ at expected time $C_v$.
The change in structure of the graph $\G(t)=G[\cR(t)]$ induced by the vacant set
$\cR(t)$ is also of interest, insomuch as it is
reasonable to ask if $\G(t)$ evolves in a typical way for most walks $W(t)$.
Perhaps surprisingly the component structure of the vacant set can be described in
detail for certain types of  random graphs, and also to some extent for toroidal grids of dimension
at least 5.

To motivate this description of the component structure,
we recall the typical evolution of the random graph
$G_{n,p}$ as $p$ increases from 0 to 1. Initially, at $p=0$, $G_{n,0}$ consists of isolated vertices.
As we increase $p$, we find that for $p=c/n$, when $c<1$ the maximum
component size is
logarithmic. This is followed by a phase transition around the critical value $c=1$.
When $c>1$ the  maximum
component size is linear in $n$, and all other components have logarithmic
size.

In describing the evolution of the structure of the vacant set as $t$ increases, the aim is to show
that typically
 $\G(t)$ undergoes a reversal of the phase transition mentioned above. Thus $\G(0)$
is connected and $\G(t)$  starts to break up as $t$ increases.
There is a critical value $t^*$ such that if $t<t^*$ by a sufficient
amount then $\G(t)$ consists of a unique giant
component plus components of size $O(\log n)$. Once we
 pass through the critical value
 by a sufficient amount, so that $t>t^*$,
then all components are of size $O(\log n)$. As $t$ increases further, the maximum component
size  shrinks to zero.
We make the following definitions. A graph with vertex set $V_1$ is {\em sub-critical} if its maximum
component size is $O(\log n)$, and
{\em super-critical} if is has a {\em unique}
component $C_1(t)$ of size $\Omega(|V_1(t)|)$, where $|V_1(t)|\gg\log n$,
and
all other components are of size $O(\log n)$.

For the case of random $r$-regular graphs $G_r$, the vacant set
was studied independently by \v{C}erny, Teixeira and Windisch \cite{CTD}
and by Cooper and Frieze \cite{CFVac}. Both
\cite{CTD} and \cite{CFVac} proved that w.h.p. $\G(t)$ is sub-critical for $t\geq (1+\e)t^*$
and that there is a unique linear size component
for $t\leq (1-\e)t^*$. The paper \cite{CTD} conjectured that $\G(t)$ is super-critical
for $t\leq (1-\e)t^*$, and this was confirmed by \cite{CFVac}
who also gave the detailed structure of the small ($O(\log n))$ tree components as a function of $t$.
Subsequent to this  \v{C}erny and
Teixeira \cite{CT2}  used the
methods of \cite{CFVac} to give a sharper analysis of $\G(t)$ in the critical window around $t^*$.
The  paper
\cite{CFVac}, also established the critical value $t^*$
for connected random graphs $G_{n,p}$ and for strongly connected
random digraphs $D_{n,p}$.

For the case of toroidal grids, the situation is less clear.
Benjamini and Sznitman \cite{BS} and  Windisch \cite{DW}
investigated the structure of the vacant set of a random walk
on a $d$-dimensional torus. The main focus of this work is to apply the method of
random interlacements. For toroidal grids of dimension $d \ge 5$, it is shown that
there is a value $t^+(d)$,  linear in $n$, above which the vacant set is sub-critical, and a value of
$t^-(d)$ below which the graph is super-critical. It is believed that there is
 a phase transition for $d \ge 3$.
A recent monograph by \v{C}erny and Teixeira \cite{CT}
summarizes the random interlacement methodology. The monograph also gives details for the
vacant set of random $r$-regular graphs.

Let $\cS(t)=\set{ (X(s),X(s+1)): 0 \le s <t}$ be the set of {\em visited edges}
 based on transitions of the walk $W$ up to and including step $t$, and let
  $\cU(t)=E(G) \sm  \cS(t)$ be corresponding the set of {\em unvisited edges}.
  The {\em edge cover time} $\Tc^E(G)$ of a graph $G$ is defined in a similar way
  to the vertex cover time.
The edge set $\cU(t)$ defines an edge induced subgraph $\wh \G(t)$ of $G$
whose vertices may be either visited or unvisited. By analogy with the case
for vertices we will call $\wh\G(t)$ the {\em vacant network} or {\em vacant net} for short.
We can ask the same questions about the phase transition $\wh t$ for the vacant net,
as were asked for the phase transition $t^*$ of the vacant set.

Random walk based crawling is a simple method to search large networks, and  a giant component in the
vacant set can indicate the existence of a large corpus of information which has somehow been missed.
Similarly, a giant component in the vacant net indicates the
continuing existence of a large communications network or set of
unexplored relationships.
From this point of view, any way to speed up
 the collapse of the giant component
can be seen as worthwhile.
One  method, which seems attractive at first sight,
is to prevent the walk from backtracking
over the edge it has just used. Another simple method is to walk randomly but choose
unvisited edges when available.

We determine the thresholds for simple random walks and non-backtracking random walks; and also for  walks which prefer unvisited edges for the case that the vertex degree $r$ is even.
This allows a direct comparison of performance between
these three types of random walk.
Detailed definitions and results for simple random walks, non-backtracking walks, and walks which prefer unvisited edges are given in Sections \ref{simpl}, \ref{no-o-bak} and \ref{unviz}
respectively.

As an example, for random 3-regular graphs, using a non-backtracking walk
reduces the threshold value by a factor of 2 for vacant sets, and by 5/2 for
 vacant nets respectively.
Thus for very sparse graphs, improvements can be obtained by making the walk non-backtracking.
However, the improvement gained by a non-backtracking walk
is of order $1+O(1/r)$, and soon becomes insignificant as $r$
increases. In fact, for all three walks, the threshold value for the vacant set tends to $n \log r$. For the vacant net, the threshold value tends to $nr/2 \log r$ for simple and non-backtracking walks.
For walks which prefer unvisited edges the threshold for the vacant net tends to $nr/2$, but the results only hold for $r$ even.

As a by-product of the proofs in this paper we
give an asymptotic value of  $(r/2)n$ for the vertex  cover time
of the unvisited edge process for $r$ even. This confirms the
order of magnitude estimate $\Th(n)$ and the constant $r/2$ in the
experimental results of \cite{BCF}. The plot of experiments is reproduced in Section \ref{expt} of the Appendix. Note that the plot uses the notation $d$ for vertex degree (rather than $r$).
It can be seen from the figure that the vertex cover time of the  unvisited edge process  exhibit a dichotomy whereby for odd vertex degree,  the vertex cover time appears to be $\Th(n \log n)$.

{\bf Notation.}\\
Apart from $O(\cdot),o(\cdot),\Om(\cdot)$  as a  function of $n \rai$, where $n=|V|$,
we use the following notation.
We say $A_n\ll B_n$ or $B_n\gg A_n$ if $A_n/B_n\to 0$ as
$n\to\infty$, and $A_n \sim B_n$ if $\lim_{ n \rai} A_n/B_n=1$.
The notation $\om(n)$ describes a function tending to infinity as $n \rai$.
We measure both walk and graph probabilities in terms of $n$,
 the size of the vertex
set of the graph.

We use the expression {\em with high probability} (w.h.p.),  to mean with probability
$1-o(1)$, where the $o(1)$ is a function of $n$,
which tends to zero as $n \rai$. For the proofs in this paper,
we can take $o(1)=O(\log^{-K}n)$ for some large positive constant $K$.
The statement of theorems in this section  are w.h.p. relative to both graph sampling and walks on the sampled graph.
It will be clear when we are discussing
properties of the the graph,  these are given in Section \ref{props}.
In the case where we use deferred decisions, if $|\cR(t)|=N$, the
w.h.p. statements are asymptotic in  $N$,
and we assume $N(n) \rai$ with $n$.

Let $W$ be a random walk $W$ on a graph $G$.
If we need to stress the start position $u$ of the walk $W$, we write $W_u$.
The vertex occupied by $W$ at step $t$ is given by $X(t)$ or $X_u(t)$.
Generally we use $\Pr(A)$ or $\Pr_W(A)$ to denote the probability of event $A=A(t)$ at some step $t$ of the random walk $W$.
We use $P$ for the transition matrix of the walk, and use $P_u^{t}(v)$ or $P_u^{t}(v; G)$  for the $(u,v)$-th entry of $P^t$, i.e
 $ P_u^{t}(v)=\Pr(X_u(t)=v)$.  When using  generating functions we use simple unencumbered notation such as $h_t,f_t,r_t$ for the probability that certain specific events occur at step $t$. In particular for a designated start vertex $v$,
$r_t=P_v^{t}(v)$. We use $\pi_v$, or $\pi_G(v)$ for the stationary probability of a random walk $W$ at vertex $v$ of a graph $G$.
The notation $p_v$ has a specific meaning in the context of Lemma \ref{First},
and is reserved for that.

\subsection{
Simple random walk: Structure of vacant set and vacant net}
\label{simpl}

Let $G_r(n)$ be the space of $r$-regular graphs on $n$ vertices,
and let $G$ be chosen u.a.r. from $G_r(n)$.
The following theorem details established
results for the vacant set of a simple random walk on $G$, as given in \cite{CTD}, \cite{CFVac}.

\begin{theorem}\label{oldstuff}
Let $W(t)$ be a simple random walk on a random $r$-regular graph.
For $r \ge 3$, the following results hold w.h.p..
\begin{enumerate}[(i)]
\item Let $\G(t)$ be the graph  induced by the vacant set $\cR(t)$, at step $t$ of $W$, then
 $\\G(t)$ has $|\cR(t)|$ vertices and $|E(\G(t))|$ edges, where
\beq{Nt}
|\cR(t)| \sim n \exp\brac{-\frac{r-2}{r-1}\frac{t}{n}},\qquad
|E(\G(t))| \sim \frac{rn}{2} \exp\brac{-\frac{2(r-2)}{r}\frac{t}{n}}.
\eeq
\item
The size of
the vacant
net $|\cU(t)|$ at step $t$ of $W$ is
\beq{Ut}
 |\cU(t)| \sim
\frac{rn}{2} \exp\brac{-\frac{2(r-2)}{r(r-1)}\frac{t}{n}}.
\eeq
\item \cite{CFReg}  The vertex and
edge cover times of a non-backtracking walk
are $\Tc^V(G)\sim \frac{r-1}{r-2} n \log n$ and $\Tc^E(G) \sim  \frac{r(r-1)}{2(r-2)} n \log n$ respectively.
\item  The threshold for
the sub-critical phase of the vacant set in $G$ occurs at
$t^*= u^* n$ where
\beq{tstar}
u^* = \frac{r(r-1)}{(r-2)^2} \log (r-1).
\eeq
\end{enumerate}
\end{theorem}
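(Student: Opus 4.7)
The plan is to treat the four parts of Theorem~\ref{oldstuff} in parallel, since parts (i), (ii), (iv) all rest on the first visit time lemma (Lemma~\ref{First}) applied to different ``targets'' in the simple random walk on $G_r$, while (iii) is imported verbatim from \cite{CFReg}. The common engine is the fact that a random $r$-regular graph is locally tree-like w.h.p., so return generating functions on $G_r$ agree up to lower-order terms with those on the infinite $r$-regular tree.

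For (i), the probability a given vertex $v$ is unvisited by step $t$ is $(1-p_v\pi_v)^t(1+o(1))$ by Lemma~\ref{First}, with $\pi_v=1/n$ and $p_v=1/R_v$, where $R_v$ is the expected number of returns to $v$ in a mixing window. On the $r$-regular tree the eventual return probability is $1/(r-1)$, so $R_v\sim(r-1)/(r-2)$ and $\Pr(v\in\cR(t))\sim\exp\bigl(-\frac{r-2}{r-1}\cdot\frac{t}{n}\bigr)$; summing gives the first half of \eqref{Nt}. For $|E(\G(t))|$ I apply Lemma~\ref{First} to the target $\{u,v\}$ for each edge $uv\in E$, with $\pi=2/n$. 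The local-tree computation of returns to $\{u,v\}$ from $u$: one of the $r$ steps stays in the target, the other $r-1$ leave and return (to $u$) with probability $1/(r-1)$ each; iterating, the appropriate $R$ works out to $r/(r-2)$, giving exponent $\frac{2(r-2)}{r}\cdot\frac{t}{n}$ and the second half of~\eqref{Nt}. For (ii) I replace the vertex walk by the induced Markov chain on oriented edges: each oriented edge has stationary mass $1/(rn)$, so an undirected edge is targeted with mass $2/(rn)$. A similar locally-tree return computation on the line graph produces the factor $(r-2)/(r-1)$ and yields \eqref{Ut}. In each case, concentration of $|\cR(t)|$, $|E(\G(t))|$ and $|\cU(t)|$ around their means is obtained by a standard second-moment / pair-of-targets calculation and Chebyshev.

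For part (iv), the strategy is the one carried out in \cite{CTD,CFVac}: use deferred decisions on the configuration model representation of $G_r$, exposing pairings only as edges are used by the walk, so that conditional on the trajectory $W(t)$ the induced graph on $\cR(t)$ is (up to negligible perturbation) a configuration-model random graph with a known degree sequence. By (i), the mean degree inside $\G(t)$ is
\[
\frac{2|E(\G(t))|}{|\cR(t)|}\sim r\exp\!\Bigl(-\tfrac{(r-2)^2}{r(r-1)}\cdot\tfrac{t}{n}\Bigr),
\]
and the per-vertex degree distribution is close to $\mathrm{Bin}(r,p)$ with $p=\exp(-\tfrac{r-2}{r-1}\cdot\tfrac{t}{n})$. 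The Molloy--Reed criterion $\mathbb{E}[D(D-1)]=\mathbb{E}[D]$ becomes $(r-1)p=1$, i.e.\ $p=1/(r-1)$; solving gives $t^*/n=\frac{r(r-1)}{(r-2)^2}\log(r-1)$, which is~\eqref{tstar}. The sub-critical/super-critical dichotomy then follows from the standard Molloy--Reed analysis: below the threshold a sprinkling / branching-process argument produces a unique linear-size component, and above the threshold the component exploration is dominated by a sub-critical Galton--Watson tree so all components are $O(\log n)$.

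Part (iii) I simply cite from \cite{CFReg}. The main obstacle I expect is the deferred-decisions step in (iv): one must argue that revealing configuration-model pairs only on demand keeps the unexposed stubs uniformly paired, and simultaneously that the boundary between $\cB(t)$ and $\cR(t)$ is small enough (w.h.p.\ $o(|\cR(t)|)$ stubs have been committed) that the distribution of $\G(t)$ is contiguous with the appropriate configuration model. Once that reduction is in place, the component-structure consequences follow from off-the-shelf random graph results; the remaining technical cost lies in the return-time computations of (i)--(ii), which are tight enough to pin down the constants but routine given the locally-tree-like structure of $G_r$.
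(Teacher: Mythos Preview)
Theorem~\ref{oldstuff} is stated in the paper as a summary of established results from \cite{CTD}, \cite{CFVac}, \cite{CFReg}; no proof is given here, so there is no ``paper's proof'' to compare against. Your sketch for (i)--(iii) follows the methodology used in those references and in Sections~\ref{basiclem}--\ref{proofs} of this paper, and is correct. (For (ii), your oriented-edge chain is a clean alternative to the paper's subdivide--contract construction and does yield $R_S=(r-1)/(r-2)$.)

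Part (iv), however, contains a genuine error. You correctly compute the mean degree in $\G(t)$ as $r\exp\bigl(-\tfrac{(r-2)^2}{r(r-1)}\tfrac{t}{n}\bigr)$, and your claim that the degree of a vertex $v\in\cR(t)$ is asymptotically binomial is in fact true (one can verify it via star contractions, as in Lemma~\ref{lemsip}). But the binomial parameter is \emph{not} your $p=\exp\bigl(-\tfrac{r-2}{r-1}\tfrac{t}{n}\bigr)$: that is the \emph{unconditional} probability $\Pr(w\in\cR(t))$, whereas the degree in $\G(t)$ requires the \emph{conditional} probability
\[
q=\Pr\bigl(w\in\cR(t)\mid v\in\cR(t)\bigr)=\exp\!\Bigl(-\tfrac{(r-2)^2}{r(r-1)}\tfrac{t}{n}\Bigr),
\]
obtained from your own edge calculation as $\Pr(v,w\in\cR(t))/\Pr(v\in\cR(t))$. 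Your mean-degree formula already contradicts $\mathrm{Bin}(r,p)$, since $rp\ne rq$. More damagingly, solving $(r-1)p=1$ with your $p$ gives $t/n=\tfrac{r-1}{r-2}\log(r-1)$, not $u^*=\tfrac{r(r-1)}{(r-2)^2}\log(r-1)$; the answer you wrote is correct only because it matches the statement, not because it follows from your equation. With $q$ in place of $p$ the argument goes through. Alternatively---and this is what \cite{CFVac} and Lemma~\ref{mylem} of this paper actually do---one bypasses the degree distribution by computing $M(1,t)=2|E(\G(t))|$ and $M(2,t)$ (the number of cherries, via a three-vertex contraction for $\Pr(u,v,w\in\cR(t))$ on a path $uvw$) and checking the Molloy--Reed condition in the form $2M(2,t)=M(1,t)$.
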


We now come to the new results of this paper.
We first consider  the structure of the graph $\wh \G(t)$ induced by the edges in the vacant net $\cU(t)$ of $G_{r}$.
 By using the random walk to
  reveal the structure of the graph, we argued in \cite{CFVac} that $\G(t)$ was a
  random graph with degree sequence $D_s(t), s=1,...,r$. We applied the result of
   Molloy and Reed \cite{MR1} for the existence of a giant component in fixed degree sequence graphs, to  the degree sequence $D_s(t)$  to obtain the threshold $t^*=u^*n$ given in \eqref{tstar}.
By using a simplification of the Molloy-Reed condition in terms of moments of the degree sequence we  can obtain the threshold  for
 the vacant net $\wh \G(t)$. The proof of the next theorem is given in Section
\ref{proofs}.
\begin{theorem}\label{th3}\label{SRW}
 Let $\wh t=\th^* n$. Then w.h.p. for any $\e >0$,
the graph $\wh \G(t)=(V, \cU(t))$ induced by the unvisited edges  $\cU(t)$ of $G$ has the following
properties:
\begin{enumerate}[(i)]
\item  The threshold for
the sub-critical phase of the vacant net in $G$ occurs at
$\wh t= \th^* n$ where
\beq{t*}
\th^*= \frac{r(r^2-2r+2)}{2(r-2)^2} \log (r-1).
\eeq
\item For $t\leq (1-\e)\wh t$, $\wh\G(t)$
is super-critical,  and $|C_1(t)|=\Om(n)$.
\item For $t\geq (1+\e) \wh t$, $\wh \G(t)$ is sub-critical, and thus $|C_1(t)|=O(\log n)$.
\item For some constant $c>0$ and  $t \in (\wh t -cn^{2/3}, \wh t + cn^{2/3})$, then $\Pr(|C_1(t)|= \Th(n^{2/3})) \ge 1-\e$.
\end{enumerate}
\end{theorem}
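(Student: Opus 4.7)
The plan is to reveal the vacant net $\wh\G(t)$ via deferred decisions on $G$ as the random walk $W$ progresses, so that at step $t$ the unvisited edges together with all vertices form (conditional on certain statistics of $W$) a configuration-model random graph. Write $u=t/n$. For each $v\in V$, its degree in $\wh\G(t)$ equals $r$ minus the number of distinct edges at $v$ traversed by $W$; using the first-visit generating-function apparatus of \cite{CFReg}, together with a local analysis at $v$ describing how each visit consumes an entering and a leaving half-edge, I would establish that w.h.p.\ the counts $\wh D_s(t)=|\{v:d_{\wh\G(t)}(v)=s\}|$ satisfy $\wh D_s(t)/n \to \wh d_s(u)$ for an explicit limiting profile $\wh d_s(u)$, whose first moment is consistent with \eqref{Ut}. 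Then, arguing as in Section 3 of \cite{CFVac}, conditional on $\wh D_\bullet(t)$ the graph $\wh\G(t)$ is contiguous to a configuration model with that degree sequence.

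Having this random-graph description, parts (i)--(iii) follow from the Molloy--Reed criterion applied to the limiting profile $\wh d_\bullet(u)$. The sub/super-critical boundary is
\[
Q(u):=\sum_{s\ge 1} s(s-2)\,\wh d_s(u)=0,
\]
equivalently $\E[D^2]=2\,\E[D]$ where $D$ is the degree of a uniform vertex in $\wh\G(t)$. Computing the second moment from the visit statistics at $v$ and solving $Q(u^*)=0$ is the calculation that should yield precisely $\th^*=\frac{r(r^2-2r+2)}{2(r-2)^2}\log(r-1)$, giving (i). Parts (ii) and (iii) are then immediate from the standard Molloy--Reed theorem; the $O(\log n)$ bound on all components in the sub-critical regime is automatic because the maximum degree is $r$.

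For the critical window in (iv), I would linearise $Q$ around $u^*$: since $Q'(u^*)\neq 0$, for $t\in(\wh t\pm c n^{2/3})$ one has $Q(t/n)=\Theta(n^{-1/3})$. Applied to the configuration-model representation, the critical-window theorems for random graphs of bounded degree with a prescribed degree sequence (in the spirit of the results of Joseph, Hatami and Molloy, and Riordan) assert that when the Molloy--Reed parameter is $\Theta(n^{-1/3})$ and the third moment of the degree sequence is bounded (both satisfied here since degrees are at most $r$), the largest component has size $\Theta(n^{2/3})$ with probability arbitrarily close to one; choosing $c$ sufficiently large then gives $\Pr(|C_1(t)|=\Theta(n^{2/3}))\ge 1-\e$.

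The technical core of the whole argument is computing $\wh d_s(u)$ exactly, and in particular the second moment $\sum_s s^2 \wh d_s(u)$ that pins down $\th^*$. In the vacant-set case of \cite{CFVac} a vertex is simply visited or not, but for the vacant net one must track which of $v$'s $r$ incident edges have been used. The pair of half-edges consumed on each visit to $v$ is not an independent multinomial sample, since the walk must leave $v$ and return through the surrounding graph, so the two half-edges are coupled through the walk's excursion statistics; carrying out a sufficiently accurate coupling to a branching-process-type approximation for the walk in a neighbourhood of $v$, and transferring the resulting local estimate to the global first- and second-moment sums, is the delicate step. Once $\wh d_s(u)$ is pinned down, the rest is a direct application of existing random-graph machinery.
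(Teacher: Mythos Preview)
Your high-level architecture matches the paper's: reveal $G$ via the configuration model as the walk runs, so that the vacant net is a configuration-model graph on its residual degree sequence; apply Molloy--Reed for (i)--(iii); apply Hatami--Molloy for (iv). That part is fine.

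The substantive divergence is exactly at the step you flag as ``the technical core'': you propose to compute the full limiting degree profile $\wh d_s(u)$ and then evaluate $\sum_s s(s-2)\wh d_s(u)$. The paper deliberately avoids this. Writing $\wh D_s(t)$ by inclusion--exclusion over which incident edges are unvisited produces alternating sums that the authors could not solve for the threshold in closed form. Their key observation is that one never needs the individual $\wh d_s$: since
\[
\sum_{s} s(s-2)D(s)=\sum_v d(v)(d(v)-1)-\sum_v d(v)=2M(2)-M(1),
\qquad M(s)=\sum_{v}\binom{d_R(v,t)}{s},
\]
the Molloy--Reed condition is equivalent to $2M(2,t)\sim M(1,t)$. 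Then $\E M(s,t)=\sum_{u\in\cN}\binom{r}{s}\ol P(u,s,t)$ where $\ol P(u,\ell,t)$ is the probability that a \emph{specified} $\ell$-subset of edges at $u$ is unvisited. This last quantity is obtained not by tracking visits to $u$ and which half-edges they consume, but by a \emph{subdivide--contract} construction: insert a midpoint into each of the $\ell$ edges, contract the midpoints to a single vertex $\g(\ell)$, and compute the unvisit probability of $\g(\ell)$ via Corollary~\ref{geom} and a first-return calculation in the infinite tree. This yields the explicit exponents $\a_1,\a_2$, and solving $2M(2,t)=M(1,t)$ gives $\th^*$ directly. The Hatami--Molloy hypotheses for (iv) are likewise checked through $M(1),M(2),M(3)$ rather than through $\wh d_s$.

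So your plan is not wrong in principle, but the place you label ``delicate'' is precisely the obstacle the paper circumvents; without the moment reformulation and the subdivide--contract device, it is not clear your route reaches the explicit value of $\th^*$.
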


\subsection{
Non-backtracking random walk: Structure of vacant set and vacant net}
\label{no-o-bak}

Speeding up random walks is a matter of both theoretical curiosity  and practical interest. One plausible approach to this
is to
use a non-backtracking walk. A  non-backtracking walk does not
move back down the edge used for the previous transition unless there is no choice. Thus arguably it  should be faster to cover the graph.
Let $v=X(t)$ be the vertex occupied by the walk at step $t$, and suppose this vertex was reached by
the edge transition  $e=(X(t-1),X(t))$. The vertex $u=X(t+1)$
 is chosen u.a.r. from $N(v) \sm X(t-1)$, so that
 $e \ne (X(t),X(t+1))$.
If there is no choice, i.e.
$X(t)$ is a vertex of degree 1,
we can assume the walk returns along $e$, but as $r \ge 3$ this
case does not arise.

In the case of random $r$-regular graphs, a direct comparison  can be made
between the performance of simple and non-backtracking random walks.
The details for non-backtracking walks are summarized in the following theorem,
the proof of which is given in Section\ref{noback}.
The comparable results for simple walks are given in Section \ref{simpl}.

\begin{theorem}\label{nobac}
Let $W(t)$ be a non-backtracking random walk on a random $r$-regular graph.
For $r \ge 3$,  the following results hold w.h.p..
\begin{enumerate}[(i)]
\item Let $\G(t)$ be the graph  induced by the vacant set $\cR(t)$, at step $t$ of $W$, then
 $\\G(t)$ has $|\cR(t)|$ vertices and $|E(\G(t))|$ edges, where
\[
|\cR(t)| \sim  n  \exp\brac{-t/n},
\qquad\qquad
|E(\G(t))| \sim \frac{rn}{2} \exp\brac{-\frac{2(r-1)t}{rn}}.
\]
\item
The size of
the vacant
net $|\cU(t)|$ at step $t$ of $W$ is
\[
|\cU(t)| \sim  \frac{rn}{2}  \exp\brac{-2t/rn}.
\]
\item The vertex and
edge cover times of a non-backtracking walk
are $\Tc^V(G)\sim n \log n$ and $\Tc^E(G) \sim  (r/2) n \log n$ respectively.
\item
The threshold  for the sub-critical phase of the vacant set occurs at $t^*=u^* n$ where
\[
u^* \sim  \frac{r}{r-2} \log (r-1).
\]
\item
The threshold  for the sub-critical phase of the vacant net occurs at $\wh t=\th^* n$ where
\[
 \th^* \sim  \frac{r(r-1)}{2(r-2)} \log (r-1).
\]
\item Let $\wt t=t^*, \wt t$, for the vacant set and vacant net respectively.
    For any $\e >0$, some constant $c>0$ and   $t \in (\wt t -cn^{2/3}, \wt t + cn^{2/3})$, then $\Pr(|C_1(t)|= \Th(n^{2/3})) \ge 1-\e$.
\end{enumerate}
\end{theorem}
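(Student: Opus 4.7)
The plan is to adapt the machinery developed for the simple random walk in \cite{CFVac} (and used for Theorem \ref{SRW}) to the non-backtracking setting. The key conceptual shift is to regard a non-backtracking walk as a Markov chain on the set of $rn$ directed edges of $G_r(n)$; on any $r$-regular graph this chain is doubly stochastic, so its stationary distribution is uniform on directed edges, giving $\pi_v = 1/n$ for each vertex and $\pi_e = 2/(rn)$ for each undirected edge. All six assertions of the theorem will be derived from sharp first-visit estimates for this directed-edge chain.

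The first step is to establish an analogue of Lemma \ref{First} for the directed-edge chain, yielding $\Pr(v \notin \cB(t)) \sim (1 - p_v)^t$ for a suitable $p_v$, and similarly for single edges and for multi-vertex sets. Since the local neighborhood of any vertex in $G_r(n)$ is whp a tree to depth $\Omega(\log n)$, and a non-backtracking walk on a tree never returns to its start, the return generating function satisfies $R_v(1) = 1 + o(1)$. The first-visit rate to a set $A$ of directed edges is then approximately $\pi(A)(1 - P_{AA})$, where $P_{AA}$ is the one-step stay-in-$A$ probability from stationarity on $A$. A direct count gives $p_v \sim 1/n$ and $p_e \sim 2/(rn)$ (both with $P_{AA} = 0$), and for an adjacent pair $\{u,v\}$, $p_{\{u,v\}} \sim 2(r-1)/(rn)$ with $P_{AA} = 1/r$; the factor $(1-1/r)$ reflects that after visiting $u$ (resp.\ $v$) the walk can jump to $v$ (resp.\ $u$) along the joining edge with probability $1/(r-1)$, so the two visits count as a single excursion of the killed chain.

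Substituting these rates into expectations yields the asymptotics claimed in (i) and (ii) for $\E|\cR(t)|$, $\E|E(\G(t))|$ and $\E|\cU(t)|$, and a second-moment argument using pairwise joint first-visit estimates delivers concentration. Part (iii) follows from a Matthews-type upper bound combined with a coupon-collector lower bound based on the near-independence of first-visits to distinct vertices (resp.\ edges), with expected first-visit times $n$ and $rn/2$. For the thresholds (iv) and (v), I apply the Molloy-Reed criterion to the random multigraph produced by deferred decisions as the walk reveals edges, in the manner of \cite{CFVac}. The critical condition reduces to an identity between three-vertex and two-vertex joint first-visit probabilities (for the vacant set) and between pairs and singles of edges at a common vertex (for the vacant net); plugging in the rates from the previous step gives $u^* = (r/(r-2)) \log(r-1)$ and $\th^* = (r(r-1)/(2(r-2))) \log(r-1)$. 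For part (vi), within a window of width $O(n^{2/3})$ around the threshold the relevant degree sequence still satisfies the Molloy-Reed criticality condition with bounded variance, so the standard $\Theta(n^{2/3})$ scaling of $|C_1|$ for the critical configuration model transfers directly.

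The main obstacle is the sharp first-visit analysis of the killed directed-edge chain, and in particular the careful computation of $P_{AA}$ and the associated return generating functions for the multi-element sets arising from adjacent pairs and triples of vertices and neighbouring edges. Returns to a single vertex require a cycle of length $\Omega(\log n)$ and are easily dismissed, but short internal excursions within $A$ along edges already present in one step contribute at leading order and must be tracked with the correct constants; otherwise the numerical coefficients in the threshold formulas come out wrong. A further subtlety is that the chain is non-reversible, so the tools in \cite{CFVac} need to be recast in terms of the directed-edge transition operator rather than the vertex one. Once these generating-function estimates are in place, the remaining arguments follow the template of Theorem \ref{SRW} and \cite{CFVac} with the non-backtracking numerical inputs.
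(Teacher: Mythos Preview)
Your proposal is correct and follows essentially the same route as the paper: lift the non-backtracking walk to the Markov chain $\cM$ on the $rn$ directed edges, use the uniform stationary distribution, exploit that a non-backtracking walk on a tree never returns so that $R_\gamma=1+o(1)$ for singleton targets, and for multi-element targets compute the one-step internal-transition probability (your $P_{AA}$, which is exactly the loop density $f$ at the contracted vertex $\gamma(S)$ in the paper's formulation) to get $p_{\gamma(S)}\sim\pi(S)(1-f)$; then feed these rates into the Molloy--Reed condition $2M(2,t)=M(1,t)$ and into the Hatami--Molloy scaling-window theorem. Your numerical values for $p_v$, $p_e$, and $p_{\{u,v\}}$ all match the paper's.

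Two places where the paper is more explicit than your sketch and which you should not skip: first, the mixing time of $\cM$ is not inherited from that of $G$ and requires a separate conductance argument for the directed-edge digraph (the paper's Lemma~\ref{snots}); second, because $\cM$ is non-reversible, the contraction-to-$\gamma(S)$ step used to invoke Corollary~\ref{geom} does not automatically preserve stationary mass, so the paper checks $\tilde\pi=\tilde\pi\tilde P$ by hand for each target set $S$ to confirm $\tilde\pi(f)=\pi(f)$ off $\gamma(S)$ --- your parenthetical about recasting for the non-reversible chain is exactly this issue, and it needs to be carried out rather than just flagged.
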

Comparing $u^*, \th^*$ for simple and non-backtracking walks, from \eqref{tstar}, \eqref{t*}
and Theorem \ref{nobac} respectively, we see that for $r=3$ the subcritical phases occur $2,\; 5/2$
times  earlier for vacant sets and vacant nets (resp.). This improvement decreases  rapidly as $r$ increases. A direct contrast between the densities of the vacant set for the two walks follows from the edge-vertex ratios $|E(\G(t))|/|\cR(t)|$.
At any step $t$ the vacant set of the simple random walk is denser w.h.p..

\subsection{Random walks which prefer unvisited edges: Structure of vacant set and vacant net}\label{unviz}

The papers \cite{BCF}, \cite{OS} describe a modified random  walk $X=(X(t),\; t \ge 0)$ on a graph $G$, which uses unvisited edges when available at the currently occupied vertex.
If there are {\em unvisited  edges} incident with the current
vertex, the walk  picks one u.a.r. and make a transition along this edge.
If there are no unvisited edges incident with the current vertex, the walk moves to a random neighbour.

In \cite{BCF} this walk
was called an {\em unvisited edge process} (or  edge-process), and in \cite{OS},
a {\em greedy random walk}.
For random  $r$-regular graphs where $r=2d$, it was shown in \cite{BCF} that the edge-process has vertex cover time $\Th(n)$, which is best possible up to a constant. The paper also gives an upper bound of  $O(n \om)$ for the edge cover time. The $\om$ term comes from the w.h.p. presence of small cycles (of length at most $\om$).

In the case of random $r$-regular graphs,
the vacant set and vacant net of the edge-process
have the following theorem which is proved in Section \ref{edge-proc}.

\begin{theorem}\label{newedge}
Let $X$ be an edge-process on a random $r$-regular graph.
For $r \ge 4, \; r=2d$,
the following results hold w.h.p..
\begin{enumerate}[(i)]
\item Let $\G(t)$ be the graph  induced by the vacant set $\cR(t)$ of the edge-process at step $t$. Then for $\d>0$ and any $t=dt(1-\d)$ the vacant set
 has $|\cR(t)|$ vertices and $|E(\G(t))|$ edges, where
\[
|\cR(t)| \sim  n \bfrac{dn-t}{dn}^d,
\qquad\qquad
|E(\G(t))| \sim  dn \bfrac{dn-t}{dn}^{2d-1}
\]
\item The vertex cover time of the edge-process is
$\Tc^V(G)\sim dn$.
\item
The threshold  for the sub-critical phase of the vacant set occurs at $t^*\sim u^* n$ where
\[
u^* \sim  d \brac{1- \bfrac{1}{2d-1}^{\frac{1}{d-1}}}.
\]
For any $\e>0$ and $t=t^*(1-\e)$, the largest component $C_1(t)$ is of size $\Th(n)$, whereas for $t=t^*(1+\e)$, the largest component is of size $O(\log n)$.
\item
For $t = dn(1-\d)$, and $\d \ge \sqrt{\om \log n/n}$, the
the vacant
net $\cU(t)$  of the edge-process is of size $dn\d\ooi$.
\item
The threshold  for a phase transition of the vacant net occurs at $\wh t \sim dn$.
For any $\e>0$ and $t=\wh t(1-\e)$, the largest component $C_1(t)$ is of size $\Th(n)$, whereas for $t=\wh t(1+\e)$, the largest component is of size $O(\log n)$.
\end{enumerate}
\end{theorem}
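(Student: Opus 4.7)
The approach is to expose the random regular graph via the configuration model, synchronised with the walk (principle of deferred decisions). At each \emph{productive} step (the current vertex has at least one unvisited incident edge) the walk picks an unvisited source half-edge at its current vertex and pairs it uniformly with one of the remaining unmatched half-edges, revealing both the next edge and the next vertex. Because $r=2d$ is even, every productive visit to a vertex consumes exactly two of its $2d$ half-edges, so a vertex saturates after at most $d$ visits and by step $t=dn$ all $2dn$ half-edges have been consumed. A preliminary step is to bound the \emph{unproductive} time, during which the current vertex has no unvisited incident edge and the walk falls back on a random visited-edge transition. Using that random $r$-regular graphs are w.h.p.\ expanders and that few vertices are saturated for $t\le dn(1-\om^{-1})$, each unproductive excursion hits an unsaturated vertex in $O(1)$ expected steps; a union bound over saturation events then gives total unproductive time $o(n)$ w.h.p., so the clean configuration exposure governs the process up to a negligible correction.

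Parts (i) and (iv) follow by direct computation in the configuration model. The event $\set{v\in\cR(t)}$ is exactly the event that no target ever lands on $v$'s $2d$ half-edges, and since at step $s$ the target is uniform on the $2dn-2s+1$ remaining unmatched half-edges,
\[
\Pr(v\in\cR(t))\;=\;\prod_{s=1}^{t}\brac{1-\frac{2d}{2dn-2s+1}}\;\sim\;\brac{\frac{dn-t}{dn}}^{d}.
\]
Summation over $v$ and a second-moment estimate on pairwise correlations give the stated concentration of $|\cR(t)|$, and a parallel analysis of the residual half-edge matching at two unvisited endpoints gives $|E(\G(t))|\sim dn(1-t/(dn))^{2d-1}$. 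For (iv), each productive step consumes one previously unvisited edge, so $|\cU(t)|=dn-t$ plus the $o(n)$ unproductive re-traversal error, which is dominated by $dn\d$ as soon as $\d\ge\sqrt{\om\log n/n}$. Part (ii) is then immediate from (i): the formula forces $|\cR(dn)|\to 0$, and no vertex can saturate before $d$ visits, so $\Tc^V(G)\sim dn$.

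For (iii) and (v) I would apply the Molloy--Reed criterion to the induced random degree sequences. In $\G(t)$, conditional on the vacant set, the residual matching at unvisited half-edges is asymptotically uniform, and a given such half-edge lands on another unvisited vertex with probability $q\sim (1-t/(dn))^{d-1}$; the degree of an unvisited vertex is therefore asymptotically $\mathrm{Bin}(2d,q)$, and the critical condition $\mathrm{E}[D(D-2)]=0$ reduces to $(2d-1)q=1$, yielding $u^*=d(1-(1/(2d-1))^{1/(d-1)})$. A standard configuration-model exploration then promotes this to $|C_1(t)|=\Th(n)$ below threshold and $O(\log n)$ above. For $\wh\G(t)$ one tracks the joint cell sizes of unvisited, visited-but-unsaturated and saturated vertices together with the $\mathrm{Bin}$-type distribution of vacant-net degrees; the resulting graph has a giant component as long as $t$ is bounded away from $dn$ from below, while $\wh\G(t)$ is empty once $t\ge dn$, giving $\wh t\sim dn$.

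The main technical obstacle is controlling the unproductive-time contribution: after a saturation the walk is no longer driven by the clean configuration exposure, and coupling it back into the productive regime requires both the expansion of $G_r$ and careful half-edge bookkeeping, particularly in the final window where the unsaturated vertex set shrinks. The parity assumption $r=2d$ is essential precisely because it guarantees every productive visit pairs exactly two half-edges, preventing the stranded-half-edge behaviour that would otherwise force the $\Th(n\log n)$ cover times visible in the odd-$r$ experimental plot mentioned in the introduction.
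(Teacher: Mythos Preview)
Your overall architecture---expose $G$ in the configuration model along the walk, show the unproductive (blue) time is negligible, read off the degree statistics of the vacant set/net, and feed them into Molloy--Reed---is exactly the paper's. Your direct product
\[
\Pr(v\in\cR(t))=\prod_{s=1}^{t}\Bigl(1-\tfrac{2d}{2dn-2s+1}\Bigr)\sim\Bigl(\tfrac{dn-t}{dn}\Bigr)^d
\]
is a clean alternative to the paper's ``Pairs-process'' coupling, and your $\mathrm{Bin}(2d,q)$ heuristic with $q=((dn-t)/dn)^{d-1}$ leads to the same $u^*$ via $2M(2)=M(1)$.

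There is, however, a genuine gap in your control of the blue time. Two of the supporting claims are false as stated. First, ``few vertices are saturated for $t\le dn(1-\om^{-1})$'' is the wrong way round: at red step $t=dn(1-\d)$ the fraction of saturated vertices is $(1-\d)^d\approx 1-d\d$, so for small $\d$ almost all vertices are saturated. Second, the hitting time of the unsaturated set $S(t)$ from a saturated vertex is $\Theta(n/|S(t)|)=\Theta(1/\d)$, not $O(1)$. To make the ``total blue time $=o(n)$'' argument work you also need a bound on the \emph{number} of blue phases, and ``a union bound over saturation events'' does not give one: most vertices saturate in passing during a single long red phase, so the number of phases is far smaller than the number of saturations, but you have not said why. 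The paper's mechanism here is the structural fact you allude to but do not exploit: because $r$ is even, each red phase is a closed Eulerian excursion returning to its start vertex, so (a) the concatenated blue phases form a genuine simple random walk on $G$, and (b) a first-return-time calculation in the configuration model (the paper's Lemma on $L(s,k)$) shows that w.h.p.\ only $O(1/\d)$ red phases occur by red step $dn(1-\d)$. Combining $O(1/\d)$ phases with blue-phase length $O((\log n)/\d)$ gives blue time $O((\log n)/\d^2)=o(n)$ for $\d\ge\sqrt{\om\log n/n}$.

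Your treatment of part~(v) subcritical is also too quick. Saying ``$\wh\G(t)$ is empty once $t\ge dn$'' conflates red steps with edge-process steps; the content of the subcritical claim is that at some edge-process step $t=dn(1+o(1))$ every component of the vacant net has size $O(\log n)$. The paper needs a separate three-stage argument for this: (1) at red time $dn(1-\sqrt{\om\log n/n})$ the vacant net consists of $O(\sqrt{n\om\log n})$ degree-$2$ vertices and $O(\om\log n)$ degree-$4$ vertices; (2) a further $o(n)$ blue steps (using expander hitting-time bounds) visit every $(2,4)$-cycle, killing all complex components; (3) a further $o(n)$ blue steps break all long $2$-cycles. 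None of this is captured by ``empty once $t\ge dn$''.
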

As for the edge cover time $\Tc^E(G)$ of the edge-process, trivially
$\Tc^E(G) \ge dn$.
It was proved in \cite{BCF} that  $\Tc^E(G)=O(\om n)$. The $\om$ term comes from
the presence of cycles size $O(\om)$. We do not see any obvious reason
from the proof of Theorem \ref{newedge} to suppose $\Tc^E(G)=\Th(n)$.

\subsection{Outline of proof methodology}

The proof of the vacant net threshold, Theorem \ref{SRW}, is given in Section \ref{proofs}. The proof of Theorem \ref{nobac} on the properties of the vacant set and vacant net for non-backtracking random walks is given in Section \ref{noback}. The technique  used to analyze the structure of random walks is one  the authors have developed over a sequence of papers. The results we need in the proof of this paper are given in Section \ref{basiclem}.

The method of proof of the main theorems is similar.
The main steps in the  proof of (e.g.) Theorem \ref{SRW}
are as follows.
(i) In Section \ref{props} we state the structural graph properties
we assume in order  to analyse a random walk on an $r$--regular graph.
(ii) Given these properties, in Section \ref{degseq} we obtain the degree sequence $\wh \bd(t)$ of the
vacant net $\wh \G(t)$ at step $t$ of the walk. The degree sequence is given in an implicit form.
(iii) In Section \ref{uniformity}, we prove that $\wh \G(t)$ is a random graph with degree sequence $\wh \bd(t)$.
(iii) In Section \ref{MRC} we obtain the component structure of $\wh \G(t)$.
This follows from a result of Molloy and Reed
\cite{MR1} on the component structure of fixed degree sequence random graphs.

We next give more detail of the general method used  to prove structural
properties of the vacant set or vacant net. For ease of description we use the example of the vacant set of a simple random walk, and highlight any differences for the other cases as appropriate.
There are two main features.

Firstly
we use the random walk to generate the graph in the configuration model.
If we stop the walk at any step, the un-revealed part of the graph is still
random conditional on the structure of the revealed part, and the
constraint that all vertices have degree $r$. The approach is equally valid for
other Markov processes such as non-backtracking random walks.
Secondly using the techniques given in Section \ref{unvisit} we can
estimate the size $N(t)$, and degree sequence $\bd(t)$, of the vacant set $\cR(t)$ very precisely at a given step $t$.

Combining these results, the graph $\G(t)$ of the vacant set
is thus a random graph with $N(t)$ vertices and degree sequence $\bd(t)$.
Molloy and Reed \cite{MR1} derived  conditions for the existence of, and size of the giant component in a random graph with a given degree sequence. We apply these conditions to $\G(t)$ to obtain the threshold etc. This is what we did in \cite{CFVac}, and we do not reproduce in detail those aspects of (e.g.) Theorem \ref{nobac} which directly repeat these methods.

\ignore{
Using the same approach, we can also
estimate the degree sequence $\wh {\bd}(t)$, and size $\wh N(t)$  of the vacant net $\wh \G(t)$, the subgraph induced by the unvisited edges. The answers are obtained as
 alternating sums, which make finding an explicit threshold for the vacant net
using the Molloy-Reed condition in Theorem \ref{MR}
seem untractable.
The statement of the Molloy-Reed threshold condition can however be simplified,
in as much as it can be rephrased in terms of some  moments $M(1,t),\; M(2,t)$ of the random walk which we define in Lemma \ref{th4}.
This allows us to apply existing methods to obtain the threshold for the vacant net.
}

\section{Graph properties of $G_r$}\label{props}

Let
\beq{ell1}
\ell_1=\e_1\log_rn,
\eeq
for some sufficiently small $\e_1$. A cycle $C$ is {\em small} if $|C|\leq \ell_1$.
A vertex of a graph $G$ is {\em nice} if it is at distance at least $\ell_1+1$ from any small cycle.

Let $D_{k}(v)$ be the subgraph of $G$ induced by the vertices at distance at most $k$ from $v$.
A vertex $v$ is {\em tree-like} to depth $k$ if $D_k(v)$ induces a tree, rooted at $v$.
Thus a nice vertex is tree-like to depth $\ell_1$.
Let $\cN$ denote the nice vertices of $G$ and $\bcN$ denote the vertices that are not nice.

Let $G_r$ be the space of $r$-regular graphs, endowed with the uniform probability measure. Let $G$ be chosen u.a.r. from $G_r$.
We assume the following w.h.p.  properties.
\begin{align}
&\text{There are at most $n^{2\e_1}$ vertices that are not nice.}\label{nice}\\
&\text{There are no two small cycles within distance $2\ell_1$ of each other.}\label{nice1}\\
& \text{Let } \l= \max(\l_2,\l_n) \text{ be the second largest eigenvalue of the
transition matrix } P. \nonumber\\
& \text{Then } \l_2\leq (2\sqrt{r-1}+\e)/r\leq 29/30, \text{ say.} \label{nice2}
\end{align}
Properties (i), (ii) are straightforward to prove by first moment calculations.
Property (iii) is a result of Friedman \cite{Fried}.

The results we prove concerning random walks on a graph $G$ are all conditional on $G$ having properties \eqref{nice}-\eqref{nice2}.
This conditioning can only inflate the probabilities of unlikely events by $1+o(1)$. This observation includes those
events defined in terms of the configuration model as claimed in Lemma \ref{walkconfig}.
For $r$ constant, the underlying configuration multi-graph is simple with constant probability, and all simple $r$-regular graphs are equally probable.
If a calculation shows that an event $\cE$ has probability at most $\e$ in
the configuration model, then it has probability $O(\e)$ with respect to the corresponding simple graph $G$. We only need to multiply this bound by
a further $1+o(1)$ in order to
estimate the probability conditional on \eqref{nice}-\eqref{nice2}. We will continue using  this convention without further comment.

\section{Background material on unvisit probabilities}\label{basiclem}
\subsection{Summary of methodology}
To find the size of the vacant set or net, we estimate the probability that a given vertex or edge
of  the graph   were  not visited by the random walk during steps $T,..,t$,
 where $T$ is suitably defined mixing time (see \eqref{4}).
 For simplicity, we refer to this quantity as an {\em unvisit probability}.
 We briefly outline of how the unvisit probability is obtained.
This is given in more detail in Section \ref{unvisit}.

 The  quantities needed to estimate the unvisit probability of a vertex $v$
 are the mixing time $T$, the stationary probability $\pi_v$ of  vertex  $v$
 and  $R_v$, defined below.  For a simple random walk $\pi_v=d(v)/2m$.
The  mixing time $T$ we use satisfies a convergence condition given in \eqref{4}.
The theorems in this paper are for random regular graphs $G=G_r$, $r \ge 3$ constant, and w.h.p.  $G$
has constant eigenvalue gap so the mixing time $T=O(\log n)$ satisfies
\eqref{4}.
The non-backtracking walk uses a Markov chain $\cM$  on directed edges.
In Section \ref{snotproof} of the Appendix we prove directly that w.h.p. $T=O(\log n)$

The  unvisit probability $\Pr_W(\cA_v(t))$ is given in  \eqref{atv}-\eqref{atv1} of Corollary \ref{geom} of Lemma \ref{First}  in terms of  $p_v=\sooi \pi_v/R_v$.
For regular graphs $\pi_v=1/n$. The quantity $R_v$ is defined as follows.
For a walk starting from $v$ let $r_0=1$
 and let $r_i$ be the probability the walk returns to $v$ at step $i$. Then
 \[
 R_v= \sum_{i=0}^{T-1} r_i.
 \]
 Thus $R_v$ is the expected number of returns to
 $v$ before step $T$.

Because the Molloy-Reed condition is robust to small changes in degree sequence, for our proofs,
we only need to find the  value of $R_v$ for nice vertices.
This is obtained as follows.
Let $D_{\ell}(v)$ be the subgraph induced by
the vertices at distance at most $\ell$ from $v$. The value of $\ell$ we use is given in
\eqref{ell1}. If $D_\ell(v)$ is a tree,
 we say $v$ is a nice vertex, and use
  $\cN$ to denote the set of nice vertices of graph $G$.
With high probability,  all but $o(n)$ vertices of a random $r$-regular graph are nice.
If $v$ is nice,  the subgraph $D(v)$ is a tree with internal vertices of degree $r$, and we extend $D_\ell(v)$  to an infinite $r$-regular tree $\cT$
rooted at $v$.
The principal quantity used to  calculate $R_v$,
 is $f$, the probability of a first return to $v$ in $\cT$.
 Basically, once the walk is distance $\Th(\log \log n)$ from $v$
 the probability of a return to $v$ during
 $T=O(\log n)$ steps is $o(1)$. Thus calculations for $f$ can be made
 in $\cT$ followed by a correction of smaller order, giving $R_v=(1+o(1))/(1-f)$.
This is formalized in Lemma \ref{lemRv} of the Appendix.

The proofs in this paper use the notion of a set $S$ of vertices or edges
not being visited by the walk during $T,...,t$. Because $R_S$ is not well defined for general sets $S$, to
use Corollary \ref{geom} we contract the set  $S$
to a single vertex $\g(S)$, and
calculate
$R_{\g(S)}$ in the multi-graph $H$ obtained from $G$ by this
contraction. Using Corollary \ref{geom} we obtain the probability that $\g(S)$ is unvisited in $H$.
Lemma \ref{contract}
ensures that the probability  $\g(S)$ is unvisited in $H$ is asymptotically equal to the probability the set $S$ in unvisited in $G$.
In the case of visits to sets of edges rather than vertices, these are subdivided by inserting a
set of dummy vertices $S$, one in the
middle of each edge in question. The set $S$ is then contracted to a vertex $\g(S)$ as before. In the case of the non-backtracking walk things get more complicated as the Markov chain $\cM$ of the walk is on directed edges, but the principle is the same.

The contraction operation changes the graph from $G$ to $H$, which can
alter the mixing time $T$, but  does not
significantly increase it for the following reasons.
The effect of contracting a set of vertices
increases the eigenvalue gap, (see e.g. \cite{LPW} page 168)
so that $1-\l_2(H) \ge 1-\l_2(G)$, and thus $T$
can only decrease.
In the case of edge subdivision, the gap could decrease. However, we only
perform this operation on (at most) $2r$  edges of an $r$-regular graph
with constant eigenvalue gap, and with $r$ constant.
It follows  that the  conductance of $H$
is still constant and thus the mixing time $T(H)$
differs from $T(G)$ by at most a constant multiple.

\subsection{Unvisit probabilities}\label{unvisit}

Our proofs make heavy use of Lemma \ref{First} below.
Let $P$ be the transition matrix of the walk and let
$P_{u}^{t}(v)$ be the $(u,v)$--th entry of $P^t$.
Let $W_u(t)$ be the position of the random walk $W_u$ at step $t$, and  let
$P_{u}^{t}(v)=\Pr(W_{u}(t)=v)$ be the $t$--step transition probability. We assume $G$ is connected and aperiodic, so that
random walk $W_{u}$ on $G$  has  stationary distribution
$\pi$, where $\pi_v=d(v)/(2m)$.

For periodic graphs, we can replace the simple random walk by a {\em lazy} walk,
in which at each step there is a 1/2 probability of staying put.
By ignoring the steps when the particle does not move in the lazy walk
we obtain the underlying simple random walk. For large $t$, asymptotically half of the steps in the lazy walk will not result in a change of vertex. Therefore  w.h.p. properties of the simple walk after approximately $t$ steps can be obtained from properties of the lazy walk after $2t$ steps.
Making the walk lazy doubles the expected number of returns to a vertex and thus
changes $R_v$ (see \eqref{Rv}) to approximately $2R_v$. As we only consider the ratio $t/R_v=2t/2R_v$ in our proofs, our results will not alter significantly.

Suppose that the eigenvalues of the transition matrix  $P$ are $1=\l_1>\l_2\geq \cdots\geq \l_n$.
Let $\l=\max\set{|\l_i|:i\geq 2}$. By making the chain lazy if necessary, we can always make $\l_2=\max(|\l_2|, |\l_n|)$.

 Let $\Phi_G$ be the conductance of $G$ i.e.
\beq{conduck}
\F_G=\min_{S\subseteq V,\p_S\leq 1/2}\frac{\sum_{x\in S}\p_xP(x,\bar{S})}{\p_S},
\eeq
where $P(x,\bar{S})$ is the probability of a transition from $x \in S$ to $\bar S$.
Then,
\begin{align}
&1-\F_G\leq \l_2\leq 1-\frac{\F_G^2}{2}\label{mix0}\\
&|P_{u}^{t}(x)-\pi_x| \leq (\p_x/\p_u)^{1/2}\l^t.\label{mix}
\end{align}
A proof of these can be found for example in Sinclair \cite{Sin} and  Lovasz \cite{Lo}, Theorem 5.1 respectively.

\paragraph{Mixing time of $ G_r$.}
Let $T$ be  such that, for $t\geq T$
\begin{equation}\label{4}
\max_{u,x\in V}|P_{u}^{t}(x)-\pi_x| =\frac{\min_{x \in V}\,\pi_x}{n^3}=\frac{1}{n^4}.
\end{equation}
By assumption \eqref{nice2} (a result of Friedman \cite{Fried})
 we have $\l\leq (2\sqrt{r-1}+\e)/r\leq 29/30$.
In which case  we can take
\beq{Tlogn}
T(G_r)\leq 120\log n.
\eeq
If inequality \eqref{4} holds, we say the distribution of the walk is in {\em  near stationarity}.

\paragraph{Generating function formulation.}
Fix two vertices $u,v$ of $G$.
 Let $h_t=P_u^t(v)$ be
the probability that  the walk  $W_u$ visits  $v$ at step $t$.
Let
\begin{equation}\label{Hz}
H(z)=\sum_{t=T}^\infty h_tz^t
\end{equation}
generate $h_t$ for $t\geq T$.

We next consider the special case of returns to vertex $v$ made by a  walk $W_v$, starting at $v$.
Let
$r_t=P_v^t(v)$ be the probability  that the  walk returns to
$v$ at step $t = 0,1,...$. In particular note that $r_0=1$, as the walk starts at $v$.
Let
$$R(z)=\sum_{t=0}^\infty r_tz^t$$
generate $r_t$,
and
 let
\begin{equation}
\label{Qs} R_T(z)=\sum_{j=0}^{T-1} r_jz^j.
 \end{equation}
 Thus, evaluating $R_T(z)$ at $z=1$, we have $R_T(1) \ge r_0=1$.
Let
\beq{Rv}
R_v=R_T(1)=\sum_{i=0}^{T-1} r_i.
\eeq
The quantity $R_v$, the expected number of returns to $v$
during the mixing time,  has a particular importance in our proofs.

For $t\geq T$ let $f_t=f_t(u \rat v)$ be  the probability that the
first visit made to $v$ by the walk $W_u$ to $v$ {\em in the period}
$[T,T+1,\ldots]$ occurs at step $t$.
 Let
$$F(z)=\sum_{t=T}^\infty f_tz^t$$
generate  $f_t$.
The relationship between $h_j$ and $f_j,r_j$ is given by
\beq{hfr}
h_t = \sum_{k=1}^t f_k r_{t-k}.
\eeq
In terms of generating functions, this becomes
\begin{equation}
\label{gfw} H(z)=F(z)R(z).
\end{equation}
The following lemma gives the probability that a walk, starting from near stationarity
makes a first visit to vertex $v$ at a given step. The content of the lemma is to extend $F(z)=H(z)/R(z)$ analytically beyond $|z|=1$ and extract the asymptotic coefficients.
For the proof of Lemma \ref{First} and Corollary \ref{geom}, see Lemma 6 and Corollary 7 of \cite{CFGiant}.
We use the lemma to estimate  $\E |\cR_T(t)|$, the expected number of vertices unvisited after $T$. The value of $\E |\cR_T(t)|$
 differs from $\E |\cR(t)|$ by at most $T$ vertices, so as $T=O(\log n)$ and
 $\E |\cR_T(t)|=\Th(n)$ this simplification will not affect our results.

\begin{lemma}
\label{First}
For some sufficiently large constant $K$, let
\begin{equation}
\label{lamby} \l=\frac{1}{KT},
\end{equation}
where $T$ satisfies \eqref{4}.
Suppose that
\begin{description}
\item[(i)]
For some constant $\th>0$, we have
$$\min_{|z|\leq 1+\l}|R_T(z)|\geq \th.$$
\item[(ii)]$T\pi_v=o(1)$ and $T\pi_v=\Omega(n^{-2})$.
\end{description}
There exists
\begin{equation}\label{pv}
p_v=\frac{\pi_v}{R_v(1+O(T\pi_v))},
\end{equation}
 such that for all $t\geq T$,
\begin{align}
f_t(u \rat v)&=(1+O(T\pi_v))\frac{p_v}{(1+p_v)^{t+1}}+O(T \pi_v e^{-\l t/2}).\label{frat}\\
&=(1+O(T\pi_v))\frac{p_v}{(1+p_v)^{t}}\qquad for\ t\geq \log^3n. \label{frat1}
\end{align}
\end{lemma}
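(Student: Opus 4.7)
The strategy is to extract $f_t$ from the generating-function identity $F(z)=H(z)/R(z)$ of \eqref{gfw} by locating a dominant singularity of $F$ just outside the unit circle and reading off the coefficients via contour integration.

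First, using the near-stationarity bound \eqref{4} together with the spectral estimate \eqref{mix} to control tails, I would write
$$H(z)\;=\;\pi_v\,\frac{z^T}{1-z}+E_H(z),\qquad R(z)\;=\;R_T(z)+\pi_v\,\frac{z^T}{1-z}+E_R(z),$$
where $E_H,E_R$ are analytic on the closed disk $\{|z|\le 1+\lambda\}$ and of size $O(n^{-3})$ there. The key point is that $(h_t-\pi_v)$ and $(r_t-\pi_v)$ decay like a spectral radius $\mu\le 29/30$ raised to the $t$, and because $\lambda=1/(KT)$ with $K$ large we still have $\mu(1+\lambda)<1$; so the tails from $t\ge T$ form a convergent geometric series on the enlarged disk. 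The only singularity of $H$ and of $R$ inside $\{|z|\le 1+\lambda\}$ is therefore the simple pole at $z=1$ contributed by the $\pi_v z^T/(1-z)$ term.

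Second, clearing $(1-z)$ from numerator and denominator gives $F(z)=N(z)/D(z)$ with
$$N(z)=\pi_v z^T+(1-z)E_H(z),\qquad D(z)=(1-z)R_T(z)+\pi_v z^T+(1-z)E_R(z),$$
both analytic on the closed disk. Since $D(1)=\pi_v\neq 0$, $F$ is regular at $z=1$, consistent with $F(1)\le 1$. Expanding $D(1+x)$ in small $x$ and solving $D(1+x)=0$ to first order gives a unique simple root at
$$z_0\;=\;1+p_v,\qquad p_v\;=\;\frac{\pi_v}{R_v}\bigl(1+O(T\pi_v)\bigr),$$
where the $O(T\pi_v)$ correction comes from the $\pi_v T x$ term in the Taylor expansion of $\pi_v z^T$ and from $(1-z)E_R(z)$. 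Condition (i) of the lemma, $|R_T(z)|\ge\theta$ on the closed disk, is exactly what rules out competing zeros of $D$: wherever $|1-z|$ is not tiny, $|(1-z)R_T(z)|$ dominates both $|\pi_v z^T|$ and $|(1-z)E_R(z)|$ by orders of magnitude, so a Rouch\'e-type comparison forces $D\ne 0$ there.

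Third, shifting the Cauchy contour from a small circle outwards to $|z|=1+\lambda$ and picking up the residue at $z_0$,
$$f_t\;=\;-\,\text{Res}_{z=z_0}\frac{F(z)}{z^{t+1}}\;+\;\frac{1}{2\pi i}\oint_{|z|=1+\lambda}\frac{F(z)}{z^{t+1}}\,dz.$$
Using $N(z_0)=\pi_v(1+O(T\pi_v))$ and $D'(z_0)=-R_v(1+O(T\pi_v))$, the first term evaluates to $p_v(1+p_v)^{-(t+1)}(1+O(T\pi_v))$, giving the main term of \eqref{frat}. On the outer circle $|1-z|\ge\lambda$, so $|(1-z)R_T(z)|\ge\lambda\theta$, which forces $|D(z)|\ge c\lambda$ for a constant $c>0$ and hence $|F(z)|=O(\pi_v/\lambda)=O(T\pi_v)$. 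The contour integral is then bounded by $O(T\pi_v)(1+\lambda)^{-t}=O(T\pi_v\,e^{-\lambda t/2})$, which is the additive error in \eqref{frat}. For $t\ge\log^3n$ this error is super-polynomially smaller than the main term, while the factor $1/(1+p_v)=1+O(T\pi_v)$ is absorbed into the prefactor, yielding \eqref{frat1}.

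The main obstacle is verifying cleanly that $z_0$ is the \emph{unique} zero of $D$ in $\{|z|\le 1+\lambda\}$ and that $N(z_0)/D'(z_0)$ really has relative error $1+O(T\pi_v)$ rather than something larger. Condition (i) carries the uniqueness argument via the Rouch\'e comparison of $(1-z)R_T(z)$ against $\pi_v z^T$, while the two parts of condition (ii) keep $p_v\ll\lambda$ and $T\pi_v=o(1)$, which is precisely what is needed for the first-order expansions of $\pi_v z^T$ and $E_R$ around $z_0$ to produce an $O(T\pi_v)$ correction.
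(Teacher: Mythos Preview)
The paper does not prove this lemma; it simply cites Lemma~6 of \cite{CFGiant} (see the sentence immediately preceding the statement of Lemma~\ref{First}). Your outline---write $F(z)=H(z)/R(z)$, clear the common simple pole at $z=1$, locate the unique zero $z_0=1+p_v$ of the denominator inside $\{|z|\le 1+\lambda\}$ via a Rouch\'e comparison of $(1-z)R_T(z)$ against $\pi_v z^T$, and then push the Cauchy contour out to $|z|=1+\lambda$ picking up the residue at $z_0$---is exactly the argument of that cited reference, and your identification of where conditions (i) and (ii) enter is accurate.

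One small point worth tightening if you write this out in full: your claim that $E_H,E_R$ are analytic and $O(n^{-3})$ on the enlarged disk relies on geometric decay of $h_t-\pi_v$ at a rate strictly below $(1+\lambda)^{-1}$, which you justify by invoking the spectral bound $\mu\le 29/30$ from \eqref{nice2}. That is fine in the setting of this paper, but Lemma~\ref{First} is stated abstractly in terms of the mixing-time condition \eqref{4} alone, which by itself only gives a uniform $O(n^{-4})$ bound for $t\ge T$ and says nothing about further decay. The proof in \cite{CFGiant} handles this by a slightly more careful truncation argument rather than by assuming a fixed spectral gap; if you want a self-contained proof at the stated level of generality you would need to do the same. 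For the applications in this paper the distinction is immaterial.
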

Lemma \ref{First} depends on two conditions  (i), (ii). For nice $G_r$, as as $T \pi_v=O(\log n/n)=o(1)$, condition (ii) holds. For the case where $R_v \ge 1 $ constant, it was shown in \cite{CFRHyper} Lemma 18 that condition (i) always holds. The following corollary follows directly by adding up $f_s(u \rat v)$ for $s\ge t$.

\begin{corollary}
\label{geom}
For $t\geq T$ let $\cA_v(t)$ be the event that $W_u$ does not visit $v$ at steps $T,T+1,\ldots,t$.
Then, under the assumptions of Lemma \ref{First},
\begin{align}
\Pr_W(\cA_v(t))&=\frac{(1+O(T\p_v))}{(1+p_v)^{t}} +O(T^2\p_ve^{-\l t/2})\label{atv}\\
&=\frac{(1+O(T\pi_v))}{(1+p_v)^t}\qquad for\ t\geq \log^3n. \label{atv1}
\end{align}
We use the notation $\Pr_W$ here to emphasize that we are
dealing with the probability space of walks on a fixed $G$.
\end{corollary}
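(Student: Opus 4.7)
The plan is to derive Corollary \ref{geom} as a direct tail summation of the first-visit probabilities \eqref{frat} supplied by Lemma \ref{First}. Since $G$ is finite, connected and aperiodic, with probability one the walk $W_u$ visits $v$ at some time in $\{T,T+1,\ldots\}$, so $\sum_{s\ge T} f_s(u\to v)=1$. Decomposing the complement of $\cA_v(t)$ on the time of the first visit in $[T,\infty)$ then gives
\[
\Pr_W(\cA_v(t)) \;=\; \sum_{s=t+1}^{\infty} f_s(u\to v),
\]
so the whole proof reduces to estimating this tail.

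Substituting \eqref{frat} and splitting the tail into a geometric piece and an exponential error piece, the geometric part evaluates cleanly as
\[
\sum_{s=t+1}^{\infty}\frac{p_v}{(1+p_v)^{s+1}} \;=\; (1+p_v)^{-(t+1)} \;=\; \bigl(1+O(p_v)\bigr)(1+p_v)^{-t}.
\]
Because $R_v\ge r_0=1$, the definition \eqref{pv} of $p_v$ forces $p_v=O(\pi_v)=O(T\pi_v)$, so the factor $1+O(p_v)$ is absorbed by the $1+O(T\pi_v)$ prefactor already present in \eqref{frat}. For the exponential remainder, $\lambda=1/(KT)$ gives $1/(1-e^{-\lambda/2})=O(1/\lambda)=O(T)$, so summing the geometric tail of $T\pi_v\,e^{-\lambda s/2}$ produces $O(T^2\pi_v\,e^{-\lambda t/2})$, which is precisely the error term appearing in \eqref{atv}.

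To upgrade to \eqref{atv1} when $t\ge \log^3 n$, I would use $T\le 120\log n$ from \eqref{Tlogn} to conclude $\lambda t\ge \log^2 n/(120K)$, so $e^{-\lambda t/2}=n^{-\omega(1)}$ decays faster than any polynomial in $n$. Combined with $T^2\pi_v = O(\mathrm{poly}(n))$, this makes the exponential remainder negligible compared with the $O(T\pi_v)$ relative error already carried by the main term $(1+p_v)^{-t}$, and \eqref{atv1} follows. There is no substantive obstacle here: all the analytic work sits inside Lemma \ref{First}, and what remains is a geometric sum together with bookkeeping of two error sources. The only subtlety is verifying that the index shift from $(1+p_v)^{-(t+1)}$ to $(1+p_v)^{-t}$ costs only $1+O(p_v)$, which is exactly the role of the bound $p_v=O(T\pi_v)$.
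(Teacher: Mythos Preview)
Your proof is correct and follows exactly the approach indicated in the paper, which states that the corollary ``follows directly by adding up $f_s(u\to v)$ for $s\ge t$.'' You have filled in the details of this tail summation carefully, including the bookkeeping for the $O(T^2\pi_v e^{-\lambda t/2})$ error term and the absorption of the index shift into the $1+O(T\pi_v)$ factor.
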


Corollary \ref{geom} gives the probability of not visiting a single vertex in time $[T,t]$. We
need to extend this  result to certain small sets of vertices.
In particular we  need to consider sets
consisting of $v$ and a subset of its neighbours $N(v)$. Let $S$ be such a subset.

Suppose now that $S$ is a subset of $V$ with $|S|=o(n)$.
By contracting $S$ to single vertex
$\g=\g(S)$, we form a graph $H=H(S)$ in which the set $S$ is replaced by
$\g$ and the edges that were contained in $S$ are contracted to loops.
The probability of no visit to $S$ in $G$ can be found (up to a multiplicative error of
$1+O(1/n^3)$) from
the probability of a first visit to $\g$ in $H$. This is the content of Lemma \ref{contract} below.

We can estimate the mixing time of a  random walk on $H$ as from the conductance of $G$ as follows.
Note  that the conductance of $H$ is at least that of $G$.
As some subsets of vertices of $V$ have been removed by the contraction of $S$,  the set of
values that we minimise  over, to calculate the conductance of  $H$, (see \eqref{conduck}),
 is a subset of the set of values that we minimise over for $G$.
It follows that the {conductance of $H$ is bounded below by the conductance of $G$.
Assuming that the conductance of $G$ is constant, which is the case in this paper,
then using \eqref{mix0}, \eqref{mix},
we see that the } mixing time for $W$ in $H$ is  $O(\log n)$.

Say that the stationary distribution $\pi_G$ of the walk in $G$ and  $\pi_H$ of the walk in $H$ are {\em compatible} if $\pi_H (\g(S))=\sum_{v \in S}\pi_G(v)$
and for $w \not \in S$, $\pi_G(w)=\pi_H(w)$.
For example, if $G$ is an undirected graph then
the stationary distributions are always compatible, because
the stationary distribution of $\g(S)$ is given by $\pi_H(\g(S))=d(S)/2m=\sum_{v \in S} \pi_G(v)$.
If $G$ is directed, compatibility does not follow automatically, and needs to be checked.

\begin{lemma}\label{contract}\cite{CFGiant}
Let $W_u$ be a random walk in $G$ starting at $u \not \in S$,
and let $\cX_u$ be a random walk in
$H$ starting at $u \ne \g$.
Let $T$ be a mixing time satisfying \eqref{4} in both $G$ and $H$. Then
provided $\pi_G$ and $\pi_H$ are compatible,
\[
\Pr(\cA_{\g}(t);H)=\Pr(\wedge_{v \in S} \cA_{v}(t);G)\brac{1 +O\bfrac{1}{n^3}},
\]
where the probabilities are those derived from the walk in the given graph.
\end{lemma}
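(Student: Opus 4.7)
The plan is to use a coupling of the two walks that makes the events $\cA_\g(t)$ in $H$ and $\wedge_{v\in S}\cA_v(t)$ in $G$ coincide on a very high-probability sample space, and then absorb a small discrepancy coming from the near-stationary distribution at the start time $T$. The natural coupling runs as follows. For $v\notin S$, both the transition structure of $G$ out of $v$ and of $H$ out of $v$ are identical once we identify $S$ with $\g$: in $H$ the edges from $v$ into $S$ become (possibly multiple) edges from $v$ to $\g$, and degrees are preserved. So from a common starting vertex $v\notin S$ we can couple $W$ and $\cX$ step-by-step: pick the same random edge incident to $v$, and if it leads to $w\notin S$ both walks jump to $w$, while if it leads to $w\in S$ then $W$ jumps to $w$ and $\cX$ jumps to $\g$. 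Under this coupling, the first time $W$ enters $S$ is exactly the first time $\cX$ reaches $\g$.

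Next I would condition on the position at the mixing time $T$, since the events in question only count visits at steps $\ge T$. By the strong Markov property,
\[
\Pr(\wedge_{v\in S}\cA_v(t);G)=\sum_{w\notin S}P_u^T(w)\,\Pr(W_w\text{ avoids }S\text{ in }[0,t-T];G),
\]
and analogously for $H$ with $P$ replaced by the transition matrix $Q$ of $\cX$. By the previous paragraph the summands in each formula corresponding to the same $w\notin S\cup\{\g\}$ are equal. The mixing-time assumption \eqref{4} applied in both graphs gives $|P_u^T(w)-\pi_G(w)|\le 1/n^4$ and $|Q_u^T(w)-\pi_H(w)|\le 1/n^4$, and by compatibility $\pi_G(w)=\pi_H(w)$ for $w\notin S\cup\{\g\}$. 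Hence $|P_u^T(w)-Q_u^T(w)|\le 2/n^4$ for all such $w$, and summing over at most $n$ vertices yields an additive discrepancy of at most $2/n^3$ between the two probabilities.

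The final step is upgrading this additive bound to the multiplicative form $(1+O(1/n^3))$. In all applications of Lemma \ref{contract} in this paper (through Corollary \ref{geom}) the set $S$ has bounded size and the times $t$ considered satisfy $p_{\g}t=O(\log n)$, so the unvisit probability is at worst $n^{-o(1)}$ and in the main regime is $\Omega(1)$; dividing the additive error by this lower bound yields the claimed $O(1/n^3)$ multiplicative error.

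The main obstacle in carrying this out is ensuring that the step-by-step coupling really works in the multigraph $H$ with loops and possible parallel edges after contraction; one has to check carefully that the transition probabilities in $H$ from a vertex $v\notin S$ match those in $G$ (including the weighting of parallel edges into $\g$), and that $\pi_H(\g)=\sum_{v\in S}\pi_G(v)$ is indeed the condition that makes the argument work when also starting the chain near stationarity. For undirected $G$ compatibility is automatic; for the directed setting used in the non-backtracking walk one has to verify this separately, which is why the hypothesis appears explicitly in the lemma.
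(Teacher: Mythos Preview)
Your approach is essentially the paper's: condition on the position at time $T$, use the mixing bound \eqref{4} and compatibility of $\pi_G,\pi_H$ to equate the weights, and use a coupling (what the paper calls ``a natural measure preserving map $\phi$'') to identify walks from $x\notin S$ avoiding $S$ in $G$ with walks from $x\neq\gamma$ avoiding $\gamma$ in $H$.

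The one place where your argument is weaker than the paper's is the error analysis. You read \eqref{4} as the additive bound $|P_u^T(w)-\pi(w)|\le 1/n^4$, sum to get an additive $O(1/n^3)$ discrepancy, and only then upgrade to a multiplicative $(1+O(1/n^3))$ by invoking a lower bound on the unvisit probability that holds \emph{in the applications}. That last step is not a consequence of the lemma's hypotheses, so as written you have proved an additive version of the lemma rather than the stated multiplicative one. The paper avoids this by using \eqref{4} multiplicatively from the start: since the right-hand side of \eqref{4} is $\min_x\pi_x/n^3\le \pi_w/n^3$, one has $P_u^T(w;H)=\pi_H(w)(1+O(n^{-3}))=\pi_G(w)(1+O(n^{-3}))=P_u^T(w;G)(1+O(n^{-3}))$ for every $w\neq\gamma$, and the $(1+O(n^{-3}))$ factor pulls out of the sum directly. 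If you make this small change your proof matches the paper's and establishes the lemma as stated, without appealing to properties of the specific $t$'s used later.
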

\proofstart
Let $W_x(j)$ (resp. $X_x(j)$)  be the position of walk $W_x$ (resp. $\cX_x(j)$)
 at step $j$.
Let $\G=G,H$ and let  $P_u^s(x;\G)$ be the
transition probability in $\G$, for the walk to go from $u$ to $x$ in $s$ steps.
\begin{eqnarray}
\Pr(\cA_{\g}(t);H)&=&\sum_{x\ne \g}P^{T}_{u}(x;H)\;
\Pr(X_x(s-T)\neq \g,\; T\leq s\leq t; H)
\nonumber\\
&=&\sum_{x\ne \g}\brac{\pi_H(x)(1+O(n^{-3}))}
\Pr(X_x(s-T)\neq \g,\; T\leq s\leq t; H)\label{close}\\
&=&\sum_{x\ne \g}\brac{\pi_G(x)(1+O(n^{-3}))}
\Pr(X_x(s-T)\neq \g,\; T\leq s\leq t; H)\label{cclose}\\
&=& \sum_{x \not \in S}
\brac{P^{T}_{u}(x;G)(1+O(n^{-3}))} \Pr(W_x(s-T)\not \in  S,\;T\leq s\leq t;G)\label{extra1}\\
&=& \Pr(\wedge_{v \in S} \cA_{v}(t);G)(1 +O(1/n^3))\nonumber.
\end{eqnarray}
Equation \eqref{close} follows from \eqref{4}. Equation \eqref{cclose} from compatibility of $\pi_H$ and $\pi_G$.
Equation \eqref{extra1} follows because there is a natural measure preserving
map $\f$ between walks in $G$ that start at $x\not \in S$ and avoid $S$ and walks
in $H$ that start at $x \ne \g$ and  avoid $\g$.
\proofend

\section{Simple random walk. Proof of Theorem \ref{SRW}.}
\label{proofs}

\subsection{Degree sequence of the vacant net}\label{degseq}

We need some definitions.
For any edge $e$ of $G$, we say $e$ is {\em red at $t$} if the walk made no transition  along $e$ during $[T,t]$.
If $e$ is a red edge,
we  say $e$ is {\em unvisited} at $t$, (i.e. unvisited between $T$ and $t$).
For any vertex $v$, we assume there is a labeling
 $e_1(v),...,e_r(v)$ of the edges incident with vertex $v$.
 Sometimes we   write $e(v)$
 for a particular edge incident with $v$.
If $v$ has exactly $s$ red edges at $t$,
we say the {\em red degree} of $v$ is $s$, and write $d_R(v,t)=s$.
Recall  that if a vertex $v$ is {\em nice} ($v \in \cN$),  then it is tree-like to
depth least $\ell_1=\e_1\log_rn$.

\begin{lemma}\label{lemsip}
For $\ell=1,...,r$, let
\beq{as}
\a_\ell= \frac{\ell}{r} \brac{2-\brac{\frac{1}{r-1}+ \frac{\ell (r-1)}{r(r-2)+\ell}}}.
\eeq
For $u \in \cN$, let $e_1,...,e_\ell$ be a set of edges incident with $u$. Let
\beq{Pell}
\ol P(u,\ell,t)= \Pr(\text{edges } e_1, \cdots, e_{\ell} \text{ are red at } t ),
\eeq
then
\beq{Qvst}
\ol P(u, \ell,t) =  \exp \brac{-\a_\ell \frac{t}{n} \; \ooi}.
\eeq

\end{lemma}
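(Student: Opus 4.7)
\textbf{Proof plan for Lemma \ref{lemsip}.}
My plan is to convert the ``edges $e_1,\ldots,e_\ell$ red at $t$'' event into a vertex-unvisit event for an auxiliary Markov chain, apply Corollary \ref{geom} to that chain, and then compute the two ingredients $\pi_\gamma$ and $R_\gamma$ using the tree-like local structure around the nice vertex $u$.

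The first step is to recast the problem using the directed-edge Markov chain $\cM$ on the $rn$ element set $E^\to$ of directed edges of $G$, with transition rule $(u,v)\to(v,w)$ of probability $1/r$ for each neighbour $w$ of $v$; this chain records the directed-edge trajectory of the simple random walk $W$ and has uniform stationary distribution $\pi^\to=1/(rn)$. Let $w_i$ be the far endpoint of $e_i$ and
\[
\vec S=\{(u,w_i),(w_i,u):i=1,\dots,\ell\}.
\]
Then the event in \eqref{Pell} is exactly the event that $\cM$ avoids $\vec S$ during $[T,t]$. Contracting $\vec S$ to a single super-state $\gamma$ gives a quotient chain $\cM'$ whose stationary distribution is compatible with that of $\cM$ in the sense of Lemma \ref{contract}, so it suffices to estimate $\Pr(\cA_\gamma(t);\cM')$. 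The chain $\cM'$ inherits constant eigenvalue gap/conductance from the walk on the nice graph $G_r$, so $T(\cM')=O(\log n)$ and the hypotheses of Corollary \ref{geom} hold.

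The second step is to apply Corollary \ref{geom}, yielding
\[
\ol P(u,\ell,t)\;=\;(1+p_\gamma)^{-t}\,(1+o(1))\;=\;\exp\!\brac{-\,\frac{\pi_\gamma}{R_\gamma}\,t\,(1+o(1))},
\]
with $\pi_\gamma = 2\ell/(rn)$ (sum of $2\ell$ uniform weights) and $R_\gamma$ the expected number of returns of $\cM'$ to $\gamma$ during time $T$. Because $u$ is nice, $D_{\ell_1}(u)$ is an $r$-regular tree, and by Lemma \ref{lemRv} I may replace the return count in $\cM'$ by the analogous quantity on the infinite $r$-regular tree rooted at $u$, at the cost of a factor $1+o(1)$. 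I will set up a small system of first-return equations on the tree: conditioning on which directed edge of $\vec S$ is used to re-enter $\gamma$ and on whether the chain, once back at $u$, leaves via one of the $\ell$ special edges or one of the $r-\ell$ non-special edges, I will express the first-return probability in terms of the familiar tree-return probability $1/(r-1)$ used in the vacant-set analysis of \cite{CFVac}. Solving the resulting linear/quadratic system and substituting gives $\pi_\gamma/R_\gamma=\a_\ell/n$ with $\a_\ell$ as in \eqref{as}. Two endpoint sanity checks keep the algebra honest: at $\ell=r$ the rate reduces to $(r-2)/(r-1)$, matching \eqref{Nt} (since traversing \emph{some} edge at $u$ is the same as visiting $u$), and at $\ell=1$ it reduces to $2(r-2)/(r(r-1))$, matching \eqref{Ut}.

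The main obstacle is the computation of $R_\gamma$. Unlike the single-vertex return analysis, here the chain lives on directed edges and ``returns to $\vec S$'' can happen in several distinct ways (re-enter $u$ through any of the $\ell$ special edges from the $w_i$-side, or leave $u$ through any of them from the $u$-side), and the two kinds of entries interact through the mixing at $u$ between special and non-special edges. This forces a case split and a small coupled recursion, whose solution is what produces the characteristic denominator $r(r-2)+\ell$ in the second fraction of \eqref{as}; once the formula for $R_\gamma$ is in hand, the exponent $\a_\ell t/n$ and hence \eqref{Qvst} follows by taking $-\log(1+p_\gamma)=p_\gamma(1+o(1))$.
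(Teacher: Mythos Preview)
Your approach is sound in outline but takes a different route from the paper. The paper stays entirely in the undirected world: it subdivides each $e_i=(u,v_i)$ by inserting a midpoint vertex $w_i$, contracts $\{w_1,\dots,w_\ell\}$ to a single vertex $\gamma(\ell)$ in the resulting undirected multigraph $H$, and applies Corollary~\ref{geom} to the ordinary simple random walk on $H$. This buys two simplifications you are giving up. First, compatibility of stationary distributions in Lemma~\ref{contract} is automatic for undirected contractions (degrees add, so $\pi_H(\gamma)=\sum\pi_G(w_i)$), whereas for your directed-edge chain $\cM'$ it is not automatic and must be verified by solving $\wt\pi=\wt\pi\wt P$ explicitly, as the paper does in Section~\ref{noback} for the non-backtracking chain. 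Second, the mixing-time bound for $H$ follows from the one-line observation that undirected contraction cannot decrease conductance; for your directed $\cM'$ you would need an argument like Lemma~\ref{snots}. Finally, the return computation in the paper is a clean two-case split: from $\gamma$ the walk moves with probability $\tfrac12$ to some $v_i$ (return probability $\phi=1/(r-1)$) and with probability $\tfrac12$ to $u$ (from which it returns to $\gamma$ with probability $\ell/r$ at each visit to $u$), giving
\[
f_\gamma=\tfrac12\Bigl(\phi+\frac{\ell}{r-(r-\ell)\phi}\Bigr)
\]
directly, with no coupled recursion needed. Your directed-edge formulation is essentially the machinery the paper reserves for the non-backtracking walk; it will produce the same $\a_\ell$, but at the cost of the extra verifications above. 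Your endpoint sanity checks at $\ell=1$ and $\ell=r$ are correct and worth keeping.
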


\begin{proof}
Let $S=\{e_1,...,e_\ell\}$ be a set of edges incident with a nice vertex $u$
of the graph $G$. To prove \eqref{Pell} we need to apply the results of Lemma
\ref{First} and Corollary \ref{geom} to the set $S$. As $S$ is not a vertex
the results of Corollary \ref{geom} do not apply directly, but we can get round this.
We define a graph $H$ with distinguished vertex $\g(\ell)$,
 obtained by modifying the structure of $S$ in $G$ in way detailed below, which we call
 {\em subdivide-contract}.
The graph $H$ is obtained as follows:
\\
(i) Subdivide the edges $e_i=(u,v_i), \; i=1,...,\ell$ incident with vertex $u$
into $(u,w_i),\;(w_i,v_i)$ by inserting a vertex $w_i$.\\
(ii) Contract $\set{w_1,...,w_{\ell}}$ to a vertex $\g(\ell)$, keeping the parallel edges that are created, and let $H$
be the resulting multigraph obtained from $G$ by this process.

We apply Corollary \ref{geom} to $H$ with $v=\g(\ell)$.
 Let $W_x$ be a walk in $H$ starting from vertex $x$.
Let $p_{\g(\ell)} \sim
 \pi_{\g(\ell)}/R_{\g(\ell)}$
 as given in \eqref{pv}. Here $\pi_{\g(\ell)}$ is the stationary probability of
 $\g(\ell)$ and $R_{\g(\ell)}$ is given by \eqref{Rv}.
For $t\geq \log^3n$ let $\cA_{\g(\ell)}(t)$ be
the event that $W_x$ does not visit $\g(\ell)$ at steps $T,T+1,\ldots,t$. Then
from \eqref{atv1}
\beq{PAW}
\Pr_W(\cA_{\g(\ell)}(t))
=\frac{(1+O(T\pi_\g))}{(1+p_{\g(\ell)})^t}.
\eeq
We next prove that
\beq{pig}
p_{\g(\ell)}  =\ooi \frac{\a_\ell}{n},
\eeq
where $\a_\ell$ is given by \eqref{as}.
The first step is to obtain $\pi_{\g(\ell)}$ and $R_{\g(\ell)}$.
 By direct calculation
 \beq{piee}
\pi_\g(\ell) = \frac{2 \ell}{rn+2\ell}.
\eeq
We next prove that $R_{\g(\ell)}=\ooi 1/ (1-f_\g)$, where $\g=\g(\ell)$, and
\beq{eff}
f_\g= { \frac12}\brac{\frac{1}{r-1}+\frac{\ell(r-1)}{r(r-2)+\ell}}.
\eeq
Before we inserted $w_1,...,w_\ell$ into $S$ and contracted them to $\g$, the vertex $u$ was tree-like to depth $\ell_1$.
Let $D(u)=D_{\ell_1}(u)$ be the subgraph of $G$ induced by the vertices at distance at most $\ell_1$ from $u$. Let $\cT_u$ be an infinite $r$-regular tree rooted at $u$.
Thus $D(u)$ can be regarded as the subgraph of $\cT_u$ induced by the vertices
at distance at most $\ell_1$ from $u$. In this way we extend $D(u)$ to an infinite $r$-regular tree.
Let $D'$ be the corresponding subgraph in $H$, and let $\cT'_u$ be the
corresponding infinite graph. Apart from $\g(\ell)$ which has degree $2 \ell$ and
$\ell$ parallel edges between $\g(\ell)$ and $u$, the graph $\cT'_u$ has the same $r$-regular structure as $\cT_u$.

\ignore{
In $H$, vertex $u$ has $\ell$ edges to $\g$, and $r-\ell$ edges to
distinct vertices $x_1,...,x_{r-\ell}$  other than $\g$.
Each of the vertices $z\in \{x_i,v_j,i=1,...,r-\ell, j=1,...\ell\}$
has $r-1$ { edges} other than the one incident with $\g$ or $u$.
In $\cT'_u$, each of these $r-1$  edges of $z$ extends
   to an infinite $r$-regular tree ${\cT_{z}}$ which is a sub-tree of $\cT'_u$.
}

Let $\cal T$ be an infinite $r$-regular tree rooted at a fixed vertex $v$ of arbitrary positive degree $d(v)$.  Lemma \ref{lemRv} proves that the probability $\f$ of a first return to $v$ in $\cT$  is given by $\f=1/(r-1)$.
Let $f_\g$ be the probability of   a first return to $\g$ in $\cT'_u$.
With probability $1/2$ a walk starting at $\g$  passes to one of $v_1,...,v_\ell$ in which case the probability of a return to $\g$ is $\f=1/(r-1)$.
With probability $1/2$ the walk
 passes from $\g$ to $u$ from whence it returns to $\g$ with probability $\ell/r$ at each visit to $u$. If the walk exits to a neighbour of $u$ other than $\g$
the probability of a return to $u$ is $\f=1/(r-1)$.
Thus in $\cT'_u$, a first return to $\g$ has probability
 \begin{align*}
 f_\g= & { \frac12}\brac{ \f + \frac{\ell}{r} \sum_{k \ge 0} \brac{\bfrac{r-\ell}{r}\f}^k}\\
 =&{ \frac12}\brac{ \f + \frac{\ell
 }{r-(r-\ell)\f}}.
 \end{align*}
This establishes \eqref{eff}.
It  follows from  Lemma \ref{lemRv} that
the value of $R_{\g(\ell)}=\ooi 1/ (1-f_\g)$.
 Combining \eqref{piee} and \eqref{eff} gives the value of $p_{\g(\ell)}$ in \eqref{pig} where $\a_\ell$ is \eqref{as}.

The last step is to get back from the walk in $H$ to the walk in $G$.
 By Lemma \ref{contract}, the event that $\g(\ell)$ is unvisited  at
 steps $T,...,t$ of a random walk in $H$, has the same asymptotic
probability as the event \eqref{Pell} in $G$ that there is no transition along the edge set $\{e_i=(u,v_i), \; i=1,...,\ell\}$
during steps $T,...,t$ of a random walk in $G$.
This, and   \eqref{PAW} gives
\[
\ol P(u,\ell, t)= \ooi \Pr_W(\cA_{\g(\ell)}(t))
=\frac{(1+o(1))}{(1+p_{\g(\ell)})^t}=\ooi e^{-tp_{\g(\ell)}(1+O(p_{\g(\ell)}))}.
\]
This,  along with \eqref{pig}
  completes the proof of the lemma.
\end{proof}

Let $d_R(v,t)$ be the red degree of vertex $v$ at step $t$ and let
 $S(v,s,t)={d_R(v,t) \choose s}$ be
 the number of $s$-subsets of red edges incident with vertex $v$ at step $t$.
Let $M(s,t)$ be given by
\[
M(s,t)= \sum_{v \in \cN} S(v,s,t).
\]
Thus $M(s,t)$ enumerates sets of incident red edges of size $s$ over nice vertices.

Recall that we have defined an edge to be red if it is unvisited in $T,...,t$.
By definition,  all edges  start red at step $T$.
For $t \ge T$,
the random variable $M(s,t)$ is monotone non-increasing in $t$. For any $s \ge 1$ there
will be some step $t(s)$ at which $M(s,t(s))=0$.

\begin{lemma}\label{th4}
Let $\a_s$ be given by \eqref{as}. The following results hold w.h.p.,
\begin{enumerate}[(i)]
\item
\beq{Mst}
\E M(s,t)= \ooi n {r \choose s} \exp \brac{- p_{ \g(\ell)}}
=
\ooi n {r \choose s} \exp \brac{-\ooi\a_s \frac{t}{n}}.
\eeq
\item
For $s \ge 1$ let $t_s= ( n \log n)/\a_s$.
The values $t_s$ satisfy $t_r < t_{r-1} < \cdots < t_1$.\\
Let $\om=\e \log n$. For $t<t_s-\om n$, $\E M(s,t) \rai $
whereas for $t>t_s+\om n$, $\E M(s,t)=o(1)$. \\
For { $t=O(n)$}, $|\ol \cN| =o(\E M(s,t))$.
\item
For all $0 \le t \le t_s-\om n$,   the value of $M(s,t)$ is concentrated within
$ \ooi \E M(s,t)$.
\end{enumerate}
\end{lemma}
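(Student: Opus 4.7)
The plan is to establish the three parts in order. Parts (i) and (ii) reduce quickly to Lemma \ref{lemsip} and elementary estimates, while part (iii) requires a second moment argument that is the main technical task.

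For (i), I would decompose $S(v,s,t) = \sum_{F} \ind{F \text{ all red at } t}$, where $F$ ranges over the $\binom{r}{s}$ $s$-subsets of edges at $v$. Taking expectations and applying Lemma \ref{lemsip}, each term equals $\ol P(v,s,t) = \exp(-\alpha_s (t/n)(1+o(1)))$. The key point is that the subdivide-contract construction at a nice vertex produces a vertex $\gamma(S)$ whose local structure depends only on $|S|=s$, so $\ol P(v,s,t)$ is independent of the specific choice of $F$. Summing over the $|\cN| = n(1-o(1))$ nice vertices (using \eqref{nice}) yields \eqref{Mst}.

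For (ii), the ordering $\alpha_1 < \alpha_2 < \cdots < \alpha_r$ (equivalently $t_r < \cdots < t_1$) is a direct algebraic check on \eqref{as}: writing $\alpha_\ell = \ell A - \ell^2 B/(C+\ell)$ with $A=(2r-3)/(r(r-1))$, $B=(r-1)/r$, $C=r(r-2)$, one verifies by differentiation in $\ell$, or by comparing $\alpha_{\ell+1}-\alpha_\ell$ directly, that $\alpha_\ell$ is strictly increasing. Substituting $t=t_s\mp\om n$ into \eqref{Mst} gives $\E M(s,t)\sim \binom{r}{s} e^{\pm\alpha_s \om}$, which diverges or vanishes as $\om=\e\log n\rai$. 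Property \eqref{nice} gives $|\bcN|\leq n^{2\e_1}$, and for $t=O(n)$ part (i) yields $\E M(s,t)=\Theta(n)$, so $|\bcN|=o(\E M(s,t))$ as required.

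For (iii) I would use Chebyshev via $\Var M(s,t) = \sum_{v,w\in\cN}\bigl(\E[S(v,s,t)S(w,s,t)] - \E S(v,s,t)\,\E S(w,s,t)\bigr)$. The diagonal $v=w$ contributes $O(\E M(s,t))$, which is $o(\E M(s,t)^2)$ provided $\E M(s,t)\rai$, i.e.\ $t\leq t_s-\om n$. Pairs at graph distance at most $2\ell_1$ number $O(n\cdot r^{2\ell_1})=O(n^{1+2\e_1})$ by BFS counting, and each contributes at most $\binom{r}{s}^2$, which is negligible against $(\E M(s,t))^2=\Omega(n^2)$ in this range. For pairs at distance exceeding $2\ell_1$, the neighborhoods $D_{\ell_1}(v)$ and $D_{\ell_1}(w)$ are disjoint trees (by niceness and \eqref{nice1}); the subdivide-contract procedure of Lemma \ref{lemsip} applied simultaneously at both vertices produces a merged vertex $\gamma_{v,w}$ with $\pi_{\gamma_{v,w}}=\pi_{\gamma_v}+\pi_{\gamma_w}$ and, by decoupling of the return walk on the extended tree $\cT'_{v,w}$, $R_{\gamma_{v,w}}\sim R_{\gamma_v}\sim R_{\gamma_w}$, so $p_{\gamma_{v,w}}\sim p_{\gamma_v}+p_{\gamma_w}$. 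Corollary \ref{geom} then yields $\Pr(\cA_{\gamma_{v,w}}(t))\sim \Pr(\cA_{\gamma_v}(t))\Pr(\cA_{\gamma_w}(t))$, forcing the covariance per distant pair to be $o(\E S(v,s,t)\,\E S(w,s,t))$ and the total off-diagonal sum to be $o((\E M(s,t))^2)$. The main obstacle will be making the decoupling of $R_{\gamma_{v,w}}$ precise, namely that first-return excursions on $\cT'_{v,w}$ almost never cross from the $v$-side to the $w$-side within the mixing time $T=O(\log n)$, which is an estimate analogous to the first-return analysis underlying Lemma \ref{lemRv} but carried out for the joint contracted structure.
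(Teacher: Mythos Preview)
Your approach to parts (i) and (ii) is essentially the paper's, and is fine. For part (iii) the overall Chebyshev-plus-decoupling strategy also matches the paper, but there are two genuine gaps.

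First, your near-diagonal bound is too crude. You write that close pairs contribute $O(n^{1+2\e_1})$ and that this is negligible against $(\E M(s,t))^2=\Omega(n^2)$. But $(\E M(s,t))^2=\Omega(n^2)$ only holds for $t=O(n)$; near the top of the range $t\le t_s-\om n$ one has merely $\E M(s,t)=\Omega(n^{\e\a_s})$, which can be far smaller than $n$. The paper fixes this by (a) bounding each close-pair term by $\Pr(\cE_v)$ rather than by $1$, so the close contribution is $r^{\om'}\E Z$ not $r^{\om'}n$, and (b) using a tunable distance threshold $\om'=\d\log n$ with $\d\ll\e$ rather than the fixed $2\ell_1$; this makes $r^{\om'}/\E Z=o(1)$ for all $t\le t_s-\om n$. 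Your fixed choice $2\ell_1=2\e_1\log_r n$ forces $\e>2\e_1$, which is not part of the hypotheses.

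Second, and more seriously, you have not addressed the \emph{uniformity in $t$}. The statement asserts concentration for \emph{all} $0\le t\le t_s-\om n$, not for a single fixed $t$. Chebyshev at a fixed step gives only a $1-O(n^{-\d_1})$ guarantee, and a union bound over $\Theta(n\log n)$ steps fails. The paper handles this by interpolating at $n^{\d_1/2}$ evenly spaced steps, applying Chebyshev at each, and using the deterministic monotonicity of $M(s,t)$ in $t$ to control the intermediate values. Without this (or an equivalent device) your argument proves pointwise concentration only.
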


\begin{proof}
{\em (i), (ii).}
The value of $\E M(s,t)$ follows  from \eqref{Qvst} by linearity of
expectation, and the fact that $|\cN|= (1-o(1)) n$. Thus
\beq{EMst}
\E M(s,t)= \sum_{u \in \cN} {r \choose s} \ol P(u, s,t) =\ooi n {r \choose s}
e^{-  \a_s \frac tn \sooi}.
\eeq
For $t \le t_s-\om n$, $\E M(s,t) =\Om (n^{\e})$.

The function $\a_s$ is { strictly} monotone increasing in $s$.
For $r \ge 3$, the derivative $d \a(x)/dx$ is positive for $x \in [0,r)$,
and zero at $x=r$.
Thus the values $t_s$ satisfy $t_i<t_j$ if $i >j$.

{\em Proof of (iii).}
Fix $s,t$ where $s =1,...,r$, and $t \le t_s-\om n$.
We use the Chebyshev inequality to prove concentration of $Z=M(s,t)$.
Suppose that $\d \ll \e$, and $\om'(n)=\d \log n$, then
\beq{omdash}
\log\log n\ll \om'=\om'(n)= \d \log n \ll \om =\e \log n.
\eeq
We first show that
\beq{cheby}
\Var(Z)=\E Z+O(r^{\om'}\E Z)+e^{-a\om'}(\E Z)^2,
\eeq
for some constant $a>0$.

Let $v, w \in\cN$. Let $Q_s(v)=\{e_1(v),...,e_s(v)\}$ be a set of edges incident with $v$,
and let $Q_s(w)=\{f_1(w),...,f_s(w)\}$ be a set of edges incident with $w$. Let $\cE_v=\cE(Q_s(v))$
be the event that the edges in $Q_s(v)$ are red at $t$.
Similarly, let  $\cE_w=\cE(Q_s(w))$ be the event
that  the $Q_s(w)$ edges are red at $t$.

Let $v,w$ be at distance at least
$\om'$ apart then we claim that
\beq{var}
\Pr(\cE_v\cap \cE_{w})=(1+e^{-\Omega(\om')})\;\Pr(\cE_v)\Pr(\cE_{w}).
\eeq
To prove this we use the  same method as  Lemma \ref{lemsip}. That is to say,
we use Corollary \ref{geom} to find the unvisit probability of a vertex
$\g$ that we construct from $Q_s(v) \cup Q_s(w)$ using subdivide-contract.
We carry out the subdivide-contract
process on the edges of $Q_s(v), Q_s(w)$ by inserting an extra vertex $x_i$ into $e_i$
and an extra vertex $y_i$ into $f_i$, and contracting $S=\{x_1,...,x_s,y_1,...,y_s\}$ to $\g(S)$.

For the
random walk on the associated graph $H=H(\g(S))$ we have that
$p_{\g(S)}$ in \eqref{pv} is given by $p_{\g(S)} \sim \pi_{\g(S)}/R_{\g(S)}$,
where
\[
\pi_{\g(S)}= \frac{4s }{rn+4s} .
\]
By Lemma \ref{Rv} we can write $1/R_{\g(S)}=\ooi(1-f_{\g(S)})$.
We next prove that the value of $f_{\g(S)}$ is given by
\[
f_{\g(S)}=\frac{1}{2}\brac{f_{\g(S_x)} +f_{\g(S_y)}+O(f^*)}.
\]
In this expression, $f^*$ is an error term defined below, and $\g(S_x), \g(S_y)$ are the contractions of
$S_x=\{x_1,...,x_s\}$, and  $S_y=\{y_1,...,y_{s}\}$ respectively, as obtained
in Lemma \ref{lemsip} and (e.g.) $f_{\g(S_x)}$
is evaluated in $H(\g(S_x)$.
Indeed, with probability $1/2$, the first move from $\g(S)$ will be to a vertex $u$
which is a neighbour of one of $S_x=\{x_1,...,x_s\}$ on the the subdivided edges $e_1,..,e_s$.
Assume it is to a neighbour of $S_x$. The probability of a first return directly to $\g(S_x)$ will be
 $f_{\g(S_x)}=\ooi f$  as given by Lemma \ref{lemsip}.

The $O(f^*)$ term is a correction for the probability that a walk staring from $\g(S_x)$
makes a transition across any of the edges in $Q_s(w)$ during the mixing time.
This event is not counted as a return in walks on $H(\g(S_x))$ but would be in $H(\g(S))$.
However, because $v$ and $w$ are at distance at least $\om'$, using \eqref{feq},
 the probability $f^*$ of a visit to $Q_s(w)$ during $T$
can be bounded by $T(n^{-1}+\l_{\max}^{\om'})$.
Thus
\beq{32}
p_{\g(S)}= (1+O(1/n)+O(Te^{-\Om(\om')})\;\; (p_{\g(S_x)}+p_{\g(S_y)}).
\eeq
Equation \eqref{var} follows on using equation \eqref{32}, Corollary \ref{geom}
with $p_{\g(S)}, p_{\g(S_x)}$ and $ p_{\g(S_y)}$ followed by
Lemma  \ref{contract}.
This confirms \eqref{var} and gives
\[
\Pr(\cE_v { \cap} \cE_w)=(1+e^{-\Om(\om')})\Pr(\cE_v) \Pr( \cE_w)=\ooi  \ol P(v,s,t) \ol P(w, s,t),
\]
where $\ol P(v,s,t)$ is given by \eqref{Qvst} in Lemma \ref{lemsip}.

Summing over $v,w \in \cN$ and  edge sets $Q_s(v), Q_s(w)$ incident with $v,w$ respectively,
\begin{align*}
\E(Z^2(t))&=\E Z+\sum_{\substack{v,w \\ Q_s(v), \;
Q_s(w)\\dist(v,w)\geq \om'}}\Pr(\cE_v\cap \cE_w)+
\sum_{\substack{v,w \\ Q_s(v),\; Q_s(w)\\dist(v,w)< \om'}}\Pr(\cE_v\cap \cE_w)\\
&\leq \E Z+(1+e^{-a\om'})(\E Z)^2+r^{\om'}\E(Z)
\end{align*}
and \eqref{cheby} follows.
Applying the Chebyshev inequality we see that
\beq{toterr}
\Pr \brac{|Z-\E Z|\geq \E Z\;e^{-a\om'/3}}
\leq \frac{2r^{\om'} e^{a\om'}}{\E Z}+e^{-a\om'/3}.
\eeq
When $t \le t_s-\om n$, $\E Z \ge e^{\om \a_s}/2=\Om(n^{\e})\gg n^{\d}$ and our choice of $\om'$ in \eqref{omdash}
implies that we can find a $\d_1$ such that the RHS of \eqref{toterr} is $O(n^{-\d_1})=o(1)$ for such $t$.

The result \eqref{toterr} from the Chebychev inequality
is too weak to prove concentration of $M( s, t)$ directly for all of $t_s$ steps.
We copy the approach used in \cite{CFVac}, { Theorem 4(a)}. Interpolate the interval
$[0,t_s]$ at $A=n^{\d_1/2}$ integer points $s_1,...,s_A$
at distance $\s=t_s\;n^{-\d_1/2}$ apart (ignoring rounding), for
  some small constant $\d_1>0$ determined by \eqref{toterr}.
The concentration at the interpolation points follows from \eqref{toterr}.
We  use the monotone non-increasing property of $M( s, t)$
to bound the value of $M(s,t)$  between $s_i$ and $s_{i+1}$.
The proof of this is identical to
the one in \cite{CFVac} and is not given in further detail here.
 \end{proof}

\subsection{Uniformity: Using random walks in the configuration model}\label{uniformity}
We
use the random walk to generate the
graph $G$ in question.
The main idea is to realize that as $G$ is a random graph,
 the graph $\G(t)$ of the vacant set or vacant net has a simple description.
Intuitively, if we condition on $\cR(t)$ and the  history
of the process, (the walk trajectory up to step $t$),
 and if $G_1,G_2$ are graphs with
vertex set $\cR(t)$ and the same degree sequence, then substituting
$G_2$ for $G_1$ will not conflict with the history.
Every extension of $G_1$ is an extension of $G_2$ and vice-versa.

\ignore{
We will describe the history of the walk in  a way that does not require us to reveal the entire graph.
Suppose we are given $Y \in [r]^t$ and, for any vertex $v$, a function $f_v$
which maps $[r]$ onto the edges incident with vertex $v$. Given an $r$--regular graph $G$ and a start vertex $u$, the sequence $Y=(Y_0,...,Y_{t-1})$ describes a walk trajectory $W_u(t)$ in $G$ as follows.
From $u$, take the edge given by $f_u(Y_0)$, and so on.

Our method of sampling u.a.r. from $\cG_r$, is to generate a random graph in the configuration model using a random walk. Because the configuration model
 assigns labeled points (edge end points, or half edges) to any
vertex $v$, this gives a natural mapping $f_v$ from $[r]$ to half edges.
We randomly pair as many half edges as the sequence $Y \in [r]^t$ needs in order to determine the walk trajectory up to step $t$.
The remaining subgraph generated by the unmatched configuration points is random.
}

We briefly and informally explain what we do.
By working in the configuration model, we can use the
random walk to generate a random  $r$--regular multigraph.
Because the configuration points (half edges) at any vertex have labels,
we can sample u.a.r. from these points to determine the next edge
transition of the walk without exposing all the edges at the vertex in the underlying
multigraph.
In this way the walk discovers the edges of the multigraph as it proceeds.
If we stop the walk at some step $t$,
the undiscovered part of the multigraph is random,
conditional on the subgraph exposed by the walk so far, and
the constraint that all vertices have degree $r$.

We use the configuration or pairing model of Bollob\'as \cite{B}, derived from a counting formula of Canfield \cite{BC}. We start with $n$ disjoint sets
of  $S_1,S_2,\ldots,S_n$ each of size $r$.
The elements of $S_v=\{v(1),...,v(r)\}$ correspond to the labeled endpoints of the half edges incident with
vertex $v$. We refer to these elements as (configuration) points.

Let $S=\bigcup_{i=1}^n S_i$.
A {\em configuration} or {\em pairing} $F$ is a partition
of $S$ into $rn/2$ pairs. Let $\Omega$ be the set of configurations.
Any $F\in\Omega$ defines
an $r$-regular multi-graph $G_F=([n],E_F)$ where $E_F=\set{(i,j):\exists
\set{x,y}\in F:x\in S_i,y\in S_j}$, i.e. we contract
$S_i$ to a vertex $i$ for $i\in [n]$.

Let $U_0=S$, $F_0=\es$. Given $U_{i-1},F_{i-1}$ we  construct $F_i$ as follows. Choose $x_i$ arbitrarily from $U_{i-1}$. Choose $y_i$ u.a.r. from $U_{i-1}\sm \{x_i\}$. Set $F_i=F_{i-1} \cup \{\{x_i,y_i\}\}, U_{i}=U_{i-1} \sm \{x_i,y_i\}$.
If we stop at step $i$, the points in $U_i$ are unpaired, and can be paired u.a.r.
The underlying
multigraph of this pairing of $U_i$ is a random multigraph in which the degree of vertex $v$ is $d(v)=|S_v \cap U_i|$.

It is known that: (i) Each simple graph arises the same number of times
as $G_F$. i.e. if $G,\;G'$  are simple, { then $|\{F:G_F=G\}|=|\{F':G_F'=G'\}|$.}
(ii) Provided  $r$ is constant, the probability
$G_F$ is simple is bounded below by a constant. Thus if $F$ is chosen
uniformly at random from $\Omega$ then any event that
occurs w.h.p. for { $F$, occurs w.h.p. for $G_F$,
and hence w.h.p. for $G_r$.}

We next explain how to use a random walk on $[n]$ to generate a random $F$,
and hence a random multigraph $G$. To do this,
we begin with
a starting vertex $u=i_0$. Suppose that at the $t$--th step
we are at some vertex $i_t$, and  have
a partition of $S$  into red and blue points, $R_t,B_t$ respectively.
Initially, $R_0=S$ and $B_0=\emptyset$.
In addition we have a collection
$F_t$ of disjoint pairs from $S$ where $F_0=\emptyset$.

At step $t+1$ we choose a random edge incident
with $i_t$.
Obviously $i_t \in \cB(t)$, as it is visited by the walk,
but we treat the configuration points in $S_{i_t}$
as blue or red, depending on whether the corresponding
edge is previously traversed (blue) or not (red).
Let $x$ be chosen randomly from $S_{i_t}$.
\ignore{
Recall that the points of $S_{i_t}$
have labels $i_t(\ell),\ell=1,...,r$, so that if $x=i_t(\ell)$ then $Y_t=\ell$.
We refer here to the construction of $Y=(Y_0,...,Y_t)$ in Lemma \ref{lem2},
and we index the edge transition $(i_t,i_{t+1})$ by the point label $\ell$ used in choosing it.
}
There are two cases of how $i_{t+1}$ is chosen.

If $x\in B_t$ then it was previously  paired with a $y\in S_j\cap B_t$,
and thus $j \in \cB(t)$.
The walk moves from $i_t$
to $i_{t+1}=j$ along an existing edge corresponding to some $\{x,y\} \in F$. We let $R_{t+1}=R_t,B_{t+1}=B_t$ and we let $F_{t+1}=F_t$.

If $x\in R_t$, then the edge is unvisited, so we choose $y$ randomly from
$R_t\setminus \set{x}$.  Suppose that $y\in S_j$.
This is equivalent to moving from
$i_t\in \cB(t)$ to $i_{t+1}=j$.
We now check vertex $j$ to see if it was previously visited.
 If $j \in \cB(t)$ this is equivalent to moving
between blue vertices on a previously unvisited edge. If $j \in \cR(t)$, this is equivalent to
moving to a previously unvisited vertex.
In either case we update as follows.
$R_{t+1}=R_t\setminus \set{x,y}$ and $B_{t+1}=B_t\cup\set{x,y}$,
and  $F_{t+1}=F_t\cup\set{\set{x,y}}$.

\ignore{
After $t$ steps we  have constructed the sequence $Y=(Y_0,...,Y_{t-1})$
 which along with $i_0$ gives the history of the process.
}
After $t$ steps we have a random pairing $F_t$ of at most
$t$ disjoint pairs from $S$.
The entries in $F_t$ consist of a known pairing of $B_t$, and constitute the
 revealed edges of the random graph. The
points in $R_t$ are still unpaired.
In principle
we can extend $F_t$ to a random configuration $F$ by adding a
random pairing of $R_t$ to it.  The vacant net, $\wh \G(t)$
is the  subgraph of $V$ induced by the edges  unvisited during steps $1,...,t$,
and is the underlying
multigraph of a u.a.r. pairing of $R_t$.
To generate $\G(t)$, the subgraph induced by
the vacant set $\cR(t)$,  we extend the pairing $F_t$ to a pairing
$F_{t'}$ by method {\bf Extend--$\cB(t)$} defined as follows.

{\bf Extend-$\cB(t)$.}
Let $S_B=\cup_{v \in \cB(t)} S_v$. Let $K=S_B \cap R_{\t}$.
For $\t \ge t$, and while $K \ne \es$
choose an arbitrary  point $x$ of $K$.
Pair $x$
with a u.a.r. point $y$ of
$R_\t-\{x\}$. Let $R_{\t}=R_\t \sm \{x,y\}$. If $y \in K$ let $K=K\sm \{x,y\}$
else let $K=K\sm \{x\}$. Set $\t=\t+1$.
Let $t'=\t$ be the first step at which $K=\es$.
Pair $R_{t'}$ u.a.r. to generate the multigraph $\G(t)$.

The next lemma summarizes this discussion.
\begin{lemma}\label{walkconfig}\
\vspace{-.3in}
\begin{enumerate}[i)]
\item The pairing $F_t$ can be generated in the configuration model by  a random walk $W_u(t)$ without exposing any pairings not in $F_t$.
    The underlying multigraph of $F_t$ gives the edges covered by the walk $W_u(t)$.
\item The pairing $F_t$ plus a u.a.r. pairing of $R_t$ is a {uniform} random member of $\Omega$.
\item The vertex $v \in V$ is in $ \cR(t)$ if and only if  $S_v\subseteq R_t$.
\item {\bf Vacant net.}
The u.a.r. pairing of $R_t$ gives the vacant net, $\wh \G(t)$ as a random multigraph with degree sequence determined by $\wh d(v)=|S_v \cap R_t|$ for $v \in V$.
Let $\wh \bd(t)$ be the degree sequence of $\wh \G(t)$.
Conditional on $\wh \G(t)$ being simple,  $\wh \G(t)$ is a u.a.r. graph with  degree sequence $\wh \bd(t)$.
\item {\bf Vacant set.}
Extend $F_t$ to $F_{t'}$ using method {\bf Extend--$\cB(t)$} described above.
The u.a.r. pairing of $R_{t'}$ gives  $\G(t)$, the induced subgraph of the vacant set, as a random multigraph with degree sequence determined by $ d(v)=|S_v \cap R_{t'}|$ for $v \in \cR(t)$. Let $\bd(t)$ be the degree sequence of $ \G(t)$.
Conditional on $\G(t)$ being simple,  $\G(t)$ is a u.a.r. graph with  degree sequence $\bd(t)$.
\end{enumerate}
\end{lemma}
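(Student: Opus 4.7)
The proof rests on the principle of deferred decisions in the configuration model. A uniform pairing of $S$ can be generated sequentially — pick $x_1$ arbitrarily, $y_1$ uniformly from $S\setminus\{x_1\}$, then $x_2$ arbitrarily from the remaining points, and so on — and if we halt after any number of steps and complete the remainder by a uniformly random pairing of the leftover points, the result is still uniform on $\Omega$. The plan is to let the random walk itself supply the sequence of ``arbitrary'' choices, driven by which half-edge the walk picks at its current vertex.

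For parts (i)--(iii) I would argue by induction on $t$. The inductive hypothesis is that the revealed pairs $F_t$, together with a uniformly random pairing of $R_t$, form a uniform element of $\Omega$, and that $B_t$ consists of precisely those configuration points belonging to edges traversed by the walk through step $t$. At step $t+1$ the walk is at $i_t$ and chooses $x\in S_{i_t}$ uniformly. If $x\in B_t$, then $x$ is already matched to some $y\in S_j\cap B_t$ under $F_t$ and the walk moves deterministically to $j$, leaving $F,R,B$ unchanged. If $x\in R_t$, we pick $y$ uniformly from $R_t\setminus\{x\}$, which is exactly the next step of the sequential configuration construction; the inductive hypothesis is preserved with the updated $F_{t+1}, R_{t+1}, B_{t+1}$. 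In either case the only entries of the underlying pairing queried by step $t+1$ lie in $F_{t+1}$, giving (i), and the conditional uniformity of the completion gives (ii). Statement (iii) is then immediate: $v$ is unvisited iff no half-edge incident to $v$ has been used in $F_t$, i.e.\ $S_v\subseteq R_t$.

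For (iv), the unvisited edges of $\wh\G(t)$ are in bijection with the pairs formed by the random completion of $R_t$; by (ii), conditional on $F_t$ this completion is uniform on pairings of $R_t$, so $\wh\G(t)$ is a configuration-model multigraph with degree sequence $\wh d(v)=|S_v\cap R_t|$. The standard fact that, conditional on being simple, a configuration-model multigraph is uniform among simple graphs with the given degree sequence then yields the last sentence of (iv). Part (v) is analogous but requires first disposing of the leftover half-edges at visited vertices, since in a naive completion these would contribute edges between $\cB(t)$ and $\cR(t)$ or within $\cB(t)$, neither of which belong to $\G(t)$. Procedure Extend-$\cB(t)$ pairs precisely those half-edges uniformly at random, again preserving uniformity of the completion by deferred decisions; once the set $K=S_B\cap R_\tau$ is empty, every remaining point of $R_{t'}$ lies in some $S_v$ with $v\in\cR(t)$, and a uniform pairing of $R_{t'}$ produces exactly the induced subgraph $\G(t)$ with degree sequence $d(v)=|S_v\cap R_{t'}|$.

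The one place that needs care is justifying that letting the random walk drive the ``arbitrary'' choice of $x$ does not bias the distribution. This comes down to exchangeability of the half-edges at a fixed vertex under the uniform-pairing measure conditional on $F_t$: the conditional distribution on $R_t$ is invariant under permutations of the labels within any $S_v$, so picking $x\in S_{i_t}$ uniformly is equivalent to picking it in some predetermined order. The other small detail is that Extend-$\cB(t)$ terminates, which is immediate since $|K|$ strictly decreases at each step and never increases.
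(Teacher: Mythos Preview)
Your proposal is correct and follows essentially the same approach as the paper. In fact the paper does not give a separate proof of this lemma at all: it is stated as a summary of the preceding discussion in Section~\ref{uniformity}, which describes the walk-driven sequential pairing and the Extend-$\cB(t)$ procedure in almost the same terms you use, and your inductive framing together with the exchangeability remark make explicit a point the paper leaves implicit.
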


\subsection{Applying the Molloy-Reed Condition}\label{MRC}

The Molloy-Reed condition for bounded degree graphs can be  stated as follows.
\begin{theorem}\label{MR}
Let $G_{N,\bd}$ be
the graphs with
vertex set $[N]$ and degree sequence
$\bd=(d_1,d_2,\ldots,d_N)$, and endowed with the uniform measure.
Let $D(s)=|\set{j:d_j=s}|$, be the number of vertices of degree $s=0,1,\ldots,r$,
where $D(s)=(1+o(1))\l_sN$ for $s=0,1,\ldots,r$, and
 $\l_0,\l_1,\ldots,\l_r\in [0,1]$ are such that $\l_0+\l_1+\cdots+\l_r=1$.
Let
\beq{MRL}
L(\bd)=\sum_{s=0}^rs(s-2)\l_s.
\eeq
\begin{description}\label{Leq0}
\item[(a)] If $L(\bd)<0$ then w.h.p. $G_{n,\bd}$ is sub-critical.
\item[(b)] If $L(\bd)>0$ then w.h.p. $G_{n,\bd}$ is super-critical.
\end{description}
\end{theorem}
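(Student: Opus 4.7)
The plan is to work in the configuration (pairing) model and analyse components via a breadth-first exposure of pairings, which is locally coupled with a Galton--Watson branching process whose mean offspring sign is controlled by $L(\bd)$. Since each vertex has degree at most a constant, the configuration multigraph is simple with probability bounded below by a constant, and conditional on simplicity every graph with degree sequence $\bd$ is equally likely, so any w.h.p.\ event under the uniform pairing transfers to $G_{N,\bd}$.

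First I would fix a root $v_0$ and expose its component by maintaining an \emph{active} set $A$ of unmatched half-edges, initialised as the $d_{v_0}$ half-edges at $v_0$. At each step, pick any $x \in A$, pair it with a uniformly chosen unmatched point $y$, and update: if $y$ lies at a previously unreached vertex $w$ of degree $s$, add $s-1$ new half-edges to $A$; otherwise (i.e.\ $y$ was already in $A$ or at an already-reached vertex) simply remove both from $A$. When only $o(N)$ points have been exposed, the probability that $y$ lies at a fresh vertex of degree $s$ is $(1+o(1))\, s\l_s/\mu$ with $\mu=\sum_t t\l_t$, so the expected change in $|A|$ per step is $(1+o(1))\bigl((\sum_s s(s-1)\l_s)/\mu - 1\bigr) = (1+o(1))\, L(\bd)/\mu$. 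This couples the exposure with a size-biased Galton--Watson process having offspring law $\Pr(Z=s-1)=s\l_s/\mu$, whose mean is $>1$ iff $L(\bd)>0$ and $<1$ iff $L(\bd)<0$.

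For part (a), when $L(\bd)<0$ the dominating branching process is subcritical, so by standard Chernoff / large-deviation bounds on sums of the bounded offspring $Z \in \{0,\ldots,r-1\}$, the total progeny exceeds $C\log N$ with probability at most $N^{-2}$ for a suitable constant $C$. A union bound over the $N$ possible roots then yields sub-criticality w.h.p.

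For part (b), when $L(\bd)>0$, I would run the exposure for $\Theta(N)$ steps, tracked via Wormald's differential equation method applied to the vector $(n_s(t))$ counting unmatched half-edges at degree-$s$ vertices; the resulting ODE has a well-defined fluid limit from which one reads off the positive survival probability $\rho>0$ of the branching process, giving $(\rho+o(1))N$ vertices whose explorations reach size $\Omega(N)$. Uniqueness and linearity of the giant then follow by a sprinkling argument: reserve $\varepsilon N$ random pairings, expose the remainder, and use that any two large pseudo-giants are connected by at least one reserved edge w.h.p. The complementary vertices lie in explorations that died out in the subcritical regime of the remaining pairing, so all other components are $O(\log N)$ by applying the argument of (a) to the residual degree sequence. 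The main obstacle is precisely this super-critical step: passing from ``positive branching-process survival probability'' to ``a \emph{unique} $\Omega(N)$ component w.h.p.'' needs both the ODE concentration (to turn a local survival probability into a global linear count) and the sprinkling/switching step (to rule out multiple giants), which together account for the bulk of the Molloy--Reed argument.
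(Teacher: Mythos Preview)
The paper does not prove this theorem at all: Theorem~\ref{MR} is quoted as a known result of Molloy and Reed~\cite{MR1} and is used as a black box in Section~\ref{MRC}. There is therefore no ``paper's own proof'' to compare against; your proposal is an attempt to supply what the paper simply cites.

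As a sketch of an independent proof, your outline is broadly sound and follows the now-standard route (configuration model, exploration coupled to a size-biased Galton--Watson process, subcritical tail bound plus union bound for (a), and for (b) a fluid-limit/ODE argument together with a uniqueness step). A few remarks if you intend to flesh it out: the original Molloy--Reed argument does not use Wormald's differential-equation method or sprinkling in the form you describe; they run a direct exploration and track the active set more combinatorially. Your ``sprinkling by reserving $\varepsilon N$ random pairings'' is delicate in the fixed-degree-sequence setting, since pairings are not independent edges and withholding some of them changes the residual degree sequence in a coupled way---the standard device here is a switching argument or the two-round exposure of Molloy--Reed rather than Erd\H{o}s--R\'enyi-style sprinkling. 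Also, for (a) you should be a little careful that the coupling with the branching process remains a stochastic domination even when the exploration has consumed $\Theta(\log N)$ points; this is fine because degrees are bounded by $r$, but it should be said.
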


The following theorem on the scaling window is adapted from Theorem 1.1 of Hatami and Molloy
\cite{HM}, with the observation (after Theorem 3.2) from
\v{C}erny and  Teixeira \cite{CT2} that including a constant proportion of  vertices of degree zero does not modify the validity of the result.
\begin{theorem}\label{HM}
\cite{HM}
Let $G_{N,\bd}$ be
the graphs with
vertex set $[N]$ and degree sequence
$\bd=(d_1,d_2,\ldots,d_N)$, and endowed with the uniform measure.
Let $R=\sum_{u \in V} d_u(d_u-2)^2/2|E(G)|$. Assume that  $R>0$ constant, and $D(2)< N(1-\d)$ for some $\d>0$. For any $c>0, \e>0$, and
$-cN^{2/3} \le N L(\bd) \le c N^{2/3}$,
\[
\Pr(|C_1| = \Th(N^{2/3}))\ge 1-\e.
\]
\end{theorem}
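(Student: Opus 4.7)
The plan is to prove the critical-window scaling by analysing a breadth-first exploration of $G_{N,\bd}$ inside the configuration model, in the spirit of Aldous's analysis of the Erd\H{o}s--R\'enyi critical window and its subsequent extensions to prescribed degree sequences. Pair the configuration points $S=\bigcup_v S_v$ in the order dictated by BFS: start from a root vertex, mark its $d_{\text{root}}$ half-edges as open, and at each step take one open half-edge $x$ and pair it with a uniformly chosen unpaired point $y\in S_w$, marking the remaining $d_w-1$ half-edges of $w$ as open whenever $w$ is newly discovered.

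Let $X_t$ denote the number of open half-edges after $t$ pair-revealings; then $X_t$ is a non-negative integer-valued process whose excursions above its running minimum encode the connected components of $G_{N,\bd}$, so the longest excursion length equals $|C_1|$ up to negligible corrections from self-loops and back-edges within the current component. The conditional increment of $X_t$, given that $y\in S_w$ for a previously undiscovered $w$ of degree $d$, is $d-2$, while if $w$ was already discovered the increment is $-2$. A direct computation then gives per-step drift $L(\bd)/\bar d+o(N^{-1/3})$ and per-step variance $R/\bar d+o(1)$, where $\bar d=\sum_u d_u/N$; the hypothesis $D(2)<(1-\d)N$ keeps the variance bounded away from zero, $R$ being a positive constant keeps it bounded above, and $\bar d$ is trivially bounded.

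Since $NL(\bd)=O(N^{2/3})$, the per-step drift is $O(N^{-1/3})$ and the per-step variance is $\Th(1)$, so over a window of $\Th(N^{2/3})$ steps both the cumulative drift and the cumulative standard deviation are of order $N^{1/3}$. The upper bound $\Pr(|C_1|\geq AN^{2/3})\leq \e/2$ follows for large $A$ from a maximal-inequality argument on the random walk $X_t$: an excursion of length $AN^{2/3}$ forces $X_t$ to remain positive throughout that interval, an event whose probability can be made arbitrarily small in $A$ by Kolmogorov's inequality applied to the centred walk, combined with a union bound over the $O(N^{1/3})$ possible excursion starts. The matching lower bound $\Pr(|C_1|\leq aN^{2/3})\leq \e/2$ follows from a second-moment estimate on the variable $Y$ counting vertices in components of size at least $aN^{2/3}$: the first moment $\E Y$ is $\Th(N^{2/3})$ by the Aldous-type diffusion heuristic applied to the exploration started from each vertex, and $\Var Y=o(N^{4/3})$ is established by a pair-switching argument that couples the explorations rooted at two independent uniform vertices, yielding $\Pr(Y>0)\geq 1-\e/2$.

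The principal technical obstacle is that the effective degree sequence drifts during exploration: each newly revealed vertex is chosen size-biased by its degree, so high-degree vertices are depleted faster than average. One must show that within a critical window of $\Th(N^{2/3})$ exploration steps only $O(N^{2/3})$ vertices get revealed, a $o(1)$ fraction of $N$, so the residual moments $\sum_u d_u$, $\sum_u d_u^2$, $\sum_u d_u(d_u-2)^2$ all shift by at most $O(N^{-1/3})$ relative to their initial values, which keeps $L$ inside the critical window throughout. Making this quantitative via a martingale coupling of $X_t$ with a constant-drift random walk on $\mathbb{Z}$, together with a union bound over the $O(1)$ excursions one needs to track to locate the genuinely largest component rather than merely a component containing a fixed root, is the heart of the argument and follows the pattern of Hatami--Molloy \cite{HM}.
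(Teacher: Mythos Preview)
The paper does not prove this theorem at all: it is quoted from Hatami and Molloy \cite{HM} (with the degree-zero adaptation noted from \v{C}ern\'y--Teixeira \cite{CT2}) and used as a black box. There is therefore nothing in the paper to compare your argument against; the authors simply invoke the result when they need the scaling window in Theorem~\ref{SRW}(iv) and Theorem~\ref{nobac}(vi), after verifying in Lemma~\ref{mylem} that the hypotheses $R>0$ and $D(2)<N(1-\delta)$ hold for the vacant-net degree sequence.

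Your sketch is a reasonable outline of the Hatami--Molloy proof itself --- BFS exploration in the configuration model, tracking the open-half-edge process $X_t$, identifying its drift as $L(\bd)/\bar d$ and its variance as essentially $R$, then using maximal inequalities for the upper tail and a second-moment argument for the lower tail --- and that is indeed the architecture of \cite{HM}. But in the context of this paper no such reconstruction is expected or needed: the correct ``proof'' here is a citation.
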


To complete the proof of Theorem \ref{SRW} we need to evaluate $L(\bd)$ for $\wh \G(t)$ to obtain $\wh t$.
It is convenient for us to express $L(\bd)=\sum_{s=0}^rs(s-2)\l_s$ in a form which uses the results of Lemma \ref{lemsip} and Lemma \ref{th4} of Section \ref{degseq}.

\begin{lemma}\label{MMR}
Let $G=(V, E, \bd)$ be a graph with degree sequence $\bd$ of maximum degree $r$.
Let $D(s), s=0,...,r$,
be the number of vertices
of degree $s$. Let $U \seq V$ be a set of vertices, and $\ol U= V \sm U$.
Let $M_U(s)=\sum_{u \in U} {d(u) \choose s}$, and let
$R=\sum_{u \in V} d(u)(d(u)-2)^2/2|E(G)|$.
Then  $L(\ul d)$ can be written as
\beq{newLd}
L(\ul d) \cdot N= \ooi \brac{2M_U(2)-M_U(1)+O( r^2 |\ol U|)},
\eeq
and  $R$ can be written as
\beq{newR}
 R \cdot \brac{M_U(1)+O(r |\ol U|)}= \ooi \brac{ 6 M_U(3)- 2M_U(2)+M_U(1)
  + O(r^3 |\ol U|)}.
\eeq
\end{lemma}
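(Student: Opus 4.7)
The plan is to recognize that both \eqref{newLd} and \eqref{newR} are purely algebraic rewritings of standard degree-sum expressions via binomial-coefficient identities, plus a routine bound on the contribution of vertices outside $U$. No probabilistic input is needed beyond the bound $d(v)\le r$ for all $v\in V$.

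First I would tackle \eqref{newLd}. By definition $L(\bd)\cdot N=\sum_{s=0}^{r}s(s-2)D(s)=\ooi\sum_{v\in V}d(v)(d(v)-2)$, the $\ooi$ absorbing the approximation $D(s)=\ooi\lambda_s N$. The key identity is
\[
d(d-2)=d^2-2d=2\binom{d}{2}-d,
\]
which rewrites the sum as $2M_V(2)-M_V(1)$. Splitting $M_V(s)=M_U(s)+M_{\bar U}(s)$ and noting that $\binom{d(u)}{s}\le\binom{r}{s}\le r^s$ for every vertex, we get $M_{\bar U}(1)=O(r|\bar U|)$ and $M_{\bar U}(2)=O(r^2|\bar U|)$, so the $\bar U$ contribution is absorbed into the $O(r^2|\bar U|)$ error term of \eqref{newLd}.

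For \eqref{newR} I would start from $R\cdot 2|E(G)|=\sum_{v\in V}d(v)(d(v)-2)^2$ together with $2|E(G)|=\sum_{v\in V}d(v)=M_V(1)=M_U(1)+O(r|\bar U|)$, which explains the left-hand factor of \eqref{newR}. The algebraic identity I need is
\[
d(d-2)^2=d^3-4d^2+4d=6\binom{d}{3}-2\binom{d}{2}+d,
\]
obtained by substituting $d^2=2\binom{d}{2}+d$ and $d^3=6\binom{d}{3}+6\binom{d}{2}+d$. Summing over $V$ gives $6M_V(3)-2M_V(2)+M_V(1)$, and splitting again with the bounds $M_{\bar U}(s)=O(r^s|\bar U|)$ for $s=1,2,3$ yields the right-hand side of \eqref{newR} with error $O(r^3|\bar U|)$.

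There is no genuine obstacle here; the lemma is a bookkeeping step whose sole purpose is to rephrase the Molloy--Reed threshold quantity $L(\bd)$ and the Hatami--Molloy quantity $R$ in terms of the moments $M_U(s)$ that Lemma~\ref{th4} already controls (for $U=\cN$, with $|\bar U|\le n^{2\varepsilon_1}$ by \eqref{nice}). The only care needed is to verify the two binomial identities $d(d-2)=2\binom{d}{2}-d$ and $d(d-2)^2=6\binom{d}{3}-2\binom{d}{2}+d$, and to keep track that the error from $\bar U$ is $O(r^s|\bar U|)$ at level~$s$, which is negligible compared with $M_\cN(s)=\Theta(n)$ in the ranges of $t$ we use.
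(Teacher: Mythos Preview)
Your proof is correct and follows essentially the same route as the paper: rewrite $s(s-2)$ (respectively $s(s-2)^2$) as a combination of falling factorials to convert the degree sums into the moments $M_V(s)$, then split $V=U\cup\ol U$ and bound the $\ol U$ contribution by $O(r^s|\ol U|)$. In fact you give more detail than the paper for the $R$ identity, which the paper dismisses with ``The case for $R$ is similar.''
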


\begin{proof}
Let
\beq{newMR}
Q= \sum_{s=0}^r s(s-2) D(s),
\eeq
then $Q$ can be written as
\begin{eqnarray}\nonumber
Q&=& \sum_{s=0}^r s(s-1)D(s)-\sum_{s=0}^r sD(s)\\
&=& \sum_{v \in V} d(v)( d(v)-1) -\sum_{v \in V} d(v) \nonumber
\\
&=& \sum_{v \in U} d(v)( d(v)-1) -\sum_{v \in U} d(v) +
\brac{\sum_{v \not\in U} d(v)( d(v)-1) -\sum_{v \not\in U} d(v)}\nonumber\\
&=& 2M_U(2)-M_U(1)+O(r^2 |\ol U|).\label{better}\label{QQ}
\end{eqnarray}
The  case for $R$ is similar.
\end{proof}

In our proofs,
we choose $U =\cN$, the set of nice vertices.
It follows from Lemma
\ref{th4} that $r^2\ol U=o(M_U(1)+M_U(2))$.
The next lemma proves the Molloy-Reed threshold condition is
equivalent to  $M_\cN(1) \sim 2 M_\cN(2)$.

\begin{lemma}\label{mylem}
\begin{enumerate}[(i)]
\item
The asymptotic solution to $L(\bd)=0$ in \eqref{MRL} obtained at $\wh t= \ooi \th^*n$ where
\beq{thstar}
\th^*= \frac{r(r^2-2r+2)}{2(r-2)^2} \log (r-1).
\eeq
\item The assumptions of Theorem \ref{HM} are valid and the scaling window is
of order $\Th(n^{2/3})$.
\end{enumerate}
\end{lemma}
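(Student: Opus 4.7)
The plan is to turn $L(\bd)=0$ into an equation in $t$ using Lemma \ref{MMR} with $U=\cN$, and then substitute the moments from Lemma \ref{th4}. Since $|\ol \cN|=O(n^{2\e_1})$ by \eqref{nice} and $M_\cN(s)=\Th(n)$ in the regime $t=\Th(n)$ where the threshold lies, the error term $O(r^2|\ol \cN|)$ in \eqref{newLd} is negligible, so $L(\bd)=0$ is asymptotically equivalent to $M_\cN(1)\sim 2M_\cN(2)$. Substituting $M_\cN(s)\sim n\binom{r}{s}e^{-\a_s t/n}$ from \eqref{Mst} and dividing gives $(r-1)e^{(\a_1-\a_2)t/n}=1$, hence $\wh t/n=\log(r-1)/(\a_2-\a_1)$.

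The remaining work is a direct evaluation of $\a_1,\a_2$ from \eqref{as}. The pleasant identity $r(r-2)+1=(r-1)^2$ collapses the bracket in $\a_1$ to $2/(r-1)$, giving $\a_1=2(r-2)/(r(r-1))$. The parallel computation for $\a_2$, over the common denominator $(r-1)(r^2-2r+2)$, yields a cubic numerator that factors as $(r-2)(2r^2-5r+4)$; forming $\a_2-\a_1$ introduces the further factor $r^2-3r+2=(r-1)(r-2)$, and the algebra terminates at $\a_2-\a_1=2(r-2)^2/(r(r^2-2r+2))$. Inverting gives the stated $\th^*$.

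\textbf{Proof strategy for (ii).} Three hypotheses of Theorem \ref{HM} must be checked at $t=\wh t$. (a) $R>0$ constant: from \eqref{newR}, at $t=\wh t$ the combination $-2M_\cN(2)+M_\cN(1)$ is $o(M_\cN(1))$ by part (i), leaving $R\sim 6M_\cN(3)/M_\cN(1)$, a strictly positive constant depending only on $r$. (b) $D(2)<N(1-\d)$: the mean red degree at $\wh t$ equals $r\exp(-\a_1\wh t/n)$, and substituting the explicit value of $\wh t/n$ shows this lies strictly below $1$ for every $r\ge 3$; concentration (via an argument analogous to Lemma \ref{th4}(iii)) then forces the fraction of vertices of red degree $0$ to be bounded away from zero, so in particular $D(2)/N$ is bounded away from $1$.

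(c) For the scaling window, I would Taylor-expand $L(\bd)\cdot N$ around $t=\wh t$. Writing
\[
L(\bd)\cdot N\sim nr\,e^{-\a_1 t/n}\bigl[(r-1)e^{(\a_1-\a_2)t/n}-1\bigr]
\]
and setting $t=\wh t+s$, the defining equation of $\wh t$ makes the bracket vanish at $s=0$, so for $|s|=o(n)$ the bracket expands as $(\a_1-\a_2)s/n+O((s/n)^2)$ while the prefactor remains $\Th(n)$. Hence $|L(\bd)\cdot N|=\Th(|s|)$, and the scaling window condition $|L(\bd)\cdot N|\le cN^{2/3}$ is equivalent to $|s|\le c'n^{2/3}$. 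The only real obstacle is the algebraic simplification of $\a_2-\a_1$ in part (i); once $\wh t$ is pinned down, verifying the Hatami--Molloy hypotheses and the linearization of $L$ at the critical point are routine, and the various error terms from \eqref{nice}, Lemma \ref{th4}(iii), and the remainders in Lemma \ref{MMR} are all polynomially smaller than $n^{2/3}$.
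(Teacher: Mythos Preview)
Your proposal is correct and follows essentially the same route as the paper for part (i): both reduce $L(\bd)=0$ via Lemma \ref{MMR} to $M_\cN(1)\sim 2M_\cN(2)$, substitute the expressions from Lemma \ref{th4}, and solve; your explicit algebra for $\a_2-\a_1$ (exploiting $r(r-2)+1=(r-1)^2$ and the factorisation through $(r-2)$ and $(r-1)(r-2)$) is exactly what the paper suppresses when it simply writes down $M(1,t)$ and $M(2,t)$ and asserts the value of $\th^*$.

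For part (ii) you actually do more than the paper. The paper's proof checks only that $R>0$ is a positive constant, by the same argument you give in (a): at the critical point $2M(2)-M(1)=o(M(1))$, so $R\sim 6M(3)/M(1)$. Your additional items (b) and (c) --- the mean-red-degree argument forcing $D(0)/n$ bounded away from zero (hence $D(2)<N(1-\d)$), and the linearisation of $L(\bd)\cdot N$ in $t-\wh t$ to read off the $n^{2/3}$ window --- are not in the paper's proof at all, though they are implicitly required by the statement. One small caveat: your final remark that the concentration errors from Lemma \ref{th4}(iii) are ``polynomially smaller than $n^{2/3}$'' needs the constant $\d$ in $\om'=\d\log n$ chosen large enough relative to the Chebyshev constant $a$; the paper does not address this either, so you are at the same level of rigour.
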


\begin{proof}
Let $\bd$ be the degree sequence of $\wh \G(t)$,  let $\ul D$
be the degree sequence of nice vertices $\cN$, and  $\ul{\ol D}$ the degree sequence
of $\ol \cN$.
For nice vertices and any $0 \le s \le r$
we  use the notation $M(s,t)=M_{\cN}(s,t)$.
Thus using \eqref{newLd} with $U=\cN$,
\beq{nbd}
nL(\bd) \sim Q(\bd) =2M(2,t)-M(1,t)+O(r^2|\ol  \cN|)-O(T).
\eeq
Thus the condition { $L(\bd)\sim 0$} is equivalent to $Q(\ul D)/n\ra 0$.
The term $O(T)$ removes any vertices/edges visited during the mixing time $T$,
but unvisited during $T,...,t$ and hence marked red.
From \eqref{nice}, $|\ol \cN|=O(n^{\e})$.
For  nice vertices, and $t=cn$ for any $c \ge 0$ constant
gives $M(2,t)=\Th(n), M(1,t)=\Th(n)$.
Thus when $M(1,t) \sim  2M(2,t)$ then $L(\bd)\sim0$.
By Lemma \ref{th4}, $M(s,t)$ is asymptotic to \eqref{Mst}, which is
\begin{align*}
M(1,t)= & \ooi r \exp \brac{-\frac{t}{n} \frac{2(r-2)}{r(r-1)}}\\
M(2,t) = & \ooi r(r-1)/2
 \exp \brac{-\frac{t}{n} \frac{2}{r} \brac{2-\brac{\frac{1}{r-1}+\frac{2(r-1)}{r(r-2)+2}}}}.
\end{align*}
Thus $ L(\bd) \ra 0$ when $t \sim \wt t = \th^* n$ where $\th^*$ is given by \eqref{thstar}.

Regarding the expression for $R=R(t)$ in \eqref{newR}, with $U=\cN$.
\[
(M(1,t)+O(r \ol \cN)) R(t) = \ooi (6 M(3,t)- 2M(2,t)+M(1,t)) + O(r^3 \ol \cN).
\]
By Lemma \ref{th4}, for $t=cn$, any $c \ge 0$ constant, and $s=1,2,3$
we have that w.h.p. $M(s,t)=\Th(n)$. On the other hand from \eqref{nbd},
and the assumption of the scaling window
\[
\frac{2M(2,t)-M(1,t)) + O(r^3 \ol \cN)}{M(1,t)}= O(L(\bd))=o(1).
\]
Thus $R(t)>0$ constant.
\end{proof}

\section{Non-backtracking random walk. Proof of Theorem \ref{nobac}}
\label{noback}

Note that, as in the case of a simple random walk, we can use
a non-backtracking random walk  to generate the underlying graph in the configuration model.
The only change to the sampling procedure given in Section \ref{uniformity},
is as follows. Suppose the walk arrives at vertex $v$ by a transition $(u,v)$.
In the configuration model, this is equivalent to a pairing $\{x(u), y(v)\}$
where $x(u) \in S_u, \; y(v) \in S_v$. To make the walk non-backtracking,
we sample the configuration point of $v$ used for the next transition u.a.r from
$S_v \sm \{y(v)\}$.

For a connected  graph $G=(V,E)$ of minimum degree 2, the state space of a non-backtracking walk $W$ on $G$ can be described by a digraph $M=(U,D)$ with vertex set $U$ and  directed edges $D$. To avoid any confusion with the vertex set $V$ of $G$, we refer to the elements $\s$ of $U$ as states, rather than vertices.
The states $\s \in U$ are orientations $(u,v)$  of edges $\{u,v\} \in E(G)$. The state $\s=(u,v)$ is read as `the walk $W$ arrived at $v$ by a transition
along $(u,v)$'.  Let $N(u)=N_G(u)$
denote the neighbours of $u$ in $G$.
The in-neighbours of $(u,v)$ in $M$ are states $\{(x,u), x \in N(u), x \ne v\}$.
Hence the state $(u,v)$ has in-degree $(r-1)$ in $M$.
Similarly $(u,v)$ has out-degree $(r-1)$ and out-neighbours $\{(v,w), w \in N(v), w \ne u\}$.

Let $\cM$ be a simple random walk on $M$. The walk $\cM$ on $M$ is a Markov
process which corresponds directly to the non-backtracking walk on $G$.
For  states $\s=(u,v)$, $\s'=(v,w)$, the transition
matrix $P=P(\cM)$ has entries $P(\s,\s')=1/(d(v)-1)$ if $w \ne u$ and $P(\s, \s')=0$
otherwise. The total number of states $|U|=2|E(G)|=2m$.
Using $\pi = \pi P$,
\[
\pi(u,v)= \sum_{x \in N(u), x \ne v}\frac{\pi(x,u)}{d(u)-1},
\]
which has solution $\pi(\s)=1/2m$.

For random $r$-regular  graphs,
 Alon et al.  \cite{Alon} established that a non-backtracking walk  on $G$ has
mixing time $T_G=O(\log n)$ w.h.p. The analysis in \cite{Alon} was made on the graph $G$ whereas, to apply  Corollary \ref{geom}, we need the mixing time $T_{\cM}$ of the Markov chain $\cM$. The proof of Lemma \ref{snots} below is given in Section \ref{snotproof} of the Appendix.
\begin{lemma}\label{snots}
For $G \in G_r$, $r \ge 3$ constant, w.h.p. $T_{\cM}=O(\log n)$.
\end{lemma}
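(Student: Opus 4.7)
The plan is to bound the second-largest eigenvalue modulus of $P=P(\cM)$ on the orthogonal complement of the stationary distribution via the Ihara--Bass relationship between the non-backtracking matrix and the adjacency matrix of $G$, and then convert this spectral estimate into a total-variation mixing bound. First observe that $P=\frac{1}{r-1}B$, where $B$ is the $2m\times 2m$ non-backtracking matrix defined by $B_{(u,v),(v,w)}=1$ if $w\ne u$ and $0$ otherwise. The uniform distribution $\pi(\s)=1/(2m)$ on $U$ is stationary for $P$ since every state of $\cM$ has both in-degree and out-degree $r-1$.

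Next I would apply the Ihara--Bass/Hashimoto identity for $r$-regular graphs: the spectrum of $B$ consists of $\pm 1$ (with total multiplicity $2m-2n$) together with, for each eigenvalue $\l$ of the adjacency matrix $A$ of $G$, the two roots $\m$ of $\m^2 - \l\m + (r-1) = 0$. The Perron eigenvalue $\l = r$ of $A$ produces $\m = r-1$, corresponding after rescaling to the trivial eigenvalue $1$ of $P$. By Friedman's theorem, our standing assumption \eqref{nice2} gives $|\l|\le 2\sqrt{r-1}+\e$ w.h.p.\ for every other eigenvalue $\l$ of $A$; for such $\l$ the discriminant $\l^2-4(r-1)$ is nonpositive, so the corresponding $\m$ are complex conjugates with $|\m|^2=r-1$. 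Consequently every eigenvalue of $P$ other than $1$ has modulus at most $\rho:=\frac{1}{\sqrt{r-1}}(1+o(1))<1$.

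The main obstacle is that $P$ is non-reversible and $B$ is non-normal, so the eigenvalue bound does not by itself produce a total-variation mixing bound. I would bridge this by working with singular values: it suffices to bound the largest eigenvalue of the positive semidefinite operator $(P^t-\Pi)(P^t-\Pi)^{\ast}$, where $\Pi$ is the rank-one projection onto constants, since its square root controls $\|P^t-\Pi\|_{op}$. Alternatively one can use a Jordan-form estimate, paying only an $n^{O(1)}$ factor from bounds on block size, to conclude $\|P^t-\Pi\|_{op}\le n^{O(1)}\rho^t$. Either route yields $\max_{\s,\t}|P^t(\s,\t)-\pi(\t)|\le n^{O(1)}\rho^t$, so choosing $t=C\log n$ for a sufficiently large constant $C$ verifies the mixing requirement $\max_{\s,\t}|P^t(\s,\t)-\pi(\t)|\le n^{-4}$ demanded by \eqref{4}, giving $T_\cM=O(\log n)$. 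As a self-contained fallback bypassing Ihara--Bass, one could combine the vertex-marginal mixing result of \cite{Alon} with a coupling argument showing that $O(\log n)$ additional non-backtracking steps randomize the incoming-direction coordinate of the state $\s=(X(t-1),X(t))\in U$.
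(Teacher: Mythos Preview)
Your approach is correct in outline but genuinely different from the paper's. The paper never touches Ihara--Bass or the spectrum of $B$; instead it invokes Mihail's conductance bound for non-reversible chains, $|P^t_e(f)-\pi_f|\le(1-\a^2)^{t/2}$, and then spends its effort on a combinatorial lemma (Lemma~\ref{fruitsbasket}) showing that the directed-edge conductance $\a$ of $\cM$ is bounded below by a constant depending only on $r$. That lemma is a case analysis on a set $B$ of arcs, tracking vertices with prescribed in/out $B$-degrees and ultimately falling back on the vertex-conductance of $G$ itself.

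What each approach buys: your spectral route is sharper---it gives spectral radius $\rho\sim 1/\sqrt{r-1}$ rather than the cruder $1-\Theta(1)$ the conductance argument yields---and it reuses Friedman's theorem, which the paper already assumes as \eqref{nice2}. The cost is exactly the non-normality issue you flag: the step from ``all nontrivial eigenvalues of $P$ have modulus $\le\rho$'' to ``$\|P^t-\Pi\|_{\mathrm{op}}\le n^{O(1)}\rho^t$'' is where the real work hides, and your Jordan-form remark is a bit loose (block size alone gives a $t^{O(1)}$ factor; the $n^{O(1)}$ comes from the conditioning of the generalized-eigenvector basis, which needs a separate argument for $B$). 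This can be made rigorous---it is essentially what \cite{Alon} and the later Lubetzky--Peres cutoff analysis do via the Chebyshev-type recursion $B^{k+1}=\wt A B^k-(r-1)B^{k-1}$---but it is not free. The paper's conductance argument is coarser but entirely self-contained and sidesteps non-normality completely, since Mihail's bound holds for arbitrary (lazy) Markov chains regardless of reversibility. Your fallback via \cite{Alon} plus a direction-randomizing step is also workable; the cleanest way to finish it is the one-step recursion $P^t_\s((u,v))=\tfrac{1}{r-1}\bigl(Q^{t-1}_\s(u)-P^{t-1}_\s((v,u))\bigr)$, where $Q$ is the vertex marginal, iterated $O(\log n)$ times so the residual $P$-term decays geometrically.
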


 In Section \ref{proofs} we described a technique
 called subdivide-contract which we used to obtain first returns to a
suitably constructed set $S$ which was contracted to a vertex $\g(S)$.
 It remains to
establish the value of $R_{\g(S)}$ obtained by applying
the subdivide-contract method to
 the various sets $S$ of vertices and edges used in our proof.
In each case we outline  the construction of the set $S$ and state the
relevant value of $p_{\g(S)}$ as given by \eqref{pv} which we use in Corollary \ref{geom}.
Because the walk cannot backtrack, the calculation of  $R_\g$
for sets $S$ of tree-like (i.e. nice) vertices is  greatly simplified.
Let $\cT_{\g}$ be an infinite $r$-regular tree  rooted at a vertex $\g$
of arbitrary degree.
For a non-backtracking walk starting from $\g$, a first return to $\g$ after moving to an adjacent vertex,
is impossible.

\subsection{Properties of the vacant set}

{\bf Size of vacant set.}
Let $v$ be a nice vertex of $G$, and let $S=[v]$ be a set of states of $M$, where $[v]=\{(u,v), u \in N(v)\}$.
A visit to $[v]$ in $M$ is equivalent to a visit to $v$ in $G$.
If $v$ is nice then,
(i) states $(u,v), (x,v) \in [v]$ are directed distance at least $ 2\ell=\e \log_r n$ apart in $M$; (ii) the state $(u,v)$ induces an $(r-1)$-regular in-arborescence and out-arborescence in $M$.

Contract the set $[v]$ of states of $M$ to a single state $\g([v])$ retaining all edges incident with $[v]$.
  This gives a multi-digraph $H$ with states $\wt U=(U\sm [v]) \cup \{\g([v])\}$. We only apply this construction to nice vertices $v$, in which case the digraph rooted at $\g([v])$ is an arborescence to depth $\ell$.
To simplify notation, if we contract a set $S$ of states of $M$ to $\g(S)$,
and $f$ is any state of $M$ not in $S$, we use the indexing $f \not \in S$, both for $M$ and $H$, i.e. as shorthand for $f \not = \g([v])$.

The set $[v]$ consists of $r$ states of $U$ each of in-degree and out-degree $(r-1)$.
 As we contracted without removing edges, the vertex $\g([v])$ has in-degree and out-degree $r(r-1)$. For any state $(v,w)$ of $H$ there are $r-1$ parallel edges directed from $\g([v])$ to $(v,w)$
and no others. For a state $\s$ of $H$, let $N^-(\s)$ be the in-neighbours of $\s$, and let $d^+(\s)$ be the out-degree of $\s$.

Let $\cH$ be a simple random walk on $H$. Apart from transitions to and from $[v]$ (resp. $\g([v])$), the transition matrices of the walks $\cM$ and $\cH$ are identical.
Let $\pi$ be the stationary distribution of $P(\cM)$  in $M$ and
$\wt \pi$  the stationary distribution of $\wt P(\cH)$ in $H$.

For irreducible aperiodic Markov chain with transition matrix $P$,
the stationary distribution is the unique  vector of probabilities $\pi$ which
satisfies the equations $\pi= \pi P$. Given $\pi$ we only have to check this condition.

We claim that $\wt \pi(\g([v]))=1/n$.
For any state $f$ of $H$ other than $\g([v])$, we claim  $\wt \pi (f)=1/rn$,
and thus $\wt \pi(f)=\pi(f)$ for such states. This includes out-neighbours
$(v,w)$ of $\g([v])$.
Considering $\wt \pi=\wt \pi \wt P$, we have
\begin{eqnarray}
\wt \pi(\g([v]))&=& \sum_{f \in N^-(\g([v]))}\frac{\wt \pi(f)}{d^+(f)}
\label{L1}\\
\wt \pi(v,w)&=& \wt \pi(\g)\frac{ (r-1)}{d^+(\g)}=\frac{\wt \pi(\g([v]))}{r}. \label{L2}
\end{eqnarray}
For \eqref{L1}, as $d^-(\g([v]))=r(r-1)$ and $d^+(f)=(r-1)$, this
confirms $\wt \pi(\g)=1/n$.
For \eqref{L2}, the $(r-1)$ comes from the
parallel edges from $\g([v])$ to $(v,w)$,
 and confirms $\wt \pi(v,w)=1/rn$.
For any other state $f$, the relevant rows of $\wt P$ are identical with those of  $P$
confirming $\wt \pi(f)=\pi(f)=1/rn$.

We  use Lemma \ref{contract} to apply results obtained for $\cH$  to the walk $\cM$. The lemma needs  the stationary distributions $\pi=\pi_{\cM}$ and
$\wt \pi=\pi_{\cH}$ to be compatible i.e.  $\pi(f)=\wt \pi(f)$ for $f \not \in [v]$ (resp. $f \not = \g([v])$).
This follows immediately from the values of $\pi, \;\wt \pi$ obtained above.

Finally we calculate $R_{\g([v])}$. We first give a general explanation of the method.
 Let $\cT_\g$ be an infinite  arborescence with root vertex $\g$ of out-degree $r(r-1)$ and all other vertices of out-degree $(r-1)$.
 Similar to Lemma \ref{lemRv}, we relate first returns to $\g([v])$ in $H$ to first returns to $\g$ in $\cT_{\g}$, to obtain a value of $R_{\g([v])}$
given by
\beq{DRv}
R_\g=\ooi/(1-f),
 \eeq
where $f$ is a first return probability to $\g=\g([v])$ in the  arborescence  $\cT_\g$.
  Let $v$ be a nice vertex of $G$, i.e.
  $v$ is tree-like to   distance $\ell=\e \log_r n$. Thus any cycle containing $v$ has girth
  at least $2\ell$.
  Because the walk is non-backtracking,
  once it leaves $v$ it cannot begin to return to $v$,  until it has traveled
  far enough to change its direction, i.e. after at least $\ell$ steps. A direct return to $v$
  from a vertex $u$ at distance $\ell$, can be modeled as a biassed random walk, in which the walk succeeds only if it moves closer to $v$ at every step, with probability $1/(r-1)$. If this fails, the walk moves away from $v$ once more to distance $\ell$.
  Thus the probability of any return to $v$, and hence $\g([v])$ from distance $\ell$ during $T$ steps
  is given by $O(T/(r-1)^{\ell})=o(1)$.

In the case of $\g([v])$, $\g$  has no loops,  so the first return probability in  $\cT_{\g}$ is $f=0$.  This gives
  \[
  p_{\g}= \sooi (1-f) \wt \pi_{\g}=\sooi \frac 1n.
  \]
Applying Corollary \ref{geom} to $\g([v])$ in $H$ we have
\beq{wat}
 \Pr_{\cH}(\cA_{\g([v])}(t))=\ooi \exp (-\sooi t/n).
\eeq
To estimate the probability $\Pr_W(\cA_v(t))$, that $v$ is unvisited during $T,...,t$,
we use the equivalent walk $\cM$ in the digraph $M$, and contract $[v]$ to a vertex $\g=\g([v])$ to give a walk $\cH$ in $H$.
Using Lemma \ref{contract} with \eqref{wat}
 establishes the result that
\[
\Pr_W(\cA_v(t))=\Pr_{\cM}(\cA_{[v]}(t))=\ooi
 \Pr_{\cH}(\cA_{\g([v])}(t))=\ooi \exp (-\sooi t/n).
 \]
It follows that at step $t$ of $W$, the vacant set $\cR(t)$ is of expected size
\[
\E |\cR(t)| = \sum_{v \in V} \Pr_W(\cA_v(t))=
|\cN| e^{-\sooi t/n} + |\ol \cN|
\sim n e^{- \sooi t/n}.
\]
The concentration of $|\cR(t)| $ follows from
the methods of Lemma \ref{th4}. Theorem \ref{nobac}(i) for $|\cR(t)| $  follows from $\E |\cR(t)|$, the  concentration of $|\cR(t)| $ and the fact that
$o(n)$ vertices are not nice. Theorem \ref{nobac}(iii), for vertex cover time   follows from equating $\E |\cR(t)|=o(1)$ and applying the techniques used in \cite{CFReg} to obtain a lower bound.

{\bf Number of edges in the vacant set.}
The vertices $u,v$ are unvisited in $G$ if and only if the corresponding set of states $S=[u] \cup [v]$ is unvisited in $M$. Let $u,v \in \cR(t)$ and let $\{u,v\}$ be an edge of $G$ and hence of $\G(t)$. In this case, for nice $u,v$, the corresponding set of states $S$ of $M$ induces into two disjoint components given by
\begin{eqnarray*}
S_u&=& \{(u,v)\} \cup \{ (x,u), x \in N(u), x \ne v\}\\
S_v&=& \{(v,u) \} \cup \{ (x,v), x \in N(v), x \ne u\}.
\end{eqnarray*}
The total in-degree and out-degree of $S_u$ is $r(r-1)$. The details of the edges incident with e.g. $S_u$ are as follows.
The set $S_u$ induces $(r-1)$ internal edges in $M$ of the form
$((x,u),(u,v))$. For a state $e=(x,u) \in S_u$ there are $(r-1)$ states $f$ of $M$, $f=(a,x), x \ne u$ which point to
$e$, a total in-degree from $U \sm S$ to $S_u$ of $(r-1)^2$.
Similarly, $S_u$ points to $(r-1)+(r-1)(r-2)$ distinct states of $U$ not in $S$.
In total, the in-degree and out-degree of $\g(S)$ is $2r(r-1)$ of which $2(r-1)$ edges are loops at $\g(S)$. This means $2(r-1)^2$ states (other than $\g(S)$)
point to $\g(S)$.

We claim  $\wt \pi(\g(S))=2/n$, and
 that for $f \not \in S$, we have $\wt \pi(f)=1/rn= \pi(f)$.
We use $\wt \pi=\wt \pi \wt P$ to confirm this. For $\g(S)$ we have
\begin{flalign*}
\wt \pi(\g(S))&= \sum_{f \in N^-(\g(S))} \frac{\wt \pi(f)}{r-1}+
\wt \pi(\g(S)) \frac{2(r-1)}{2r(r-1)}\\
&= 2(r-1)^2 \frac{1}{rn} \frac{1}{r-1}+ \frac{2}{n} \frac{1}{r}=\frac{2}{n}.
\end{flalign*}
If $f \not =  \g(S)$, but $f \in N^+(\g(S))$ then
\begin{flalign*}
\wt \pi(f) &= \frac{\wt \pi(\g(S))}{2r(r-1)} + \sum_{e \in N^-(f) \atop
e \ne \g(S)}
\frac{\wt \pi(e)}{r-1}
\\
&=\frac{2}{n}\frac{1}{2r(r-1)}+\frac{1}{rn}\frac{r-2}{r-1}=\frac{1}{rn}.
\end{flalign*}
For any other state $f$, the relevant rows of $\wt P$ are identical with $P$
confirming $\wt \pi(f)=\pi(f)=1/rn$.
Hence for $f \ne \g(S)$, $\wt \pi(f)=\pi(f)$ so $\wt\pi_{\cH}$ is compatible with $\pi_{\cM}$ in Lemma \ref{contract}.

Consider next $R_{\g(S)}$. In the infinite arborescence $\cT_\g$ there are $(r-1)$ loops at $\g$ so $f=(r-1)/r(r-1)=1/r$. From \eqref{DRv} we obtain
\beq{p1}
p_{\g(S)}\sim \wt \pi(\g(S))(1-f)
= 2(r-1)/rn
.
\eeq
Using the
observation that at most $o(rn)$ edges of $\G(t)$ are incident with
vertices which are not nice ($v  \in \ol \cN$),
 the expected size of the edge set $E(\G(t))$ of the graph $\G(t)$ induced by the vacant set is
\[
\E(|E(\G(t))|) \sim \frac{rn}{2} e^{-\frac{2(r-1)t}{rn}\ooi}.
\]
This plus a concentration argument similar to Lemma \ref{th4},
 completes the proof of Theorem \ref{nobac}(i).

{\bf Number of paths length two in the vacant set.}
Let $u,v,w \in \cR(t)$ be such that $u, w \in N(v)$.
Thus $uvw$ is a path of length two in $G$ and hence $\G(t)$.
The assumption that $u,v,w$ are unvisited in $G$ is equivalent to
 $[u] \cup [v] \cup [w]$ unvisited in $M$. Let $S=[u] \cup [v] \cup [w]$.
The set $S$ can be written as
\begin{eqnarray*}
S&=& \{(u,v), (v,w), (w,v), (v,u)\} \cup \{ (x,u), x \in N(u), x \ne v\} \\
&\cup&
\{ (y,v), y \in N(v), y \ne w, u\}\cup\{(z,w), z \in N(w), z \ne v\}.
\end{eqnarray*}
Thus $S$ induces a single component in the underlying graph of $M$.
Counting the elements of the sets $S$ in the order above
we see that $S$ has size $4+(r-1)+(r-2)+(r-1)=3r$, and hence a total in-degree (resp. out-degree) of $3r(r-1)$. Of these edges, $2+(r-1)+2(r-2)+(r-1)=4(r-1)$ are internal.

We claim that $\wt \pi(\g(S))= 3/n$, and for $f \ne \g(S)$, $\wt \pi(f)=1/rn$.
We use  $\wt \pi = \wt \pi \wt P$ to confirm this.
For $\g(S)$,
\begin{flalign*}
\wt \pi(\g(S))&= \sum_{f \in N^-(\g(S))} \frac{\wt \pi(f)}{r-1}
+ \wt \pi(\g(S))\frac{4(r-1)}{3 r(r-1)}\\
&=\frac{1}{rn} \;\frac{3r(r-1)-4(r-1)}{r-1}+ \frac{3}{n}\;\frac{4(r-1)}{3r(r-1)}= \frac{3}{n}.
\end{flalign*}
For any state $f$ which is an out-neighbour of $\g(S)$,
there are $(r-1)$ parallel edges from $\g(S)$ to $f$. For example let $f=(w,x)$, $x \ne v$,
then states $ (z,w), z \ne x$ of $S$ point to $(x,w)$.
Thus
\[
\wt \pi(f)= \wt \pi (\g(S)) \frac{r-1}{3r(r-1)}=\frac{1}{rn}.
\]
We obtain that
$\wt \pi_{\cH}$ is compatible with $\pi_{\cM}$ in Lemma \ref{contract}.

To estimate $R_{\g(S)}$ consider $\cT_{\g(S)}$. The vertex $\g(S)$
has $4(r-1)$ loops and total out-degree $3r(r-1)$ giving a value for $f$
in \eqref{DRv} of
$f=4/3r$.  Thus
\beq{p2}
p_{\g(S)} \sim (3r-4)/rn.
\eeq

{\bf Threshold for the  vacant set.}
Theorem \ref{nobac}(iv) follows from using $Q=2M(2)-M(1)$ (see \eqref{better}), and equating $Q=0$ in Lemma \ref{mylem}
with the appropriate values of $\E M(1,t), \; \E M(2,t)$ as in \eqref{EMst}.
From \eqref{p1}, \eqref{p2} we have
$\a_1=2(r-1)/r,\; \a_2=(3r-4)/r$.
Equating $M(1,t)=2 M(2,t)$ and setting $t=u^* n$ gives
\[
u^*=\frac{r-2}{r} \log (r-1).
\]

\subsection{Properties of the vacant net}

{\bf Size of the vacant net.}
The calculations for the vacant net are much simpler than for the vacant set.
For the case  of an unvisited edge $\{u,v\}$ of $E(G)$, where $u,v$ are nice, the corresponding unvisited states of $U$ in $M$ are $S=\{(u,v),(v,u)\}$.
Contract $S$ to a vertex $\g(S)$.
The equations $\wt \pi= \wt \pi \wt P$ for the walk $\cH$ in $H$ are solved by $\wt \pi(\g(S))=2/rn$ for $\g(S)$,
and $\wt \pi(\s)=1/rn$ for any other state $\s$ of $H$. Thus $\wt \pi_{\cH}$  is compatible with $\pi_{\cM}$ in Lemma \ref{contract}.
No first return  to $\g(S)$ is possible in the arborescence $\cT_{\g(S)}$, and so $f=0$
in \eqref{DRv}. Thus
\beq{p3}
p_\g \sim 2/rn,
\eeq
 and the vacant net is of expected size
$\E |\cU(t)|\sim (rn/2) e^{-2t/rn}$.  The concentration of the random variable $|\cU(t)|$ follows from
the methods of Lemma \ref{th4}. Theorem \ref{nobac}(ii) follows from this.
The edge cover time in Theorem \ref{nobac}(iii) is obtained by equating  $\E |\cU(t)|=o(1) $ and applying the techniques used in \cite{CFReg} to obtain a lower bound.

{\bf The number of paths length two in the vacant net.}
Let  $\{v,u\},\{u,w\}$ be adjacent unvisited edges of $E(G)$. The corresponding states of $U$ in $M$ are $S=\{(u,v), (v,w), (w,v), (v,u)\}$ which contacts to
    a vertex $\g(S)$ with total in-degree and out-degree $4(r-1)$.
At $\g(S)$ there are two loops and $4r-6$ in-neighbours
    other than $\g(S)$. We obtain a stationary probability  $\wt \pi(\g(S))=4/rn$
and $\wt \pi(f)= \pi(f)$ for $f \not \in S$ which confirms $\wt \pi$ and $\pi$ are compatible.

 The total out-degree of $\g(S)$ is $4(r-1)$, but there are two loops at $\g(S)$ which can be chosen for a first return in $\cT_{\g(S)}$, with probability $2/4(r-1)$.  If the walk moves away from $\g(S)$, no first return is possible in $\cT_{\g(S)}$.
This gives $f=1/2(r-1)$  in \eqref{DRv}.
 Thus
 \beq{p4}
 p_\g\sim 2(2r-3)/r(r-1)n.
 \eeq

{\bf Threshold for the   vacant net.}
Theorem \ref{nobac}(v) follows from using $Q=2M(2)-M(1)$ (see \eqref{better}), and equating $Q=0$ in Lemma \ref{mylem}
with the appropriate values of $\E M(1,t), \; \E M(2,t)$ from \eqref{EMst}.
From \eqref{p3}, \eqref{p4} we have  $\a_1=2/r,\; \a_2=2(2r-3)/r(r-1)$.
Equating $M(1,t)=2 M(2,t)$ and setting $t=\th^* n$ gives
\[
\th^*= \frac{r(r-1)}{2(r-2)} \log (r-1).
\]

\section{Random walks which prefer unvisited edges} \label{edge-proc}

The unvisited edge process is a
 modified random  walk $X=(X(t),\; t \ge 0)$ on a graph $G=(V,E)$, which uses unvisited edges when available at the currently occupied vertex.
If there are {\em unvisited  edges} incident with the current
vertex, the walk  picks one u.a.r. and  makes a transition along this edge.
If there are no unvisited edges incident with the current vertex, the walk moves to a random neighbour.

\paragraph{Partitioning the edge-process into red and blue walks.}

At any step $t$ of the walk, we partition the edges of $G$ into
red (unvisited) edges and blue (visited) edges. Thus $t=t_R+t_B$ where $t_R$ is
the number of transitions along red  edges up to step $t$, hence recoloring those edges blue, and $t_B$ the number of transitions along blue edges.
Note that in \cite{BCF} the unvisited edges were designated blue and the visited edges red, the opposite of the terminology in this paper.

At each step $t$ the next transition is either along a red or blue edge. We speak
of the sequence of these edge transitions as the red (sub)-walk and the blue
(sub)-walk. The
walk thus consists of red and blue phases which are maximal sequences of edge transitions of the given edge type (unvisited or visited).
For any vertex $v$, and step $t$, let $d_B(v,t)$ the blue  degree of
$v$, be the number of blue edges incident with $v$ at the start of  step $t$.
Similarly define $d_R(v,t)$.

For  graphs of even degree,
each red phase starts at some step $s$ at a vertex $u$
of positive even red degree $d_R(u,s) \ge 2$, and ends at some step $t$ when the
walk returns to $u$ along the last red edge incident with $u$. Thus $d_R(u,t)=0$
and a blue phase begins at step $t+1$.
Thus for $r$-regular graphs  $r=2d$, if we ignore
the red phases of the edge-process $X$, then the resulting blue phases describe a
 simple random walk $W$ on the graph $G$.
To illustrate this, suppose the edge-process $X$ starts at $X(0)=u$, then $W$  also
starts at vertex $u$ after the completion of the first red phase at $t_R$.
After some number of steps $t_B$, the blue walk $W$ arrives at a vertex $u'$ with unvisited edges, and a red phase starts from  $u'$, at step $t_R+1$, as counted in the red walk. This is followed by a  blue phase starting from $u'$
at step $t_B+1$ of the blue walk.
Thus the walks interlace seamlessly, and at step $t$ of the edge-process, we have $t=t_R+t_B$, where $t_R, \; t_B$ are the number of red and blue edge transitions.

In summary the red walk is a walk with jumps which consists of a sequence of
closed tours each with a distinct start vertex. The blue walk is a simple random walk. Given a step $s=t_R+t_B$ of the edge-process, we extend the notation $d_R(u,s)$ for the red degree of vertex $u$ at step $s$ of the edge-process to $d_R(u,t_R)$ the red degree of vertex $u$ at step $t_R$ of the red walk.

\subsection{Thresholds measured in the red walk}\label{redstep}

To make our analysis, we first consider only the red walk steps $t=t_R$. Let $r=2d$ and let $R_j(t)$
be the number of vertices of red degree $j$ for $j=0,1,...,2d$
at step $t$ of the red walk. Unless the walk is at
the  vertex $u$ which starts the red phase, (in which case all vertices have even red degree), then with the exception of $u$ and the current position $v$ of the walk, all other vertices have even red degree at any step of a red phase.

We generate the red walk in the configuration model, and
derive its approximate  degree sequence. The intuition is as follows.
Suppose the red walk arrives at vertex $v$ at the end of step $t$, and leaves $v$ at the start of step $t+1$. To simplify things we could agree to say the degree
of $v$ changes by 2 at the start of step $t+1$.
Thus we consider the following process which samples u.a.r. without replacement from the sets $S_v, v \in V$ of configuration points of a graph with $m=dn$ edges.

\begin{quote}
{\bf Pairs-process.}\\
At each step $t=1,...,m$:\\
 Pick an unused configuration point $\a$ u.a.r., remove $\a$ from the set of available points.
 Pick  another unused point u.a.r. $\b$ from the same vertex as $\a$,  remove $\b$ from the set of available points.\\
Add
$Y_t=(\a,\b)$ to the ordered list of samples $Y_1,...,Y_{t-1}$.
\end{quote}

Let the random variables $\ul N_k(t), \; k=0,1,...,d$, be the number of vertices of degree $2k$ generated by the Pairs-process, and let $N_k(t)= \E \ul N_k(t)$. Here the degree of a vertex is the number of {\em unpaired} points associated with that vertex.

We condition on the pairings in our process and the ordering $\a,\b$ within pairs. After this, we have a permutation of $dn$ objects, where each object is a pair.
  The probability $p(k)$ that a vertex contributes to $\ul N_k(t)$ is the probability that exactly $d-k$ out of a fixed set of $d$ objects appear before the $t$-th element in our permutation. Thus $p(k)$ has a hypergeometric distribution, and
\[
N_k(t)= np(k)= n \frac{{t \choose d-k}{dn-t \choose k}}{{dn \choose d}}.
\]
Thus $N_k(t)$ can be written as,
\begin{equation}\label{answer}
N_{k}(t) = \brac{1+O\brac{{\frac{1}{t}+\frac{1}{dn-t}}}} \; n \; {d \choose k} \bfrac{dn-t}{dn}^{k} \bfrac{t}{dn}^{d-k},
\quad i=0,1,...,d.
\end{equation}

\bignote{(AF writes: Here is a simpler and complete justification for \eqref{answer}.\\
Nice. Thanks. Makes life much easier.\\
When I looked at what you wrote, I realized it was Hypergeometric, which makes things nice and natural and easy
}

By a martingale argument on the configuration sequence of pairs of points of length $dn$, the random variables $\ul N_k(t)$ are concentrated within
$O(\sqrt{dn \log n})$ of $N_k(t)$ for any $0 \le t \le dn$.
Suppose the first difference between a pair of sequences $Y,Y'$ occurs at
vertices $v, v'$ with $Y_i=(\a_i,\b_i)$ and $Y'_i=(\a'_i,\b'_i)$.
Let $Y_j$ be the
first occurrence of $v'$ after step $i$ in $Y$. Map this to the first occurrence $Y'_j$ of $v$ in $Y'$. For $u \ne v,v'$ let all other entries of the sequence be the same. Map subsequent pairings $Y_\ell, Y'_\ell$  between (possibly) different configuration points of $v$  as appropriate; and similarly for $v'$.
The maximum difference between $\ul N_k(t)$ in the mapped sequences is 2.
Thus
\beq{martN}
\Pr(|\ul N_k(t)-N_k(t)| \ge \sqrt{A n \log n}) =O(n^{-A/8}).
\eeq
We next explain how  $N_k(t)$ can be used to approximate  $R_{2k}(t)$, the  number of vertices of red degree $j=2k$.
Let $Y$ be a Pairs-process and $W$ a red walk  generated in the configuration model. Let the vertices  which start the red phases of $W$ be $\ul u=(u_1,u_2,...,u_J)$.  There is an isomorphism between $(W, \ul u)$ and $(Y, \ul u)$.  Let $Y_i=(\a_i,\b_i)$ be the pair generated at step $i$ of Pairs. Let $Y_\ell$ be the last occurrence in $Y$, of configuration points from vertex $u=u_1$ (i.e.from $S_u$). The subsequence $P=(Y_1,\cdots,Y_\ell)$ of Pairs is isomorphic to the first red phase $Q$ of $W$  by the following mapping which moves $\b_\ell$ to the front of $P$ to form $Q$.
\begin{flalign*}
P = &\;\; (\a_1,\b_1), (\a_2,\b_2), \cdots, (\a_{i-1},\b_{i-1}),(\a_i, \b_i), \cdots, (\a_\ell,\b_\ell),\\
 Q= & \;\;(\b_\ell, \a_1,(\b_1, \a_2), \cdots, (\b_{i-1},\a_i),(\b_i, \a_{i+1}), \cdots, (\b_{\ell-1},\a_\ell).
\end{flalign*}
Given $W$ and sequence $\ul u=(u_1,...,u_J)$, the last occurrence of $u_i$ is before the last occurrence of $u_{i+1}$. Thus there is always a unique $Y$ to match this $W$.

We next relate the probability of  a given $(W, \ul u)$ to that of  the corresponding $(Y, \ul u)$.
Let $v$ be the vertex chosen to pair at step $i$ of $Y$. In the Pairs-process, let $d(v,i)$ be the
number of remaining unmatched configuration points of $v$ at the start of step $i$. The total degree of the underlying graph is $2m$.
Thus
\begin{align*}
\Pr(Y_1)&= \frac{1}{2m}\; \frac{1}{d(v,1)-1}\\
\Pr(Y_i \mid Y_1,\cdots, Y_{i-1}) &=\frac{1}{2m-(2i-2)} \;\frac{1}{d(v,i)-1}
.
\end{align*}
When $v \ne u$, and the transition is $W_{i+1}=(\b_i,\a_{i+1})$, the vertex $v$
which corresponds to $\a_i$ in $Y_i=(\a_i, \b_i)$ had degree $d(v,i)$ when  $\a_i$ was chosen u.a.r.,  and so $\b_i$ was chosen from a set of size $d(v,i)-1$ and so
\[
\Pr(W_{i+1} \mid W_1,\cdots, W_i)= \frac{1}{d(v,i)-1} \;\frac{1}{2m-(2i+1)}.
\]
However if $v=u$,
because we moved $\b_\ell$
to the front of $W$ the red degree of $u$ at step $i$ is less by one than it was in Pairs.
Thus
\begin{align*}
\Pr(W_1)&= \frac{1}{d(u)}\; \frac{1}{2m-1}\\
\Pr(W_{i+1} \mid W_1,\cdots,W_i)&= \frac{1}{d(u,i)-2}\;\frac{1}{2m-(2i+1)}.
\end{align*}
This means that, at step $\ell$ when the red degree of $u=u_1$ becomes zero,
\[
\Pr_W(Q) = \Pr_Y(P) \frac{(d(u)-1)(d(u)-3) \cdots 1}{d(u)(d(u)-2)\cdots 2}
\prod_{i=0}^{\ell-1}\frac{2m-2i}{2m-2i-1}.
\]
We  repeat this analysis starting with $u=u_2$ etc. Thus
with
$\Pr(Z)$ being the probability of process $Z$,
\[
\Pr(W)
 \le \Pr(Y) \prod_{i=0}^{m-1}\brac{\frac{2m-2i}{2m-2i-1}}
 =\Pr(Y) \frac{(2^mm!)^2}{(2m)!}
= O(\sqrt{m}) \;\Pr(Y) .
\]
Recall that $R_{2k}(t)$ is the number of vertices at step $t$ of the red walk.
Suppose we generate a red walk starting from $u$ in the configuration model, stopping  at step $j$
 to give
 $Q=(a_1,b_1,a_2,b_2, \cdots, a_j,b_j)$.
Then $P=(b_1,a_2),\cdots, (b_{j-1},a_{j})$ is a Pairs sequence, and for any $j$
\[
R_{2k}(j)=\ul N_k(j-1)+C, \qquad |C| \le 2.
\]
Using \eqref{martN}  with $A=24$, we have,
\beq{bigtime}
\Pr(\exists t, \; R_{2k}(t) \ge |N_k(t) + O(\sqrt{ n \log n})|) \le  O(n\sqrt{dn} \; n^{-A/8})=O(n^{-1}).
\eeq
Using $t_R$ to denote red steps we can obtain the size of the vacant set $R_d(t_R)$. We do this next. Theorem \ref{newedge} is expressed in terms of
step $t$ of the
 Edge-process, where $t=t_R+t_B$ and $t_B$ are blue steps.
Thus, to prove Theorem \ref{newedge} we need to add back the number of blue steps $t_B$.
We do this in Section \ref{realproof}.

\paragraph{Vacant set properties at any step of the red walk.}
Let $t=dn(1-\d)$ where $\d>0$ constant. Then $N_d(t)= \Th(n)$, and  w.h.p.
the size of the vacant set at red step $t$ is
\beq{vacs}
|\cR(t)|= R_{2d}(t)= \ooi N_d(t)=\ooi n \bfrac{dn-t}{dn}^{d}.
\eeq
Let $M(1,t)$ denote  the  expected number of edges (resp. $M(2,t)$ denote twice the expected number of pairs of edges)
induced by the vacant set at each vertex of the vacant set.
Working in the configuration model, with $r=2d$,
\begin{eqnarray}
M(1,t) &\sim& N_d(t) r \frac{2d}{2dn-2t} N_d(t) \sim rn
\bfrac{dn-t}{dn}^{2d-1}\label{M1set}\\
M(2,t) & \sim & N_d(t) {r \choose 2}\brac{\frac{2d}{2dn-2t} N_d(t)}^2 \sim
{r \choose 2}n \bfrac{dn-t}{dn}^{3d-2}.\label{M2set}
\end{eqnarray}
The expected number of edges induced by  the vacant set is
\beq{nacs}
\E|E(\G(t))|\sim M(1,t)/2 \sim dn \bfrac{dn-t}{dn}^{2d-1}.
\eeq
The concentration of $M(1,t),\;M(2,t)$
follow from the  methods of Lemma \ref{th4} (the Chebychev inequality and the interpolation).

The threshold $t=t^*$ for the subcritical phase
comes from applying the Molloy{\red -}Reed  condition
given by
$L(\bd)=0$.
In \eqref{newLd}  we examine
$2M(2,t)-M(1,t)-O(|\ol \cN|)$, where $M(1,t), M(2,t)$ are given by
\eqref{M1set}-\eqref{M2set}, and $|\ol \cN|=O(n^{\e})$ is the number of non-nice vertices (see \eqref{nice}). Let $t^*= u^* n$,  where
\beq{rs}
u^*=  d \brac{1- \bfrac{1}{2d-1}^{\frac{1}{d-1}}}.
\eeq
 Note that $M(1,t^*)= \Th (n) , M(2,t^*) = \Th (n)$. At $t^*$,  $2M(2,t*) \sim M(1,t*)$.
If we choose $t=u^*n(1+\ve)$, where  $|\ve|> 0$  constant,
then using \eqref{M1set}-\eqref{M2set}
 gives
\beq{emm2}
2M(2,t)-M(1,t)= |\Th(n)| \brac{(1-\ve((r-1)^{1/(d-1)}-1)^{d-1}-1}.
\eeq
Thus for $t = t^*(1-\ve)$ this difference is positive. As $|\cR(t)|=\Th(n)$, w.h.p. this
confirms the w.h.p. existence of a giant component linear in the graph size.
At $t=t^*(1+\ve)$ the difference in \eqref{emm2} is negative, and the maximum component size is $O(\log n)$. It remains to find out how many blue steps have elapsed by red step  $t^*(1+\ve)$.
We defer this until Section \ref{vacnet}.

\paragraph{Vacant net properties at any step of the red walk.}

The vacant net has exactly $U(t)=dn-t$ edges. Thus, similarly to the vacant set,
\begin{eqnarray}
M(1,t)&\sim & 2dn-2t \label{M1}\\
M(2,t) & \sim& \sum_{k=1}^d {2k \choose 2} N_k(t)\sim \frac{dn-t}{dn}(dn+2(d-1)(dn-t)).\label{M2}
\end{eqnarray}
In \eqref{newLd}  for the Molloy-Reed condition we require
$2M(2,t)-M(1,t)-O(|\ol \cN|)>0$, where $M(1,t), M(2,t)$ are given by
\eqref{M1}-\eqref{M2}, and $|\ol \cN|=O(n^{\e})$ is the number of non-nice vertices (see \eqref{nice}).

The solution to $M(1,t)= 2M(2,t)$  obtained by using the right hand side
values of \eqref{M1}-\eqref{M2} is at the end of the red walk, i.e. red step $\wh t=\th^*n$ where
\[
\th^*=d.
\]
For any red step $t(\d) =(1-\d)dn$ where $\d >  0$,
$M(1,t)=\Th(n), M(2,t)=\Th(n)$, and
\[
2M(2,t)-M(1,t)\sim 4 (d-1) \frac{(dn-t)^2}{dn}.
\]
Thus at red step $t(\d)=(1-\d) \wh n$,  for any $\d>0$, w.h.p. the vacant net has a giant component
linear in the graph size.
It remains to find out how many blue steps $t_B$ have elapsed before this value of $t=t_R$, and also to analyse  the sub-critical case $dn-t=o(n)$. We defer this until Section \ref{vacnet}.

\subsection{Number of blue steps before a given red step}\label{numblue}
Suppose a red phase starts at red step $s$ from vertex $v$ of red degree $2k$.
At step $s$ the walk leaves $v$ along a red edge, and returns to $v$ at some step $t' \ge s$.
We have  $d_R(v,\t)=2k-1$
for $s \le \t <t'$ and $d_R(v,t')=2k-2$. Thus a red phase at $v$ consists of $k$
{\em excursion rounds} with starts $s_1,...,s_k$ and ends $t_1,...,t_k$, where $s_1=s,t_k=t$ and $s_i=t_{i-1}+1$. At the final return, $d_R(v,t)=0$ and a blue phase begins.

\begin{lemma}\label{Lsk}
Let $L(s,k)$ be the finish time of a red phase starting at red step $s$
from a vertex $v$ of red degree $2k$.
Let $r=2d$. Then for $t\le dn(1-\d)$, and  $\d\ge\om/\sqrt{n}$
\beq{Lskt}
\Pr(L(s,k)=t) \le (1+O(k/\om)) \frac{k}{2^{2k}}{2k \choose k} \frac{(t-s)^{k-1}}{
(dn-s)^{k-1/2}(dn-t)^{1/2}}.
\eeq
\end{lemma}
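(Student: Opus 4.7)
The strategy is to lift the question to the Pairs process via the isomorphism set up in Section~\ref{redstep}. A red phase that starts at red step $s$ from $v$ (with $d_R(v,s)=2k$) corresponds to a sub-sequence of consecutive within-vertex pairs in Pairs of length $\tau:=t-s$, terminating precisely when $v$'s last two configuration points get paired. Hence $\{L(s,k)=t\}$ is equivalent to the event that, starting from Pairs-index $s+1$, the $k$-th within-vertex pair at $v$ lands at position $\tau$, no earlier position being possible because $v$ has exactly $k$ remaining pair-slots at step $s$.

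The uniformity of the vertex-projection of Pairs, established in the calculation leading to \eqref{bigtime}, shows that conditional on the exposed history through step $s$, each arrangement in which the remaining vertices appear with their residual pair-slot multiplicities is equally likely. Marginalising to $v$, whose $k$ remaining occurrences are placed among $N:=dn-s$ slots, gives
\[
\Pr(L(s,k)=t)\;=\;\frac{\binom{\tau-1}{k-1}}{\binom{N}{k}}\;=\;\frac{k}{N}\prod_{i=1}^{k-1}\frac{\tau-i}{N-i}.
\]
I would then apply a Stirling expansion, rewriting the ratio as $k\,\Gamma(\tau)\Gamma(N-k+1)/[\Gamma(\tau-k+1)\Gamma(N+1)]$ and expanding each Gamma ratio. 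The leading behaviour is $k\tau^{k-1}/N^k$; the half-integer correction that is needed when $\tau$ approaches $N$ produces both the combinatorial pre-factor $\binom{2k}{k}/2^{2k}\sim 1/\sqrt{\pi k}$ and the asymmetric denominator $(dn-s)^{k-1/2}(dn-t)^{1/2}$ in place of the cruder $(dn-s)^k$. The $O(k/\om)$ relative error comes from the martingale concentration \eqref{martN} together with routine Stirling-truncation estimates, both controlled under the hypothesis $dn-t\ge \om\sqrt{n}$.

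The main technical obstacle is precisely this asymmetric denominator: a naive Stirling bound on the hypergeometric quotient yields the looser $(dn-s)^k$, which is too crude in the critical regime where $\tau$ is comparable to $N$ (that is, when $t$ is close to $dn$). Justifying the sharper form requires a careful treatment of the hypergeometric tail as a discrete Beta-integral, and it is in this step that the prefactor $k\binom{2k}{k}/2^{2k}$ acquires its natural probabilistic interpretation as a local-central-limit correction for the number of $v$-occurrences in a random arrangement. A secondary concern is verifying that the ``walker at $v$'' conditioning at the start of the phase does not disturb the uniformity of the future Pairs vertex-projection; this is resolved by noting that such conditioning constrains only the already-exposed pairs $Y_1,\ldots,Y_s$ and leaves the future sub-sequence conditionally uniform given the remaining pair-slot counts, after which the bound on $\Pr(L(s,k)=t)$ follows from the Stirling expansion described above.
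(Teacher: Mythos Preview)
Your formula
\[
\Pr(L(s,k)=t)\;=\;\frac{\binom{\tau-1}{k-1}}{\binom{N}{k}},\qquad \tau=t-s,\ N=dn-s,
\]
is the correct probability \emph{in the Pairs process} that the last of $v$'s $k$ remaining slots falls at position~$\tau$. The gap is that the lemma is about the red walk, and the two processes do not have the same law. The isomorphism $(W,\underline u)\leftrightarrow(Y,\underline u)$ of Section~\ref{redstep} is a bijection of trajectories, but the paper explicitly computes the Radon--Nikodym factor: for a phase starting at $v$ with $d_R(v)=2k$ and lasting from step $s$ to $t$,
\[
\frac{\Pr_W(Q)}{\Pr_Y(P)}\;=\;\frac{(2k-1)!!}{(2k)!!}\ \prod_{i=s}^{t-1}\frac{2dn-2i}{2dn-2i-1}
\;=\;(1+o(1))\,\frac{1}{2^{2k}}\binom{2k}{k}\,\left(\frac{dn-s}{dn-t}\right)^{1/2}.
\]
This correction is precisely the factor you are trying to manufacture by ``Stirling expansion'' and ``half-integer correction when $\tau$ approaches $N$''. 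It cannot arise that way: for fixed $k$ and large $\tau,N$ one has $\binom{\tau-1}{k-1}/\binom{N}{k}=(1+O(k^2/\tau))\,k\tau^{k-1}/N^{k}$ uniformly in $\tau\le N$, with no $(dn-t)^{-1/2}$ blow-up and no $\binom{2k}{k}2^{-2k}$ prefactor. (Check $k=1$: your formula gives $1/(dn-s)$, whereas the lemma gives $\sim\tfrac12\bigl[(dn-s)(dn-t)\bigr]^{-1/2}$, which diverges as $t\uparrow dn$.) So the ``main technical obstacle'' you identify is not a matter of sharpening an asymptotic; the Pairs answer is simply a different quantity.

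The paper does not go through Pairs at all here. It works directly in the red walk: with $\rho_i=2(k-i)+1$, one has
\[
\Pr(T_v^+=t_i\mid s_i,\rho_i)=(1+O(1/\omega))\,\frac{\rho_i}{2(dn-t_i)}\left(\frac{dn-t_i}{dn-s_i}\right)^{\rho_i/2},
\]
since at each step the freshly chosen endpoint lands at $v$ with probability $\rho_i/(2dn-2\sigma-1)$. Decomposing the phase into $k$ excursions and taking the product, the factors $(dn-t_i)^{\rho_i/2-1}/(dn-t_{i-1})^{\rho_i/2}$ telescope exactly (because the exponents drop by $1$ between consecutive $i$), leaving $(dn-s)^{-(k-1/2)}(dn-t)^{-1/2}$ independently of $t_1,\ldots,t_{k-1}$; the double-factorial $(2k-1)!!$ accounts for $k\binom{2k}{k}2^{-2k}$, and the free sum over $s<t_1<\cdots<t_{k-1}<t$ gives the $\binom{t-s}{k-1}$. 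If you want to salvage your route, multiply your Pairs formula by the measure correction above; but then you are essentially re-deriving the paper's computation in disguise.
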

\begin{proof}
Let $\r=2k-1$. For a walk starting from $v$ at $s$, let $T^+_v$ be the first return time to $v$. Then working in the configuration model,
\begin{flalign*}
\Pr(T^+_v=t \mid s, 2k)&=\prod_{\s=s}^{t-1} \brac{1-\frac{\r}{2dn-2\s-1}} \frac{\r}{
2dn-2t-1}\\
&= \frac{\r}{2dn-2t-1}
\exp\brac{-\frac{\r}{2}\brac{
\frac{1}{dn-s}+\cdots+\frac{1}{dn-t}}+
O\bfrac{t-s}{(dn-t)^2}}\\
&= \frac{\r}{2dn-2t-1}
\exp\brac{\frac{\r}{2}\brac{\log\frac{dn-t}{dn-s}+O\bfrac{1}{n\d^2}}}\\
&= (1+O(1/\om)) \frac{\r}{2(dn-t)} \bfrac{dn-t}{dn-s}^{\r/2}.
\end{flalign*}
Thus
\begin{flalign*}
\Pr(L(s,k)=t)&= \sum_{s<t_1<\cdots<t_{k-1}<t}\prod_{i=1}^k \Pr(T_v^+=t_i \mid s_i,\r_i=2k-2(i-1)-1)\\
&= (1+O(1/\om))^k ((2k-1)(2k-3)\cdots 1 )\frac{1}{2^k} \sum_{s<t_1<\cdots<t_{k-1}<t}
\prod_{i=1}^k \frac{1}{dn-t_i}\bfrac{dn-t_i}{dn-s_i}^{\r_i/2}\\
&=(1+O(k/\om))
\frac{(2k)!}{k! 2^{2k}} \frac{1}{(dn-s)^{k-1/2}}\frac{1}{(dn-t)^{1/2}}
\sum_{s<t_1<\cdots<t_{k-1}<t} 1\\
&= (1+O(k/\om)) {t-s \choose k-1} \frac{(2k)!}{k! 2^{2k}} \frac{1}{(dn-s)^{k-1/2}} \frac{1}{(dn-t)^{1/2}}
.
\end{flalign*}
\end{proof}
We  use Lemma \ref{Lsk} to upper bound the number of red phases before a given red step $t$.
\begin{lemma}\label{redlem}
Let $J(t)$ be the number of red phases completed at or before step $t=t_R$ of the red walk.
For $\d >0$ and any $t \le dn(1-\d)$, and $\d\ge \om/\sqrt{n}$
\[
\Pr\brac{J(t) \ge \frac{de^{3}}{\d}}= O\!\bfrac1n.
\]
\end{lemma}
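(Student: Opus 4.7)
The plan is to convert Lemma \ref{Lsk} into a statement that each red phase consumes a substantial random fraction of the remaining red-edge capacity, and then use a Chernoff bound on a sum of (stochastically) exponential random variables to rule out too many phases fitting into the interval $[0, dn(1-\delta)]$.

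First, I would re-express things in terms of fractions. Let $a_i = dn - s_i$ be the remaining red capacity at the start of phase $i$, and set $y_i = (L(s_i, k_i) - s_i)/a_i \in [0,1)$. Since $s_{i+1} = L(s_i, k_i) + 1$, we have $a_{i+1} = a_i(1-y_i) - 1$, so iterating gives $a_{N+1} = dn\prod_{i=1}^N (1-y_i) - O(N)$. The event $J(t) \ge N$ forces $a_{N+1} \ge dn - t = dn\delta$. For $N = de^3/\delta$ and $\delta \ge \omega/\sqrt n$ we have $N = O(\sqrt n/\omega) \ll dn\delta$, so absorbing the additive $O(N)$ error,
\[
J(t) \ge N \;\;\Longrightarrow\;\; \prod_{i=1}^N (1-y_i) \ge \delta/2.
\]

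Second, I would derive a distributional bound for each $y_i$ from Lemma \ref{Lsk}. Summing its bound over $t \in (s_i, s_i + \tau a_i]$ and substituting $z = (t-s_i)/a_i$,
\[
\Pr(y_i \le \tau \mid \mathcal F_{s_i}) \le (1+O(k_i/\omega))\,\frac{k_i}{2^{2k_i}}\binom{2k_i}{k_i}\int_0^\tau \frac{z^{k_i-1}}{\sqrt{1-z}}\,dz,
\]
which is $(1+o(1))$ times the Beta$(k_i, 1/2)$ CDF at $\tau$. Since $k_i \ge 1$ always, this is bounded by $(1+o(1))(1-\sqrt{1-\tau})$, the Beta$(1,1/2)$ CDF. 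Writing $Z_i := -\log(1-y_i)$, this translates to $\Pr(Z_i \le u \mid \mathcal F_{s_i}) \le (1+o(1))(1 - e^{-u/2})$, so conditionally on the past $Z_i$ stochastically dominates an $\mathrm{Exp}(1/2)$ random variable. Lemma \ref{Lsk} is marginal in its statement, but its proof is a pure hypergeometric calculation depending only on $(s_i, k_i, dn)$, so it applies verbatim conditional on the history of the edge-process up to step $s_i$. Hence $(Z_1,\dots,Z_N)$ can be coupled to stochastically dominate $N$ i.i.d.\ $\mathrm{Exp}(1/2)$ variables.

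Third, I would apply a Chernoff bound. From Step 1, $J(t)\ge N$ implies $\sum_{i=1}^N Z_i \le \log(2/\delta)$; by stochastic dominance and the standard MGF bound for Gamma$(N,2)$,
\[
\Pr\!\left(\sum_{i=1}^N Z_i \le x\right) \le (1+o(1))\left(\frac{ex}{2N}\right)^N e^{-x/2}.
\]
With $N = de^3/\delta$ and $x = \log(2/\delta) \le 2\log(1/\delta)$ this becomes
\[
(1+o(1))\left(\frac{\delta\log(2/\delta)}{2de^2}\right)^{de^3/\delta}\!\delta^{1/2}.
\]
A short case analysis on $\delta \in [\omega/\sqrt n, 1)$ -- noting that $\delta\log(1/\delta) \le 1/e$ throughout and that the exponent $de^3/\delta$ grows rapidly as $\delta$ shrinks -- verifies that this is $O(1/n)$, indeed with much room to spare.

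The main technical obstacle is the conditional stochastic-dominance argument: Lemma \ref{Lsk} is phrased as a marginal bound, and one has to check that its proof (a hypergeometric / configuration-model computation) produces an inequality valid conditionally on all of the past of the edge-process, not merely on the marginal pair $(s_i, k_i)$. Beyond that, the accumulated $(1+O(k_i/\omega))$ factors across $N$ phases multiply to $1+o(1)$ provided $Nk_i/\omega = o(1)$, which requires some care but follows because the Chernoff bound above is exponentially stronger than any polynomial correction one needs to absorb.
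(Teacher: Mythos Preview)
Your approach is correct and genuinely different from the paper's. The paper takes a direct union-bound route: it crudely bounds $\Pr(L(s_i,k_i)=t_i)\le d/(dn\delta)$ from Lemma~\ref{Lsk}, multiplies over phases, and then sums over all endpoint sequences $t_1<\cdots<t_{J-1}<t_J=t$, obtaining essentially $\binom{t}{J-1}(d/(dn\delta))^J = O(n^{-1})(de/(J\delta))^J$, which for $J=de^3/\delta$ has base $<1$. Your route is more probabilistic and in fact sharper: recognising that the normalising constant $\frac{k}{2^{2k}}\binom{2k}{k}$ in Lemma~\ref{Lsk} is exactly $1/B(k,\tfrac12)$ lets you read off that each phase consumes a (conditional, up to $1+o(1)$) Beta$(k_i,\tfrac12)\succeq$ Beta$(1,\tfrac12)$ fraction of the remaining red capacity, whence $Z_i=-\log(1-y_i)$ dominates Exp$(\tfrac12)$ and a Gamma-tail Chernoff bound replaces the combinatorial sum. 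The conditioning point you flag is indeed the crux, and your resolution is right: the proof of Lemma~\ref{Lsk} is a configuration-model count depending only on the number of unpaired points at the current vertex and in total, both measurable in the history, and the intervening blue phases do not consume red points. One small caveat, shared with the paper's argument: when $\delta$ is bounded away from zero, $N=de^3/\delta$ is a constant and neither your Chernoff bound nor the paper's union bound actually produces $O(1/n)$ --- your phrase ``a short case analysis verifies that this is $O(1/n)$'' does not go through there. This is immaterial for the only application (Lemma~\ref{useful}(ii)), where $\delta=o(1)$ and both bounds are far stronger than needed.
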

\begin{proof}
The $J$ red phases start at $s_1,\cdots,s_J$ and end at $t_1,\cdots,t_J$, where $s_1=0$,
$t_J=t$, and $s_i=t_{i-1}+1$. The total number of excursion rounds is $K=k_1+\cdots+k_J$, where $J <K\le dJ$. Let $\ul \t=(t_1,...,t_J)$ and $\ul \k
=(k_1,...,k_J)$. Let $\cE(\ul \t, \ul \k)$ be the event that
$L(s_i,k_i)=t_i, \; i=1,...,J$. Then
\[
\Pr(\cE(\ul \t, \ul \k))= \prod_{i=1}^J \Pr(L(s_i,k_i)=t_i).
\]
To simplify \eqref{Lskt}, note that, as ${2k \choose k} \le 2^{2k}$ and $k \le d$,
\[
\frac{k}{2^{2k}}{2k \choose k} \le k \le d.
\]
Let $s=adn,t=bdn$ where $0 \le a \le b\le (1-\d)$. Then, as $(t-s)\le (dn-s)$,
\[
\frac{(t-s)^{k-1}}{
(dn-s)^{k-1/2}(dn-t)^{1/2}} =\bfrac{t-s}{dn-s}^{k-1}\!\! \frac{1}{(dn-s)^{1/2}
(dn-t)^{1/2}}\le \frac{1}{dn \d}.
\]
Thus
\beq{prekly}
\Pr(\cE(\ul \t, \ul \k))\le   (1+O(K/\om))\frac{d^J}{(dn)^J \d^J}.
\eeq
For a given realization of the edge-process, the sequence $\ul \k$ is a fixed input determined by the blue walk, and the right hand side of \eqref{prekly} is independent of the value of this input.
For any given red step $t_J=t$, let $J(t)=J(t,\ul \k)$ be the number of red phases 
completed at or before step $t$. Then,
\begin{flalign*}
\Pr(J(t), \ul \k) &= \sum_{\ul \t}\Pr(\cE(\ul \t, \ul \k))
\le (1+O(K/\om)){t \choose J-1} \frac{1}{n^J \d^J}\\
&=O(1+O(K/\om)) \frac{t^{J-1}e^J}{n^J \d^J J^J}= O\!\bfrac{1}{n} \bfrac{de^{1+O(d/\om)}}{J\d}^J.
\end{flalign*}
The last line follows from $K \le dJ$ and $t \le dn$. Finally, we put $J=de^{2}/\d$.
\end{proof}
\bignote{$\Pr(J(t), \ul \k)$ needs to changed to something meaningful.\\
Ok have done this}
\subsection{Proof of Theorem \ref{newedge}}\label{realproof}\label{vacnet}

Let $T_G$ be a mixing time for a random walk on $G$,
 such that, for $t\geq T_G$,
\begin{equation}\label{44}
\max_{u,x\in V}|P_{u}^{(t)}(x)-\pi_x| \le \frac{1}{n^3}.
\end{equation}

We use the following results from \cite{BCF}.
\begin{lemma}\label{crude}
Let $T_G$ be a mixing time of a random walk $W_u$ on $G$ satisfying
\eqref{44}. For any start vertex $u$ let $\ul A_{t,u}(v)$ be the event that
$W_u$ has not visited vertex $v$ at or before step $t$. Then
\[
\Pr(\ul A_{t,u}(v)) \le  e^{-\rdown{{t}/{(T_G+3\E_{\pi}H_v)}}},
\]
where $\E_{\pi}H_v$ is the hitting time of $v$ starting from stationarity.
\end{lemma}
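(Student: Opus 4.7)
My plan is to partition $[0,t]$ into $k := \lfloor t/T^*\rfloor$ disjoint blocks, each of length $T^* := T_G + 3\E_\pi H_v$, and to show that in each block, irrespective of the starting state, the walk visits $v$ with conditional probability at least $1-1/e$. Chaining the blocks via the Markov property will then give $\Pr(\underline A_{t,u}(v)) \le (1/e)^k = e^{-\lfloor t/(T_G+3\E_\pi H_v)\rfloor}$.

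For a single block beginning in an arbitrary state $x$, I would first let the walk run for $T_G$ mixing steps. By \eqref{44}, the resulting distribution $\mu$ satisfies $|\mu(y) - \pi_y| \le 1/n^3$ for every $y$, uniformly in $x$. From exact stationarity, Markov's inequality applied to the hitting time gives $\Pr_\pi(H_v > 3\E_\pi H_v) \le \E_\pi H_v/(3\E_\pi H_v) = 1/3$. A trivial coupling between $\mu$ and $\pi$ transfers this bound with additive error $1/n^3$, so $\Pr_\mu(H_v > 3\E_\pi H_v) \le 1/3 + 1/n^3 \le 1/e$ for all sufficiently large $n$. Hence, starting from any state $x$, the walk fails to visit $v$ during the $T^*$ steps of the block with probability at most $1/e$.

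To chain the blocks, let $\cE_j$ be the event that $v$ is unvisited during block $j$. By the Markov property applied at block boundaries, the single-block estimate above gives $\Pr(\cE_j \mid X_{(j-1)T^*} = x) \le 1/e$ uniformly in $x$. Conditioning successively on $X_{(k-1)T^*}, X_{(k-2)T^*}, \ldots, X_0$ and taking expectations yields $\Pr(\cE_1 \cap \cdots \cap \cE_k) \le (1/e)^k$. Since $\underline A_{t,u}(v) \subseteq \cE_1 \cap \cdots \cap \cE_k$, the desired bound follows immediately.

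The only real ``obstacle'' is cosmetic: one must ensure that the $n^{-3}$ total-variation slack from \eqref{44} does not spoil the single-block estimate. Because Markov's inequality produces failure probability $1/3$, strictly below $1/e \approx 0.368$, there is ample room to absorb the mixing error, and no sharper hitting-time bound is required. Everything else is a routine application of the Markov property at block boundaries.
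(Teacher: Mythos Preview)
Your argument is correct. The paper does not actually prove this lemma; it is quoted from \cite{BCF} without proof, and your block-decomposition argument (mix for $T_G$ steps, then apply Markov's inequality to the hitting time from stationarity, then chain blocks via the Markov property) is exactly the standard proof of such a bound. One minor slip: the additive error incurred when transferring the hitting-probability bound from $\pi$ to the near-stationary distribution $\mu$ is controlled by the total variation distance $\|\mu-\pi\|_{TV}\le \tfrac12\sum_y|\mu(y)-\pi_y|\le n/(2n^3)=1/(2n^2)$, not $1/n^3$ as you wrote; this is harmless, since $1/3+1/(2n^2)<1/e$ still holds for all $n$ large enough.
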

It follows from \eqref{Tlogn} that we can take $T_G=120\log n$.
\begin{lemma}\label{HGam}
Let $G=(V,E)$, let $|E|=m$. Let $S \seq V$, and let $d(S)$ be the degree of $S$. Then $\E_{\pi} H_{S}$,
the expected hitting time of $S$ from stationarity satisfies
\[
\E_{\pi} H_{S} \le \frac{2m}{ d(S)(1-\l(G))}.
\]
\end{lemma}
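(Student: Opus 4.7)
The plan is to reduce to the single-vertex case by contraction, and then apply a spectral bound on hitting times.

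First, I would contract $S$ to a single vertex $s^{*}$ in $G$, retaining all edges (internal edges of $S$ become loops at $s^{*}$), yielding a multigraph $G'$ with the same number $m$ of edges. The simple random walk on $G'$ has stationary distribution $\pi'$ compatible with $\pi$ in the sense of Lemma \ref{contract}: $\pi'(s^{*}) = d(S)/(2m) = \pi(S)$ and $\pi'(u) = \pi(u)$ for $u \neq s^{*}$. Walks on $G$ avoiding $S$ correspond bijectively (measure-preservingly) to walks on $G'$ avoiding $s^{*}$ exactly as in Lemma \ref{contract}, so $\E_\pi H_S = \E_{\pi'} H_{s^{*}}$. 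Moreover, as noted in the discussion preceding Lemma \ref{contract}, contracting a set of vertices cannot decrease the spectral gap, so $1-\lambda_2(G') \ge 1-\lambda_2(G) \ge 1-\lambda(G)$.

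The heart of the argument is the following single-vertex hitting-time bound: for any reversible chain with stationary $\pi'$ and second eigenvalue $\lambda_2$, every vertex $v$ satisfies
\[
\E_{\pi'} H_v \;\le\; \frac{1-\pi'(v)}{\pi'(v)\,(1-\lambda_2)}.
\]
To prove this I would work with the Dirichlet form. Set $g(u)=\E_u H_v$ for $u\neq v$ and $g(v)=0$; the function $g$ satisfies $((I-P')g)(u)=1$ for $u\neq v$. A direct calculation using $g(v)=0$ gives $\langle g,(I-P')g\rangle_{\pi'} = \E_{\pi'} H_v$. The Poincar\'e inequality then yields
\[
\E_{\pi'} H_v \;=\; \langle g,(I-P')g\rangle_{\pi'} \;\ge\; (1-\lambda_2)\,\|g-\E_{\pi'}g\|_{\pi'}^{2} \;\ge\; (1-\lambda_2)\,\pi'(v)\,\|g\|_{\pi'}^{2},
\]
where the last step uses $g(v)=0$ together with the Cauchy--Schwarz estimate $(\E_{\pi'} g)^{2}\le (1-\pi'(v))\|g\|_{\pi'}^{2}$. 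A second application of Cauchy--Schwarz gives $(\E_{\pi'} H_v)^{2} \le (1-\pi'(v))\,\|g\|_{\pi'}^{2}$, and combining the two inequalities proves the displayed bound.

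Applying this with $v=s^{*}$, $\pi'(s^{*})=d(S)/(2m)$, and $1-\lambda_2(G')\ge 1-\lambda(G)$ delivers
\[
\E_\pi H_S \;=\; \E_{\pi'} H_{s^{*}} \;\le\; \frac{1}{\pi'(s^{*})(1-\lambda(G))} \;=\; \frac{2m}{d(S)(1-\lambda(G))},
\]
which is the claim. The only substantive step is the Poincar\'e/Dirichlet-form estimate; that is the main obstacle in the sense that it is the one place the spectral gap is actually invoked, while the contraction step is essentially bookkeeping ensuring that $\pi$ and $\pi'$ are compatible so Lemma \ref{contract} applies.
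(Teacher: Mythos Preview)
The paper does not prove Lemma~\ref{HGam}; it is quoted from \cite{BCF} without argument. Your proposal supplies a complete and correct proof, so there is nothing in the paper to compare it against line by line.

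A couple of minor remarks on your write-up. First, you cite Lemma~\ref{contract} for the identity $\E_\pi H_S=\E_{\pi'}H_{s^*}$, but that lemma carries a $1+O(n^{-3})$ error and is phrased for avoidance probabilities on a window $[T,t]$; what you actually need (and what you really use) is the exact path bijection between walks in $G$ avoiding $S$ and walks in $G'$ avoiding $s^*$, which gives the equality with no error. Second, the step ``contraction does not decrease the spectral gap'' deserves one line of justification: extending any $f'$ on $V(G')$ to $f$ on $V(G)$ by making it constant on $S$ preserves both the Dirichlet form and the variance, so the variational formula for $1-\lambda_2$ immediately gives $1-\lambda_2(G')\ge 1-\lambda_2(G)$; the paper's pointer to \cite{LPW} is for this fact. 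With these two clarifications your argument is clean, and the Dirichlet-form/Poincar\'e derivation of the single-vertex bound $\E_{\pi'}H_v\le (1-\pi'(v))/(\pi'(v)(1-\lambda_2))$ is the standard route (it is essentially the Aldous--Fill bound), so your approach is in line with how such lemmas are normally proved.
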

In \eqref{nice2} we used the crude bound $\l_2 \le 29/30$.
Using this in Lemma \ref{HGam} gives an upper bound $\E_{\pi} H_{S} \le 30 n/|S|$. Lemma \ref{crude} implies that
\beq{2lemmas}
\Pr\ul A_{t,u}(v)) \le  \exp\brac{-\frac{t}{120(n/|S|+\log n)}}.
\eeq

\paragraph{Various useful properties.}
\begin{lemma}\label{useful}
\begin{enumerate}[(i)]
\item \label{i} Let $S(t)$ be the vertex set of the vacant net at red step $t$. Then for any
$t=dn(1-\d)$, $|S(t)| \ge \d n/2$.
\item \label{ii} Let $t_R=dn(1-\d)$ where  $\sqrt{(\om \log n)/n}\le \d=o(1)$.Then
w.h.p. by red step $t_R$ at most $t(\d)=dn(1-\d +O(1/\om))$ steps of the edge-process have elapsed.
\item \label{iii} There exists red step $t_1=dn(1-\d)$ with
 $\d=\Om(\sqrt{(\om \log n)/n})$,
 such that w.h.p. by red step $t_1$,
for $k \ge 3$, $R_{2k}(t_1)=0$, and $R_4(t_1)=O(\om \log n)$. The corresponding step of the edge-process is $t=dn(1+O(1/\om))$.
\end{enumerate}
\end{lemma}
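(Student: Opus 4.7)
Part (i) follows by double counting: at red step $t=dn(1-\delta)$ the vacant net has $dn-t=dn\delta$ edges, and since each vertex has red degree at most $r=2d$, counting edge-endpoints gives $|S(t)|\ge 2(dn-t)/(2d)=n\delta\ge \delta n/2$.

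For part (ii), I would combine Lemma \ref{redlem} with the hitting-time estimate \eqref{2lemmas}. By Lemma \ref{redlem}, with probability $1-O(1/n)$ the number $J$ of red phases completed by red step $t_R$ satisfies $J\le de^{3}/\delta$, so at most $J$ blue phases have occurred. The crucial observation is that during a single blue phase no red edge is consumed, hence the set $S$ of vertices incident to a red edge is static throughout that phase; and since red edges only decrease with time, at the start of any blue phase completed by red step $t_R$ the remaining red-edge count is $\ge dn\delta$, so by (i) we have $|S|\ge n\delta/2$. Applying \eqref{2lemmas} to $S$ (via contracting $S$ to a single vertex in Lemmas \ref{crude} and \ref{HGam}), the probability that a given blue phase exceeds $\tau$ steps is at most $\exp(-\tau/(120(2/\delta+\log n)))$. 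Choosing $\tau=C\log n\cdot(2/\delta+\log n)$ for a sufficiently large constant $C$ makes this $o(1/n)$, and a union bound over the $O(1/\delta)$ blue phases yields
\[
t_B=O\!\brac{\frac{\log n}{\delta^{2}}+\frac{(\log n)^{2}}{\delta}}
\]
w.h.p. Under the hypothesis $\delta\ge\sqrt{\omega\log n/n}$ the first term is $O(n/\omega)$ and dominates the second, giving $t=t_R+t_B=dn(1-\delta+O(1/\omega))$ as claimed.

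For part (iii), I would set $\delta=c\sqrt{\omega\log n/n}$ for a suitable constant $c$ and invoke \eqref{answer}. Since $(t_1/(dn))^{d-k}\to 1$, the formula simplifies to $N_k(t_1)\sim n\binom{d}{k}\delta^{k}$. For each $k\ge 3$ this gives $N_k(t_1)=O(n\delta^{3})=O((\omega\log n)^{3/2}/\sqrt{n})=o(1)$, so $R_{2k}(t_1)=0$ w.h.p. by Markov's inequality. For $k=2$ we have $N_2(t_1)=\Theta(\omega\log n)$, and concentration of $R_4(t_1)$ around $N_2(t_1)$ within additive error $o(N_2(t_1))$ follows from a Chebyshev/second-moment argument essentially identical to the one used in the proof of Lemma \ref{th4}(iii): the event ``$v$ has red degree $4$'' at two nice vertices at graph distance $\ge \omega'$ decouples to leading order. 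Combined with part (ii) applied at this $\delta$, and noting $\delta=O(1/\omega)$ in the admissible range of $\omega$, the edge-process step corresponding to $t_1$ is $dn(1+O(1/\omega))$.

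The main obstacle is part (ii): one must make the union bound over the $\Theta(1/\delta)$ blue phases fine enough that the aggregate blue length is $O(n/\omega)$ rather than a larger polynomial of $1/\delta$. This is precisely what pins down the threshold $\delta=\Omega(\sqrt{\omega\log n/n})$ used in (iii); choosing $\delta$ any smaller would let the $(\log n)/\delta^{2}$ total blow past $n/\omega$.
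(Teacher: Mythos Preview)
Your treatment of parts (i) and (ii) matches the paper's proof essentially line for line. The gap is in part (iii).

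You set $\delta=c\sqrt{\omega\log n/n}$, compute $N_k(t_1)\sim n\binom{d}{k}\delta^{k}$, and for $k\ge 3$ invoke Markov to conclude $R_{2k}(t_1)=0$. But $N_k$ is the \emph{expectation of the Pairs-process count}, not $\E R_{2k}$. The only link the paper establishes between $R_{2k}$ and $N_k$ is the martingale bound \eqref{bigtime}, whose additive error is $O(\sqrt{n\log n})$. At your choice of $\delta$ this error swamps $N_k(t_1)$ for every $k\ge 2$ (indeed $\sqrt{n\log n}\gg \omega\log n\gg N_k(t_1)$), so neither the $k\ge 3$ claim nor the $k=2$ bound follows. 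One could try to transfer the Pairs-process estimate $\Pr_Y(\ul N_k(t_1)\ge 1)\le N_k(t_1)$ to the red walk via the measure comparison $\Pr(W)\le O(\sqrt m)\Pr(Y)$, but the $O(\sqrt n)$ factor kills the $o(1)$. Citing the second-moment argument of Lemma~\ref{th4}(iii) is likewise illegitimate: that lemma concerns the simple random walk and its decoupling rests on the subdivide--contract machinery of Section~\ref{unvisit}, which has no analogue for the edge-process.

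The paper gets around this with a two-stage argument. It first passes to an \emph{earlier} red step $t=dn(1-\delta_0(k))$ with $\delta_0(k)=(\log n/n)^{1/(2k)}$, chosen precisely so that $N_k(t)=\Theta(\sqrt{n\log n})$ matches the martingale error and \eqref{bigtime} legitimately gives $R_{2k}(t)=O(\sqrt{n\log n})$. Then, for each of these $O(\sqrt{n\log n})$ surviving vertices of red degree $2k$, it computes directly in the configuration model the probability that the red walk fails to hit that vertex during the next $dn\delta_1$ red steps (with $\delta_1=\sqrt{\omega\log n/n}$), namely $\prod_{s=t}^{t'}(1-k/(dn-s))=O((\delta_1/\delta_0)^k)$. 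Multiplying by the $O(\sqrt{n\log n})$ count gives $o(1)$ for $k\ge 3$ and $O(\sqrt{\omega}\log n)$ for $k=2$, after which a final Markov step yields $R_4(t')\le \omega\log n$ w.h.p. This survival-probability step, bridging from the scale where the martingale is informative down to the target scale $\delta_1$, is the idea missing from your proposal.
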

\begin{proof}
{\em Part (\ref{i}).}
At  red step $t=dn(1-\d)$ there are, by definition, $\d dn$ red edges.
Let $S(t)=\{v \in V: d_R(v,t)>0\}$ denote the vertex set of the vacant net at red step $t$. Then deterministically
\[
|S(t)| \ge \d dn/r= \d n/2.
\]
{\em Part (\ref{ii}).}
At $t=dn(1-\d)$ the vertex set of the vacant net is of size $|S(t)|\ge \d n/2$. Let $S=S(t)$ in Lemma \ref{HGam}. Contract $S(t)$ to a vertex $v(S)$. Let $\t$
denote a step of the blue walk, and apply Lemma \ref{crude} at  $\t= A\log n/\d$ for some large $A$.
Using \eqref{2lemmas}, and choosing $A=400$, the probability $P_B(\t)$ that a blue phase lasts more than $T_G+\t\leq 2\t$ steps is upper bounded by
\[
P_B(\t) \leq\exp\set{-\frac{2A\log n/\d}{120(\log n+2/\d)}}=O(1/n^2).
\]
Let $t_R$ be the end of the first red phase at which $|S(t)| \le n \d$.
Let $t_B$ be the number of blue steps before $t_R$, then
\[
t_B=\sum_{i \le J} t_{B,i}\leq  2J(t_R)\t= O \bfrac{\log n}{\d^2}.
\]
Thus provided $\d \ge \sqrt{\om\log n/n}$, $t_B=O(n /\om)$ and
\[
t(\d) =t_R+t_B= dn(1-\d)+ O(n/\om)=dn (1-\d +O(1/\om)).
\]
{\em Part (\ref{iii}).}
Let $t=dn(1-\d_0(k))$ where $\d_0(k)= (\log n/n)^{1/2k}$. At $t_R=t$,  \eqref{answer} and \eqref{martN} imply that  w.h.p.
\[
R_{2k}(t)= O(n \d_0^k) +O(\sqrt{n \log n})=O(\sqrt{n \log n}).
\]
Next choose $\d_1=\sqrt{\om \log n/n}$. Let $B_{2k}(t)$ be the set of vertices of red degree $2k$ at red step $t$.
Let $t'=t+dn\d_1$ and let $P_B(v)$ be the probability the red walk did not visit $v$ during $ dn\d_1$ steps. Thus
\begin{align*}
P_B(v) & = \prod_{s=t}^{t'} \brac{1-\frac{k}{dn-s}}
= O(1) \exp \brac{-k \log \frac{dn-t}{dn-t'}}\\
&=O(1) \exp\brac{-k \log \d_0/\d_1}=O\brac{\om^{1/2} (\log n/ n)^{(k-1)/2}}.
\end{align*}
Thus for $k=d,d-1,...,3$,
\[
\Pr(R_{2k}(t') \ne 0) = O\brac{\sqrt{\om n \log n} \;(\log n/n)^{(k-1)/2}}=o(1).
\]
For $k=2$
\[
\E R_4(t') = O(\sqrt{\om}\log n),
\]
and thus
\[
\Pr(R_4(t') \ge \om \log n)=O(1/\sqrt{\om}).
\]
By the previous part of this lemma, the number of blue steps  $t_B$ elapsed at $t'$ is $O(n/\om)$. This corresponds to a step $t$ of the edge-process where
\[
t=t'+t_B \le dn(1+O(1/\om)).
\]
\end{proof}

\paragraph{Vacant set size and  threshold.}
We recall the discussion in Section \ref{redstep} where the size, and number of edges of the vacant set at any red step $t_R$ are given by \eqref{vacs} and \eqref{nacs} respectively. Theorem
\ref{newedge} (i) then follows from Lemma \ref{useful}\eqref{ii}.

Considering the threshold, let $t^*=u^*n$ be the red step given by $u^*$ in  \eqref{rs}.
We prove that at steps $t^*(1-\e)$ and $t^*(1+\e)$ respectively of the edge-process, the vacant set is super-critical and sub-critical respectively.
At red step $t^*$,
\[
|\cR(t^*)|=R_{2d}(t^*)= \ooi n \bfrac{1}{2d-1}^{\frac{d}{d-1}}= \Theta(n),
\]
and $|\cR(t)|$ is concentrated.
In Section \ref{redstep},   using the Molloy-Reed condition and \eqref{emm2}, we proved that at  red step $t\le t^*(1-\e)$ the giant component $C_1(t)=\Th(|\cR(t)|)=\Th(n)$ w.h.p.
Similarly at red step $t^*(1+\e)$ for some small $\e>0$, the maximum component size of the vacant set at $t^*(1+\e)$ is $O(\log n)$ w.h.p.
Let $d(1-\d)=u^*(1+\e)$, then the corresponding $\d$ is constant. By Lemma \ref{useful}(\ref{ii}), red step $t_R=t^*(1+\e)$
corresponds to step $t=t^*(1+\e+O(1/\om))$ of the edge-process.
Thus at step $t=t^*(1+O(\e))$
the maximum component size is $O(\log n)$ w.h.p. and the
graph of the vacant set is subcritical.
This completes the proof of Theorem
\ref{newedge}  (iii).

\paragraph{Vertex cover time.}
For the proof of Theorem
\ref{newedge}  (iii), that w.h.p. $T_{cov}^V(G) \sim dn$, we consider the cases $r=4$ and $r \ge 6$ separately.

{\em Case $r=2d,\; d \ge 3$.}
At red step $t_R=dn(1-\d)$ where
$\d=1/ n^{1/2d}$, then $R_{2d}(t_R)=\Om(n^{1-1/2d})$.
By Lemma \ref{useful}(\ref{ii}) the corresponding step of the edge-process is
$t'=dn(1+O(1/ \om))$. However by Lemma \ref{useful}(\ref{iii}), at step $t_1=dn(1+O(1/\om))$ of the edge-process $R_{2d}(t_1)=0$ and the vacant set is empty. Thus the vertex cover time $T_{cov}^V(G) \sim dn$.

{\em Case $r=4$.} The cover time can be deduced from the proof of Lemma
\ref{24T} (see below) that $\wh t \sim dn$ is the threshold for the vacant net. The relevant facts from Lemma \ref{24T} are the following. At $t=\wh t (1-o(1))$
  there are vertices of red degree 4 w.h.p. For some $t\le \wh t \ooi$ the
last vertex of red degree 4 disappears. Thus, for $r=4$ the vacant set becomes empty at some $t \sim \wh t {\red \sim} dn$. We remark that there could still be  some isolated red cycles, in which case the vacant net is nonempty.

\bignote{Could there be some red cycles hanging on?\\
Indeed, and it is hard to  (provably) get rid of them, but they dont affect the vertex cover time}

\paragraph{Vacant net. Supercritical regime.}

From Section \ref{redstep} the threshold for the vacant net is at
 red step $\wh t \sim dn$.
Choose a red step $t_R$, where
$t_R=dn(1-\d)$, $\d \ge 0$ constant.
By Lemma \ref{useful}\eqref{i},  the vertex set $S(t_R)$ of the vacant net is of size  $|S(t_R)| \ge \d n/2$.
By Lemma \ref{useful}\eqref{ii},
the corresponding step $t=t_R+t_B$ of the edge-process is $t_R\ooi$ w.h.p.

\paragraph{Vacant net. Subcritical regime.}

Because the vacant net becomes sub-linear in size near $dn \sim \wh t$,
the time taken by the blue walk to reach unvisited edges increases rapidly.
Thus more work is needed to prove the vacant net
has maximum component size $O(\log n)$ at some step $t=dn\ooi$
of the edge-process.

\begin{lemma}\label{24T}
There is a step $t$ of the edge-process,
where $t=dn(1+o(1))$ such that w.h.p. at step $t$
all components of the vacant net have size $O(\log n)$.
\end{lemma}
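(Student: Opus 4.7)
The plan is to apply Lemma~\ref{useful}(iii) at red step $t_1 = dn(1-\delta_1)$, $\delta_1 = \Theta(\sqrt{\omega\log n/n})$, for which the corresponding edge-process step is $t_* = dn(1+O(1/\omega)) = dn(1+o(1))$. At $t_1$ the vacant net has $N := dn\delta_1 = \Theta(\sqrt{\omega n\log n})$ edges; all but $M = O(\omega\log n)$ non-isolated vertices have red degree $2$, and the rest have red degree $4$. By the edge-process version of Lemma~\ref{walkconfig}, conditional on the walk's history the vacant net is a uniform random multigraph with this degree sequence, and decomposes into pure cycles through degree-$2$ vertices and (w.h.p.) a single connected ``core'' component carrying all $M$ degree-$4$ vertices together with the paths of degree-$2$ vertices joining them.

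\textbf{Components are consumed wholesale.} The key observation is that every connected component $C$ of the vacant net has all degrees even, so is Eulerian. When the walker, during a blue phase, first enters any vertex $v$ of $C$, the ensuing red phase performs an Eulerian tour of $C$ and consumes \emph{every} edge of $C$ before terminating: at each intermediate visited vertex $w$, the red degree just after arrival is odd (it was even beforehand), so the walker always has a red edge to continue along, and the phase only ends by returning to $v$ with $d_R(v)=0$. Therefore ``$C$ is entirely destroyed by edge-process time $t$'' is equivalent to ``the walker enters at least one vertex of $C$ during $(t_*,t]$''.

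\textbf{Hitting the long components in $o(n)$ steps.} It remains to bound the extra number $\Delta t$ of edge-process steps needed to hit every component of size $\ge \log n$ at $t_*$. By Lemma~\ref{HGam} the blue hitting time of a component $C$ from near-stationarity is $O(n/|C|)$, and its Eulerian consumption costs a further $|C|$ steps. The core component has size $\Theta(N)$, giving a hit-plus-consumption cost $O(n/N + N) = O(\sqrt{n/(\omega\log n)}+\sqrt{\omega n\log n}) = o(n)$. For the pure-cycle part, the random-matching structure creates cuts at degree-$4$ points with density $\sim 2M/N$, yielding w.h.p.\ $O(\log n)$ pure cycles of length $\ge \log n$ with total size $O(N\log n/M)=O(\sqrt{n\log n/\omega})$; the corresponding sum of hit-plus-consumption costs is also $o(n)$. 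A Chebyshev/interpolation argument in the spirit of Lemma~\ref{th4} upgrades these expectations to w.h.p., and choosing $t$ to be the end of the red phase in which the last long component is consumed yields $t = t_*+\Delta t = dn(1+o(1))$ with all remaining components of size $<\log n$.

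\textbf{Main obstacle.} The technically delicate step is the Eulerian-tour claim: one must verify that a red phase entering the degree-$4$ core really does traverse all edges of the core component before returning to its start vertex, with no possibility of leaving a red ``residue'' behind. I would prove this directly from the parity invariant (every red degree stays even throughout the process), which forces the walker's next move always to be along a red edge whenever its current vertex has positive red degree, and so the phase can only terminate at the start vertex once every edge of the component has been used.
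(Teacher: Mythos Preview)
Your central claim---that a red phase entering a component $C$ necessarily performs an Eulerian tour of $C$ and consumes all of its edges---is false, and the parity argument you sketch does not rescue it.

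Take the ``figure-eight'': two cycles $C_1,C_2$ sharing a single vertex $w$ (so $w$ has red degree $4$, all other vertices red degree $2$). Suppose the blue walk lands at some $v\in C_1$, $v\neq w$. The red phase begins at $v$ with $d_R(v)=2$. With positive probability the walk traverses $C_1$, passes through $w$ choosing (with probability $1/3$) the edge that stays in $C_1$, and returns to $v$. At that moment $d_R(v)=0$, so the red phase \emph{terminates}---yet $C_2$ is entirely red, and $w$ still has $d_R(w)=2$. Your parity invariant is respected (all red degrees are even at phase-end), but ``even'' does not mean ``zero''. A red phase is only guaranteed to be a closed walk exhausting the edges at its start vertex; it need not be an Eulerian circuit of the whole component.

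This invalidates the ``components are consumed wholesale'' step and hence your time accounting: hitting the core once is not enough, since each red incursion may leave behind a still-connected Eulerian residue containing degree-$4$ vertices. The paper does not attempt this shortcut. Instead it (i) shows that at $t_1$ every $(2,4)$-cycle has length $\Theta(n_2/n_4)$ and bounds the total number of $(2,4)$-cycles (in any decomposition) by roughly $(cn_4)^{n_4}$, (ii) shows that $o(n)$ further blue steps suffice to hit every such cycle, which forces all degree-$4$ vertices to disappear, and then (iii) deals separately with the remaining random $2$-regular graph, using $\E C(i)=O(1/i)$ and another $o(n)$ blue steps to kill all cycles of length $\ge\log n$. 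The three-stage structure is there precisely because a single red phase does not eat a whole component.
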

\begin{proof}
The proof is in three parts. In the first part we count up the number of blue steps occurring before  red time $t_1=dn(1-\d_1)$ where $\d_1=\sqrt{\om \log n/n}$. At $t_1$ the vacant net  consists mainly of vertices of red degree 2, with a few vertices of red degree 4. In the second part, we prove that after a further $t_B=o(n)$ steps of the blue walk we have  removed all vertices of red degree 4, thus destroying any complex components of the vacant net.
The vacant net now  consists entirely of red cycles.
In the third part we use a further $t_B=o(n)$ steps of the blue walk to remove
any red cycles of length at least $\log n$.

\paragraph{Part 1.}

Let $t_1$ be red step $dn(1-\d_1)$ where $\d_1=\sqrt{\om \log n/n}$. By Lemma \ref{useful}(\ref{ii}) the corresponding step of the edge-process is
$t=d n (1+O(1/\om))$.
At any red step $t_R=dn(1-\d)$, the maximum component size is at most the number of red edges $dn\d$.
Thus at step $t$ of the edge-process corresponding to $t_1$ the giant component is of size
\[
C_1(t)=O( n\d_1)=O(\sqrt{\om n \log n}).
\]
By Lemma \ref{useful}(\ref{iii})
the vacant net $\wh\G(t_1)$
consists of $R_{2i}(t_1)=n_{2i}$ vertices of red degree $2i$.
For some $c_2$ constant, w.h.p.
\beq{nii}
n_2= c_2  \sqrt{n \om \log n},\qquad n_4 \le \om \log n,
\qquad n_{2i}=0, \;\; i\ge 3.
\eeq

\paragraph{Part 2.}
It follows from \eqref{nii} that at red step $t_1$
the vacant net consists of 2-cycles (cycles with vertices of red degree 2) and complex components  with vertices of degree 2 and 4.
Such components are Eulerian, and can be decomposed (non-uniquely) into  $(2,4)$-cycles (cycles where all vertices have red degree 2 or 4 in the vacant net). We  prove that after $t_B=o(n)$ further blue steps,
the blue walk has visited every $(2,4)$-cycle in the vacant net $\wh \G(t_1)$. If so,
the vacant net is either empty or consists entirely of red 2-cycles. To assume otherwise leads to a contradiction.

We count $(2,4)$-cycles in the configuration model. Let $\F(m)=(2m)!/m!2^m$.
Using ${2k \choose k}/2^{2k}=\Theta(1/(1+\sqrt{k}))$, it follows that
\beq{good1}
\frac{m!}{(m-s)!}2^s \frac{\F(m-s)}{\F(m)}=\Th\brac{\sqrt{\frac{m}{m-s+1}}}.
\eeq

Let $C(i,a,b)$ be the number of $(2,4)$-cycles of length $i=a+b$ and containing
$a$ vertices of red degree 2, and $b$ vertices of red degree 4. 
Thus
\[
\E C(i,a,b) = {n_2 \choose a}{n_4 \choose b}\frac{(i-1)!}{2}
{4 \choose 2}^b 2^i \frac{\F(n_2+2n_4-i)}{\F(n_2+2n_4)}.
\]
Let $m=n_2+2n_4$, $s=i$ in \eqref{good1}. Then, 
\begin{flalign*}
\E C(i,a,b) &= \Th\brac{\sqrt{\frac{n_2+2n_4}{n_2+2n_4-i}}}
\frac{1}{i} {i \choose b} 6^b
n_4^b
\frac{(n_2)_a}{(n_2+2n_4)_i}\\
&= 
\Th\brac{\sqrt{\frac{n_2+2n_4}{n_2+2n_4-i}}}
\frac{1}{i} {i \choose b} 6^b
\bfrac{n_4}{n_2+2n_4}^b e^{O\bfrac{(a+b)b}{n_2+2n_4}} \\
&= \Th\brac{\sqrt{\frac{n_2+2n_4}{n_2+2n_4-i}}}\frac{O(1)}{i}
\brac{\frac{ie^{1+O(a/n_2)}}{b}\frac{6n_4}{n_2}}^b.
\end{flalign*}

\bignote{as it said before,' after some work', which  was a bit more than what was given, so I put the details in. \\It needs to be accurate as we need to add up the number of
(2,4)-cycles of all sizes and bound it by $n_4^{n_4}$.\\
The decomposition of the euler tour into cycles is not unique. \\We need to be sure that some suitable number of steps of the blue walk no (2,4)-cycle exists in any decomposition.
 }

Thus  for $2$-cycles (case $b=0$) we have $\E C(i,i,0)=O(1/i)$.
If $b>0$ then for some $\b<1/7e$,
\[
\sum_{i< \b n_2/n_4} C(i,a,b)=o(1),
\]
and thus w.h.p. all $(2,4)$-cycles are size at least $\Th(n_2/n_4)$.
The expected number of all  $(2,4)$-cycles is
\[
\sum_{a \le n_2} \sum_{b \le n_4}\sum_{i \le n_2+n_4} \E C(i,a,b)=
O\brac{n_2^{3/2} n_4^{1/2}\; (c n_4)^{n_4}}.
\]
Thus w.h.p. the total number $L(t_1)$ of such cycles of all sizes is at most $L(t_1)=O(n_2^{3/2}(n_4)^{n_4+1} \log n)$.

Let $\cE(t_b)$ be the event that
\[
\cE(t_B)= \{\text{After $t_B+T_G$ further blue steps, there exists an unvisited $(2,4)$-cycle.}\}
\]
Let $t_B$ be given by
\[
t_B=  \frac{n_4}{n_2} K n \;\log n \log\log n  \le  n^{2/3}.
\]
Using \eqref{2lemmas}, conditional on $n_4 \le \om \log n$ and $\om \le \log \log n$,
for some $\a>0$ constant we have
\beq{no4}
\Pr(\cE(t_B))\leq \Th\brac{n_2^{2}n_4^{n_4+1}}e^{-\a\log n\log\log n}=o(1).
\eeq

\paragraph{Part 3.}
Let $t_2=dn(1-\d_2)$ be the red time reached by the edge-process
after the further $t_B$ blue steps made in Part 2 of the proof.
The precise value of $\d_2$ is unknown, but the vacant net $\wh\G(t_2)$
consists only of 2-cycles. The existence of a vertex of red degree 4
contradicts $\Pr(\cE(t_B))=o(1)$ in \eqref{no4}. Thus $\wh\G(t_2)$ is a random
2-regular graph. As $\wh\G(t_1)$ has $n_2+n_4=n_2 \ooi$ vertices of positive red degree, and $\wh\G(t_2)$
is a subgraph of $\wh\G(t_1)$, it also has at most this many vertices of red degree 2.
By the result for $\E C(i,i,0)=O(1/i)$ in Part 2,
 in expectation, $\wh\G(t_2)$ has $\E C(i)=O(1/i)$ cycles of length $i$.
Thus
\[
\Pr(\text{ There are more than } s^2 \E C(s) \text{ cycles size } s \text{
for any } s \ge \log n) \le \sum_{s \ge \log n} \frac{1}{s^2}=O\bfrac{1}{\log n}.
\]
Condition on the number of cycles size $s$ being at most $s^2 \E C(s)=O(s)$.
Using \eqref{2lemmas}, for some constant $\a>0$, the probability $P_s(t)$ that some cycle size $s$
remains unvisited after $t$ steps of the blue walk is
\[
P_s(t) = O\brac{s e^{-t \a \frac{s}{n}}}.
\]
Let $\cF$ be the event that some red cycle of size at least $\log n$
is unvisited after
\[
t_B=\frac{K}{\a}\frac{n}{\log n} \log \log n
\]
further blue steps. Thus for $K \ge 3$,
\begin{align*}
\Pr(\cF)&\leq \sum_{s \ge \log n} P_s(t_B)\\
&\leq o(1)+\sum_{s\geq \log n}s^3\exp\brac{-\frac{Ks\log\log n}{\log n}}\\
&=o(1).
\end{align*}
\end{proof}

\section{Acknowledgement}

Our particular thanks  to Gesine Reinert who suggested the problem of vacant nets to us, and who continued to encourage the
development of this paper.
We also thank the anonymous referees who, among other things, suggested we include the threshold results for the random walk which prefers unvisited edges.

\newpage

\section{Appendix} \label{Appx}

\subsection{Experimental results for the unvisited edge process}\label{expt}

\begin{figure*}[!htbp]
  \centering
 \includegraphics{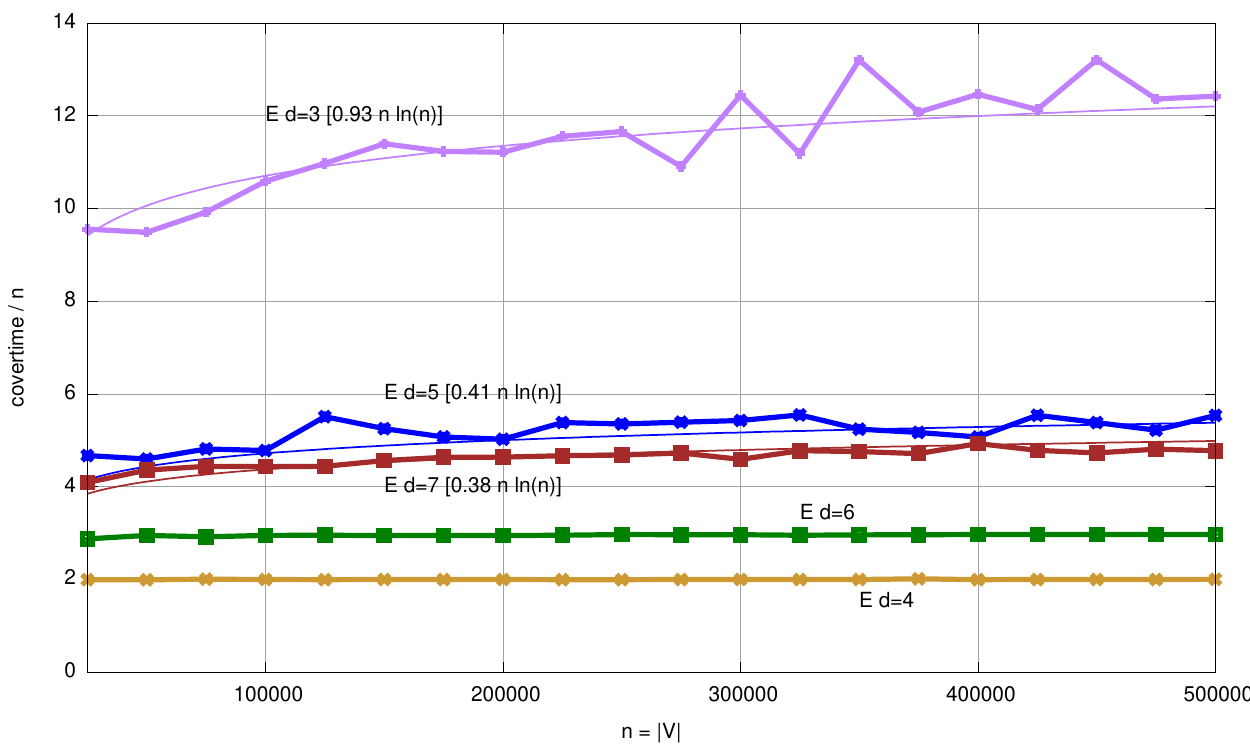}
   \caption{Vertex cover time of the unvisited edge process on $d$-regular graphs as function of  $n=|V|$. All cover times are normalized by dividing by the vertex set size $n$. The plot shows $d=3,4,5,6,7$. See \cite{BCF}  for  details.}
  \label{fig:covertime}
\end{figure*}

\subsection{Estimates of $R_v$ for nice vertices}\label{def-nice}
Recall the definition of $\cN$, the set of nice vertices of $G$ as given in Section \ref{props}.
For a nice vertex $v$, the following lemma relates the value of $R_v$ as given in \eqref{Rv}
to the probability of a first return to $v$
in the graph obtained by extending the subgraph $H$ of depth $\ell_1$
around  $v$ to an  infinite $r$--regular tree $\cT$ rooted at $v$.
Note that, we do not require the root $v$ of $\cT$ to have degree $r$.

\begin{lemma}\label{lemRv}\
Let $v$ be a  vertex  of degree $d(v) \ge 1$ whose subgraph $H$ to distance $\ell_1$
in a graph $G$ induces of a tree in which all vertices except $v$ have degree $r$. Then
\beq{Rvtree}
R_v=(1+\ole)\frac{1}{1-f}\qquad
\text{ where }f=\frac{1}{r-1},
\eeq
where  $f$ is the probability of a first return to $v$ in $\cT$,
the extension of $H$ to an infinite $r$-regular tree.
The $o(1)$ term in \eqref{Rvtree} is  $o(\log^{-K}n)$ for any positive constant $K$.
\end{lemma}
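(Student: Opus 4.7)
\textbf{Proof proposal for Lemma \ref{lemRv}.}

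The plan is to exploit the fact that on the tree-like neighbourhood $H$ of $v$, the walk in $G$ is indistinguishable from the walk in $\cT$, and to reduce the computation of $R_v$ to a classical return calculation on the infinite $r$-regular tree. First, I would compute $f$ for $\cT$: letting $p$ denote the probability that a walk starting at a non-root vertex $u$ ever reaches its parent in $\cT$, a one-step analysis yields $p = \frac{1}{r}+\frac{r-1}{r}p^2$, with transient solution $p = \frac{1}{r-1}$ (the other root $p=1$ is rejected since the walk on $\cT$ is transient for $r\ge 3$). From $v$ (of arbitrary degree $d(v)\ge 1$) the walk moves to a uniformly random neighbour and then returns with probability $p$, so $f = 1/(r-1)$ independently of $d(v)$, and $1/(1-f) = (r-1)/(r-2) = R^{\cT}(1)$.

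Next, I would couple the walks on $G$ and $\cT$. The crucial observation is that any closed walk of length $t$ from $v$ back to $v$ reaches distance at most $\lfloor t/2\rfloor$ from $v$. Therefore for every $t<2\ell_1$, such walks lie entirely within $B_{\ell_1}(v)$, which by hypothesis is a tree in which all non-root vertices have degree $r$ both in $G$ and in $\cT$. The transition probabilities along such closed walks agree, giving
\begin{equation*}
r_t^G \;=\; r_t^{\cT} \qquad \text{for all } t < 2\ell_1.
\end{equation*}
Splitting the defining sum \eqref{Rv}, this yields
\begin{equation*}
R_v \;=\; \sum_{t=0}^{2\ell_1-1} r_t^{\cT} \;+\; \sum_{t=2\ell_1}^{T-1} r_t^G
\;=\; R^{\cT}(1) \;-\; \sum_{t\ge 2\ell_1} r_t^{\cT} \;+\; \sum_{t=2\ell_1}^{T-1} r_t^G,
\end{equation*}
so it suffices to show that both tails are $o(\log^{-K}n)$.

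For the tree tail, the first-return generating function on $\cT$ admits the explicit form $F^{\cT}(z) = z\,\phi(z)$, where $\phi$ is the first-passage generating function of the biased distance-walk on $\mathbb{Z}_{\ge 0}$; solving the quadratic $\phi(z) = z(\tfrac{1}{r} + \tfrac{r-1}{r}\phi(z)^2)$ shows $F^{\cT}$ is analytic in $|z| < r/(2\sqrt{r-1})$, which is strictly larger than $1$ for $r\ge 3$. Since $F^{\cT}(1) = f < 1$, by continuity $R^{\cT}(z) = (1 - F^{\cT}(z))^{-1}$ is analytic in a disk of radius $\rho > 1$, so $r_t^{\cT} = O(\rho^{-t})$ and $\sum_{t\ge 2\ell_1} r_t^{\cT} = O(\rho^{-2\ell_1}) = n^{-\Omega(1)}$ by \eqref{ell1}. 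For the tail in $G$, I use the spectral bound following from \eqref{nice2} and \eqref{mix}:
\begin{equation*}
r_t^G \;=\; P_v^{t}(v) \;\le\; \pi_v + \lambda^{t} \;\le\; \tfrac{1}{n} + \lambda^{t},
\end{equation*}
which gives $\sum_{t=2\ell_1}^{T-1} r_t^G \le T/n + \lambda^{2\ell_1}/(1-\lambda) = O(\log n/n) + n^{-\Omega(1)}$, again using $\lambda\le 29/30$ and $\ell_1 = \e_1\log_r n$.

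Combining the three estimates yields $R_v = \frac{1}{1-f} + O(n^{-c})$ for some constant $c>0$, which is stronger than the claimed $\ooi$ error (in particular better than $\log^{-K} n$ for any $K$). The only non-trivial ingredient is establishing the exponential decay of $r_t^{\cT}$, i.e.\ that $R^{\cT}(z)$ is analytic strictly beyond the unit disk; this is the main obstacle but is handled cleanly via the explicit algebraic form of $\phi(z)$ together with $F^{\cT}(1) < 1$. Everything else is the coupling argument plus the spectral bound already supplied by \eqref{nice2}.
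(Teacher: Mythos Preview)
Your proposal is correct and follows essentially the same strategy as the paper: couple the walk on $G$ with the walk on the infinite tree $\cT$ inside the tree-like ball, split $R_v$ into a main term $1/(1-f)$ plus two tails, and bound the $G$-tail via the spectral estimate \eqref{mix} and the $\cT$-tail by exponential decay of $r_t^{\cT}$. The differences are purely cosmetic: the paper derives $f=1/(r-1)$ from the gambler's-ruin formula \eqref{absorb} rather than your quadratic recursion, and it bounds the tree tail by the explicit Catalan-type estimate \eqref{catalan} (giving decay $(4(r-1)/r^2)^{t/2}$) rather than invoking analyticity of $R^{\cT}(z)$ past $|z|=1$; both routes yield the same $n^{-\Omega(1)}$ bound.
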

\proofstart
Let $H$ denote the subgraph of $G$ induced by the set of vertices at distance at most $\ell_1$ from $v$.
This is a tree and we can embed it into an infinite $r$-regular
tree $\cT$ rooted at $v$. Let $W_v$ be the walk on $G$ starting from $v$, and
let $\cX$ be  the walk on $\cT$,
starting from $v$.

Let $X_0=0$, a let $X_t$ be the distance of $\cX$ from
the root vertex $v$ at step $t$.
Let $D_0=0$, and let $D_t$ be the distance from $v$ of $W$ in $G$ at step $t$.
Note that we can couple $W_v,\cX$ so that $D_t=X_t$ up until the
first time that $D_t>\ell_1$.

The values of $X_t$ are as follows: $X_0=0,\; X_1=1$, and
if $X_t=0$ then $X_{t+1}=1$. If $X_t >0$ then
\begin{equation}\label{pqs}
X_t = \left\{
\begin{array}{ll}
X_{t-1}- 1& \text{ with probability } q=\frac{1}{r}\\
X_{t-1}+1 & \text{ with probability } p=\frac{r-1}{r}.
\end{array}
\right.
\end{equation}
The following result (see e.g. \cite{Feller}) is for  a
random walk on the line $=\set{0,...,a}$ with absorbing states
$\set{0,a}$, and transition probabilities $q,p$  for moves left and
right respectively. Starting at vertex $z$, the
probability of absorption at the origin 0 is
\begin{equation}\label{absorb}
\r(z,a)= \frac{ (q/p)^z-(q/p)^a}{1-(q/p)^a}\le \bfrac{q}{p}^z,
\end{equation}
provided $q \le p$.

Let $U_\infty=\set{\exists t\ge 1: X_t=0}$, i.e. the event that the particle ever
returns to the root vertex in $\cT$.
It follows from \eqref{absorb} with $z=1$ and $a=\infty$ that
\beq{inf}
f=\Pr(U_\infty)=\frac{1}{r-1}.
\eeq
It follows that the expected number of visits by $\cX$ to $v$ is
$$
\frac{1}{1-f}.
$$
We write
$$R_v=\sum_{t=0}^Tr_t\text{ and }\r=\sum_{t=0}^\infty\r_t$$
where $\r_t=\Pr(X_t=v)$.
Now $r_t=\r_t$ for $t\leq \ell_1$ and part (a) follows {once we prove that}
\beq{tails}
\sum_{t=\ell_1+1}^Tr_t=\ole\text{ and }\sum_{t=\ell_1+1}^\infty\r_t=\ole.
\eeq
The first equation of \eqref{tails} follows from
\beq{feq}
\card{r_t-\frac{1}{n}}\leq \l_{\max}^t
\eeq
where $\l_{\max}$ is the second largest eigenvalue of the walk. This follows from \eqref{mix}.

The second equation of \eqref{tails} is proved in Lemma 7 of \cite{CFReg} where it is shown that
\beq{catalan}
\sum_{t=\ell_1+1}^\infty\r_t\leq \sum_{2j=\ell_1+1}^\infty\binom{2j}{j}\frac{(r-1)^j}{r^{2j}}
\leq \sum_{2j=\ell_1+1}^\infty\bfrac{4(r-1)}{r^2}^j.
\eeq
{Thus
$$R_v=\r+O(T\l_{\max}^{\ell_1}+T/n+(8/9)^{\ell_1})$$}.
\proofend
\\
{\bf Remark.}
We can  use the method of Lemma \ref{lemRv} to calculate $R_u$ for a vertex $u=\g(S)$
in a graph $H$ obtained from $G$ by contracting a finite set of vertices $S$
to a single vertex $u=\g(S)$,
either directly, or after subdividing
sets of edges incident with these vertices. We assume that all vertices in
$S$ have a unique neighbour $w$ in $N(S)$, and that $w$ is tree-like to depth $\ell=\ell_1$
in $G-S$. It follows that, in $H$,
\[
R_u= \ooi \frac{1}{1-f_u},
\]
where $f_u$ is the probability of first return to $u$ in the graph $\cT(S)$ obtained
by extending the $r$-regular trees rooted at vertices of $N(S)$ to infinity,
and then contracting $S$ to $u=\g(S)$.

\subsection{Mixing time of chain $\cM$}
\label{snotproof}

\begin{lemma}
For $G \in G_r$, $r \ge 3$ constant, w.h.p. $T_{\cM}=O(\log n)$.
\end{lemma}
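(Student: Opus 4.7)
The plan is to verify that $\cM$ is an irreducible, aperiodic Markov chain on the state space $U$ of directed edges of $G$, with uniform stationary distribution $\pi(\sigma)=1/(2m)=1/(rn)$, and then to control its mixing time via the spectrum of the non-backtracking matrix. Irreducibility follows from connectedness of $G$ together with $r\geq 3$, which guarantees a non-backtracking path from any directed edge to any other. Aperiodicity follows from the w.h.p. existence of small cycles of both parities in $G_r$, established by the same type of first-moment calculation as in Section~\ref{props}. The uniform stationary measure is immediate since $P_\cM$ is doubly stochastic: every state has both in-degree and out-degree $r-1$ after normalising.

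Next, I would bound the spectrum of $P_\cM$ via the classical Ihara-type identity relating the non-backtracking matrix $B=(r-1)P_\cM$ of a regular graph to its adjacency matrix $A$. For an $r$-regular graph, every eigenvalue $\mu$ of $B$, apart from a bank of trivial $\pm 1$ eigenvalues of total multiplicity $m-n$, satisfies the quadratic relation $\mu^2-\lambda\mu+(r-1)=0$ for some eigenvalue $\lambda$ of $A$; see \cite{Alon}. By assumption \eqref{nice2}, every non-trivial eigenvalue $\lambda$ of $A$ satisfies $|\lambda|\leq 2\sqrt{r-1}+\varepsilon$ w.h.p., so the discriminant $\lambda^2-4(r-1)$ is non-positive up to an $o(1)$ correction and the roots satisfy $|\mu|^2\leq (1+o(1))(r-1)$. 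Dividing by $r-1$, every non-trivial eigenvalue of $P_\cM$ has modulus at most $\rho=(1+o(1))/\sqrt{r-1}$, which is bounded strictly below $1$ for every $r\geq 3$.

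The final step is to convert this spectral bound into the pointwise convergence required by \eqref{4}. This is the main technical obstacle: $P_\cM$ is non-reversible, so the standard reversible spectral-gap machinery does not apply directly. The cleanest route is to appeal to Alon, Benjamini, Lubetzky and Sodin \cite{Alon}, who prove that on any $r$-regular graph whose adjacency matrix has spectral gap bounded away from zero by a constant, the total-variation mixing time of the non-backtracking walk on directed edges is $O(\log n)$. Converting from total variation to the $\ell_\infty$ form \eqref{4} costs only a constant factor in the mixing time, because $\pi$ is uniform and $\max_\sigma\pi(\sigma)/\min_\sigma\pi(\sigma)=1$; running the chain for a constant multiple of its TV mixing time yields
\begin{equation*}
\max_{\sigma,\tau}|P^{t}_\cM(\sigma,\tau)-\pi(\tau)|\leq 1/n^4
\end{equation*}
at $t=K\log n$ for $K$ sufficiently large, which is precisely the statement $T_\cM\le K\log n$. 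A self-contained alternative would use the Ihara factorisation to bound the sizes of Jordan blocks of $B$ by $2$, giving $\|P_\cM^t-\Pi\|_2\le C t\rho^t$ and then upgrading to $\ell_\infty$ by paying a polynomial factor in $n$; however this requires care with the non-normality of $P_\cM$, and citing \cite{Alon} bypasses the difficulty entirely.
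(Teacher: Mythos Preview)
Your approach is correct but genuinely different from the paper's. The paper does not use the Ihara--Bass spectral connection at all; instead it invokes a conductance bound of Mihail \cite{MM} for lazy walks on regular digraphs, namely
\[
|P_e^{t}(f)-\pi_f|\le (1-\alpha^2)^{t/2},
\]
and then proves by a direct combinatorial case analysis on subsets $B$ of directed edges (Lemma~\ref{fruitsbasket}) that the edge-conductance $\alpha$ of the digraph $M$ is at least $\e/4r$ for some constant $\e>0$ w.h.p. The case analysis partitions vertices of $G$ according to their $B$-in-degree and $B$-out-degree and uses the ordinary vertex-conductance of $G$ only in one of four cases.

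Your route via the quadratic relation $\mu^2-\lambda\mu+(r-1)=0$ and Friedman's bound is sharper: it gives spectral radius $(1+o(1))/\sqrt{r-1}$ for $P_\cM$, whereas the conductance argument only yields a crude constant gap. The cost is that $P_\cM$ is non-normal, so the spectrum does not immediately control mixing; your Jordan-block observation (blocks of size at most $2$ from the quadratic) is the right fix and gives the $O(t\rho^t)$ decay you need. Citing \cite{Alon} for the conclusion is reasonable, though note that the present paper explicitly remarks that the analysis in \cite{Alon} is phrased on $G$ rather than on $\cM$, which is precisely why the authors supply their own conductance proof; your self-contained Jordan-block alternative is therefore the safer option. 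One small point: your conversion from total-variation to the $\ell_\infty$ condition \eqref{4} really relies on geometric decay (which you have from the spectrum), not merely on $\pi$ being uniform; the bare TV mixing-time bound plus submultiplicativity would only give $O(\log^2 n)$.
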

\begin{proof}
Mihail \cite{MM} gives the following conductance based measure of convergence for a strongly aperiodic walk with transition matrix $P$ on a  $d$-regular digraph $D=(V,A)$. For vertices $e,f \in V$,
\beq{Mi}
|P_e^{t}(f)-\pi_f| \le (1-\a^2)^{t/2}.
\eeq
Here,
\[
\a= \frac{1}{2d} \min_{|B| \le |V|/2} \frac{|C(B)|}{|B|},
\]
and $B \seq V$ and $C(B)=\{a \in A: a=(e,f), e \in B, f \in \ol B\}$. The proof in \cite{MM} assumes the walk is lazy (i.e. for our model the non-backtracking walk on the underlying graph is lazy).

In Lemma \ref{fruitsbasket} (below) we prove  there is an $\e>0$ constant such that w.h.p.
$\a \ge \e/4r$. The result that $T_{\cM}=O(\log n)$ follows from using this in \eqref{Mi}.
\end{proof}

\begin{lemma}\label{fruitsbasket}
For $G \in G_r$, there is an $\e>0$ constant such that w.h.p.
$\a \ge \e/4r$.
\end{lemma}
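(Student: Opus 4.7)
The plan is to show that for every $B\subseteq V(M)$ with $|B|\le rn/2$ we have $|C(B)|\ge c|B|$ for a constant $c=c(r)>0$, which yields $\alpha\ge c/(2(r-1))$ and hence $\alpha\ge\epsilon/(4r)$ for suitable $\epsilon$. The starting point is a cut identity obtained by counting, for each $(u,v)\in B$, the number of out-neighbours $(v,w)$ with $w\ne u$ that lie in $B$: writing $a_v=|I_v\cap B|$, $b_v=|O_v\cap B|$, and $D$ for the number of ordered state-pairs in $B$ corresponding to the two orientations of a common edge of $G$, one gets $|C(B)|=(r-1)|B|-\sum_v a_v b_v+D$.

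I would next reduce to a canonical \emph{saturated} form. Define $\hat S\subseteq V(G)$ as the smallest vertex set containing $\{v:a_v=b_v=r\}$ and closed under the operation ``add $v$ whenever $N_G(v)\subseteq\hat S$''. Let $B'=\bigcup_{v\in\hat S}(I_v\cup O_v)\subseteq B$; then $|B'|=r|\hat S|+|E_G(\hat S,\overline{\hat S})|$ and $|\hat S|\le n/2$ (since $r|\hat S|\le|B'|\le|B|\le rn/2$). A direct computation shows $|C(B')|=\sum_{v\notin\hat S}k_v(r-k_v)$ with $k_v:=|N_G(v)\cap\hat S|\le r-1$ (the closure step excludes $k_v=r$). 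Property \eqref{nice2} combined with Cheeger's inequality gives $|E_G(\hat S,\overline{\hat S})|\ge\delta r|\hat S|$ for a universal $\delta>0$, so the fringe set $T:=\{v\notin\hat S:1\le k_v\le r-1\}$ has $|T|\ge|E_G(\hat S,\overline{\hat S})|/(r-1)\ge\delta|\hat S|$, and each $v\in T$ contributes $k_v(r-k_v)\ge r-1$ to $|C(B')|$, yielding $|C(B')|\ge\delta(r-1)|\hat S|\ge(\delta/3)|B'|$ when $r\ge 3$.

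To extend from $B'$ to an arbitrary $B$, I would analyse contributions from states in $B\setminus B'$, which are necessarily incident to partially covered vertices $v\notin\hat S$ (satisfying $a_v<r$ or $b_v<r$). Viewing $M$ through its symmetrisation $X$ (which is $(2r-2)$-regular on $rn$ vertices, with $|E_X(B,\overline B)|=2|C(B)|$), the induced subgraph of $X$ on $I_v\cup O_v$ is $K_{r,r}$ minus a perfect matching, so the local contribution to $|E_X(B,\overline B)|$ at such $v$ equals $(r-1)(a_v+b_v)-2a_v b_v+2d_v$, which is strictly positive and at least of order $\min(a_v,r-a_v)+\min(b_v,r-b_v)$ by elementary bipartite cut bounds. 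Summing, these contributions provide an additional linear-in-$|B\setminus B'|$ lower bound, which combined with the core bound completes the proof.

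The main obstacle is the regime where $B$ is essentially $B'$ and the internal $K_{r,r}$-minus-matching cuts at vertices of $\hat S$ all vanish, so the entire cut must come from fringe vertices. Without the closure step, a vertex $v\notin\hat S$ with $N_G(v)\subseteq\hat S$ would absorb boundary edges of $\hat S$ in $G$ while contributing $k_v(r-k_v)=0$ to $|C(B)|$, which would sink the argument. The closure step is engineered precisely to guarantee $k_v\le r-1$ outside $\hat S$; only then does the edge-boundary bound from Friedman plus Cheeger translate into a linear lower bound on $|C(B)|$.
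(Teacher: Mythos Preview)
Your approach is genuinely different from the paper's. The paper proceeds by a four-way case split on the sizes of the sets $W_0=\{w:d_B^+(w)=r-1,\,d_B^-(w)=1\}$ and $W_{1,s}=\{w:d_B^+(w)=r,\,d_B^-(w)=s\}$, invoking the edge-expansion of $G$ only in the case where $W_1=\bigcup_s W_{1,s}$ is large; your plan instead uses the local decomposition $2|C(B)|=\sum_v L_v$ with $L_v=(r-1)(a_v+b_v)-2a_vb_v+2d_v$ together with a closure construction $\hat S$. Your first half (the analysis of $B'$) is correct and elegant: the closure forces $k_v\le r-1$ for $v\notin\hat S$, so the Cheeger bound on $E_G(\hat S,\overline{\hat S})$ really does translate into $|C(B')|\gtrsim|\hat S|$.

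There is, however, a genuine gap in your second half. You write that the residual contributions ``combined with the core bound'' finish the proof, but $|C(B)|$ is \emph{not} bounded below by $|C(B')|$: enlarging $B'$ to $B$ can absorb former cut arcs, so you cannot simply add a $|B\setminus B'|$ term to the $|C(B')|$ bound. Moreover, your claimed pointwise lower bound $L_v\gtrsim\min(a_v,r-a_v)+\min(b_v,r-b_v)$ vanishes when $(a_v,b_v)=(r,0)$ or $(0,r)$, so it cannot by itself yield something linear in $|B\setminus B'|$.

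The fix is to abandon $|C(B')|$ and work directly with $2|C(B)|=\sum_{v\notin\hat S}L_v$ (you already observed $L_v=0$ for $v\in\hat S$). Since the local graph $K_{r,r}$ minus a matching is connected with minimum degree $r-1$, one has $L_v\ge r-1$ whenever $0<a_v+b_v<2r$; and for $v\notin\hat S$ the case $a_v+b_v=2r$ is excluded. Let $N$ be the number of such $v$. Then $2|C(B)|\ge(r-1)N$, while $|B|\le r|\hat S|+rN$. Your fringe bound $|T|\ge\delta'|\hat S|$ (with $T\subseteq\{v\notin\hat S:a_v+b_v>0\}$) gives $N\ge\delta'|\hat S|$, whence $|B|\le rN(1+1/\delta')$ and $|C(B)|\ge c|B|$ follows.
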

\begin{proof}
For our chain $\cM$, $d=r-1$, and $|V_M|=rn$ where $V_M$ is the set of oriented arcs of the
underlying graph $G$. Suppose that $B \seq V_M$ is a set of vertices of $\cM$ (directed arcs of $G$).
Let $R=\ol B=V_M-B$ denote the rest of the arcs.
 Thus $|B|+|R|=rn$. Assume that $|R|\geq |B|$.
 We need to estimate $C(B)$. For a vertex $v \in V(G)$, let $d_R^+(v)$ etc. be the $R$-out-degree of $v$ (i.e. $d^+_R(v)=|\{\{v,w\} \in E(G):(v,w) \in R\}|$). Next let
\begin{align*}
W_0&=\set{w:d^+_B(w)=r-1\text{ and }d^-_B(w)=1},\\
W_{1,s}&=\set{w:d^+_B(w)=r,\,d^-_B(w)=s}\text{ and }W_1=\bigcup_{s=0}^rW_{1,s}.
\end{align*}
If $(v,w) \in B$ and  $w \not \in W_0 \cup W_1$ there is always an edge $(w,x)$, $x \ne v$ such that $(w,x) \in R$. If $e=(v,w)\in B$ and $f=(w,x)\in R, \; x\neq v$ then the transition from $e$ to $f$ is non-backtracking, and arc $(e,f)$ contributes to $C(B)$. We can bound $|C(B)|$ from below by
\[
|C(B)| \ge \sum_{(v,w)\in B}\brac{1-1_{w\in W_0\cup W_1}}.
\]
Enumerating $W_0\cup W_1$ by in-degree gives
\begin{align}
\sum_{(v,w)\in B}\brac{1-1_{w\in W_0\cup W_1}}&=|B|-|W_0|-\sum_{s=0}^r\sum_{w\in W_{1,s}}d_B^-(w)\nonumber\\
&=|B|-|W_0|-\sum_{s=0}^rs|W_{1,s}|\label{eq1}.
\end{align}
Enumerating $B$ by initial and terminal vertices gives
$$\sum_{s=0}^r(r+s)|W_{1,s}|+r|W_0|\leq 2|B|.$$
So,
\begin{align*}
\sum_{(v,w)\in B}\brac{1-1_{w\in W_0\cup W_1}}&\geq \frac12\sum_{s=0}^r(r+s)|W_{1,s}|+\frac{r}{2}|W_0|-|W_0|-\sum_{s=0}^rs|W_{1,s}|\\
&=\sum_{s=0}^r\brac{\frac{r}{2}-\frac{s}{2}}|W_{1,s}|+\brac{\frac{r}{2}-1}|W_0|.
\end{align*}
{\bf Case 1:} $\exists \;  0\leq s<r$ such that $|W_{1,s}|\geq \e |B|$ or $|W_0|\geq \e |B|$.

In this case,
$$\sum_{(v,w)\in B}\brac{1-1_{w\in W_0\cup W_1}}\geq \frac{\e}{2}|B|.$$
{\bf Case 2:} $|W_{1,s}|< \e |B|,\,\forall 0\leq s<r$ and $|W_0|< \e |B|$ and $|W_{1,r}|\leq r^{-1}\brac{1-\frac{r^2}{2}\e}|B|$.

Going back to \eqref{eq1} we get
$$\sum_{(v,w)\in B}\brac{1-1_{w\in W_0\cup W_1}}\geq |B|\brac{1-\e-\frac{r(r-1)}{2}\e-\brac{1-\frac{r^2}{2}\e}}=\frac{\e}{2}|B|.$$

{\bf Case 3:} $|W_0|< \e |B|$, $|W_{1}|> r^{-1}\brac{1-\frac{r^2}{2}\e}|B|$ and $|W_{1}|\leq \frac34n$.

Let $e(W_1, \ol W_1)$ be the number of edges between $W_1$ and $\ol W_1$ in the underlying graph $G$, and let $\F=\F(G)$ be the conductance of $G$. Thus
\[
e(W_1, \ol W_1) \ge \min(|W_1|, | \ol W_1|) \; r \F.
\]
If $u \in W_1$,  and $\{u,v\}$ is an edge of $G$, then by definition of $W_1$, the arc $(u,v) \in B$. Thus if $v \in \ol W_1$, and $v \not \in W_0$, there is some $z \in V$, $z \ne u$ such that $(v,z) \in R$. Let $A$ be the set of such good arcs $(v,z)$, then
\[
|C(B)|\geq |A| \ge e(W_1, \ol W_1)- |W_0|.
\]
If $|W_1|\le n/2$,
\[
|A| \; \ge \; |W_1| r \F-|W_0| \;\ge \;((1-r^2\e/2)\F-\e)|B|.
\]
If $n/2 \le |W_1| \le 3n/4$, and $|B| \le rn/2$,
\[
|A| \; \ge \; | \ol W_1| r \F-|W_0| \;\ge\; \frac{n}{4}r\F-\e |B|\;\ge\; (\F /2-\e )|B|.
\]
In either case, for  $r^2\e<1$,  $|C(B)| \ge |B|(\F /2-\e )$.

{\bf Case 4:}
$|W_{1}|> \frac34n$.

If $|B| \le rn/2$, this  is impossible since we have $|B|\geq r|W_{1}|>\frac{3}4 rn$.
\end{proof}

\end{document}